\numberwithin{equation}{section}
\newsavebox{\measure@tikzpicture}
  \def\tikz@width{#1}%
\newcommand*\indic[1]{\mathbbm{1}_{\{ #1 \}}}
\newcommand*\indica[1]{\mathbbm{1}_{ #1 }}
\def\cA{{\mathcal A}}
\def\cB{{\mathcal B}}
\def\cF{{\mathcal F}}
\def\cL{{\mathcal L}}
\def\cP{{\mathcal P}}
\def\E{\mathbb{E}}
\def\N{\mathbb{N}}
\def\Q{\mathbb{Q}}
\def\P{\mathbb{P}}
\def\R{\mathbb{R}}
\def\d{\mathrm{d}}
\providecommand{\abs}[1]{\lvert#1\rvert}
\providecommand{\norm}[1]{\lVert#1\rVert}
\theoremstyle{plain}
\newtheorem{theorem}{Theorem}[section]
\newtheorem{lemma}[theorem]{Lemma}
\newtheorem{corollary}[theorem]{Corollary}
\newtheorem{proposition}[theorem]{Proposition}
\newtheorem{assumption}[theorem]{Assumption}
\newtheorem{definition}[theorem]{Definition}
\newtheorem{remark}[theorem]{Remark}
\date{\vspace{-1em}\normalsize{\today}}
\title{Optimal control under unknown intensity with Bayesian learning}
\author{Nicolas Baradel\thanks{Inria, CMAP, CNRS, \'Ecole polytechnique, Institut Polytechnique de Paris, 91120 Palaiseau, France.} 
\and Quentin Cormier\footnotemark[1]}
\date{\today}
\begin{document}
\maketitle
\vspace{5pt}

\abstract{
We investigate an optimal control problem motivated by neuroscience, where the dynamics is driven by a Poisson process with a controlled stochastic intensity and an unknown parameter. 
 Given a prior distribution for the unknown parameter, we describe its evolution using Bayes' rule. We reformulate the optimization problem by applying Girsanov's theorem and establish a dynamic programming principle. Finally, we characterize the value function as the unique viscosity solution to a finite-dimensional Hamilton-Jacobi-Bellman equation, which can be solved numerically.
}
\vspace{10pt}
\smallskip\newline
\noindent\textbf{Keywords}: Optimal control, Bayesian filtering, Point processes.
\vspace{10pt}
\smallskip\newline
\noindent\textbf{Mathematics Subject Classification}  62M20, 49L20, 49L25.
\vspace{10pt}

\section{Introduction}

Consider the following stochastic integrate-and-fire neuron model \cite{eva1, eva2, CTV}, where $(Y_t)$ represents the membrane potential of a neuron.
The neuron emits ``spikes'' randomly at rate $f_\Lambda(Y_t)$. When a spike occurs, the potential resets to zero. Between the spikes, the dynamics is given by the ordinary differential equation
\[ \dot{Y_t} = b(Y_t) + \gamma_t. \]
In this model, the functions $(\lambda, y) \mapsto f_\lambda(y) \in \R_+$ and $y \mapsto b(y) \in \R$ are deterministic and known. 
The parameter $\Lambda$ is a random variable with a known distribution $\mu$, supported on $\R_+$, but its realization is unknown.
Our objective is to estimate the value of $\Lambda$ as accurately as possible. The input current $(\gamma_t)$ serves as a control and can be chosen to improve the estimation.
Crucially, we only observe the spike times of $(Y_t)$, and thus $(\gamma_t)$ must depend causally on the spike times up to time $t$. The central question addressed in this article is:
\begin{center}
	\emph{How should the stochastic control $(\gamma_t)$ be chosen to optimally estimate the unknown parameter?}
\end{center}
This leads us to formulate an optimal control problem driven by a counting process with unknown intensity. Our approach extends beyond neuroscience to applications requiring optimal control of processes with unknown parameters, such as in online learning scenarios. We present a mathematical framework to address this class of problems.

\medbreak

Such problems have been extensively studied in the discrete-time stochastic optimal control literature (see, e.g., \cite{easley1988controlling, hernandez2012adaptive}). A key approach involves assigning a prior distribution to the unknown parameter, which is updated at each time step using Bayes' rule. This Bayesian framework is central to filtering theory; see \cite{crisan} for a historical account of filtering theory and \cite{MR3075395} for an overview of classical filtering techniques. In this work, we combine insights from filtering theory and stochastic control.

In the continuous-time setting, particularly within the impulse control framework, these issues have been addressed by \cite{baradel2018optimal} in a Brownian framework and extended by \cite{baradel2024optimal} to a Poisson framework. As in the discrete-time setting, a Bayesian approach is employed: the unknown parameter is sampled from a \emph{prior distribution}, and a stochastic process in the space of probability measures, known as the \emph{posterior distribution}, is introduced. This approach leads to a dynamic programming principle and a characterization of the value function via a quasi-variational parabolic equation, interpreted in the sense of viscosity solutions.

\medbreak
In \cite{lions2016} (lectures given at \emph{Coll\`{e}ge de France}) and \cite{bismuth2019portfolio} (with applications to asset management), the authors consider instances of the following diffusive model:
\[ d X_t = \Lambda b(X_t, \gamma_t) \d t + \sigma(X_t) \d W_t. \]
In this model, the process $(X_t)$ is observed, but the parameter $\Lambda$ is unknown. A key idea introduced in these works is to reformulate the problem using Girsanov's theorem. This approach eliminates the drift and explicitly accounts for the dependence on the unknown parameter through the exponential martingale in Girsanov's transformation. Furthermore, in cases where $\Lambda$ appears linearly or for specific classes of prior distributions known as conjugate families, the authors derive a finite-dimensional Hamilton-Jacobi-Bellman (HJB) equation satisfied by the value function.

A specificity of our setting is the following: the conditional distribution of the unknown parameter evolves continuously and exhibits jumps; these jumps in the conditional distribution occur precisely at the spike times of the underlying point process, with an intensity that depends on the unknown parameter. 
There is an extensive literature on filtering: in our case, the filtering problem without control can be solved by standard methods. 

The main difficulty lies in the coupling between the stochastic control problem and the filtering problem. Indeed, modifying the control $(\gamma_t)$ alters the spike times of the process. 
In addition, since the controller observes only the spike times, a lot of care is required to define the admissible controls. In particular, it is generally not possible to use the same control $(\gamma_t)$ for two different initial conditions $y$ and $y'$ of $(Y_t)$. Indeed, changing the initial condition may alter the spike times, rendering a control valid for $y$ invalid for $y'$. This makes particularly challenging to obtain a priori regularity estimates for the value function. As in \cite{lions2016, bismuth2019portfolio}, Girsanov's theorem plays a crucial role in addressing these difficulties. 

Another challenge stems from the infinite-dimensional nature of the measure space, where the posterior distribution evolves continuously over time. In cases where the intensity of the underlying point process depends linearly on the unknown parameter, we demonstrate that the posterior distribution is confined to a two-dimensional subspace of $\mathbb{R}^{2}$. Furthermore, we establish that the value function associated with the optimal control problem is the unique viscosity solution to a specific finite-dimensional Hamilton-Jacobi-Bellman (HJB) equation.

\medbreak

We now briefly outline our main contributions and the organization of this paper.

The control problem is introduced in Section~\ref{sec:def-problem}, where we characterize the admissible controls and demonstrate that the optimization problem is independent of the initial probability space. In Section~\ref{sec:girsanov-reformulation}, we reformulate the problem using Girsanov's theorem for point processes with stochastic intensity and establish the equivalence between the original and reformulated problems. In Section~\ref{sec:the-value-function}, we introduce the value function, and in Section~\ref{sec:reduction-finite-dim}, we demonstrate how to obtain a finite-dimensional representation of the value function when the function $f_\lambda(y)$ is linear in $\lambda$, i.e. $f_\lambda(y) = \lambda g(y)$. This result is analogous to the finite-dimensional reduction obtained in \cite{lions2016, bismuth2019portfolio} for the diffusive setting with linear drift. Thereafter, we focus our analysis on this finite-dimensional problem.

In Section~\ref{sec:space-reg}, we prove that the value function is Lipschitz continuous with respect to the spatial variables. To this end, we conduct a detailed analysis of the exponential martingale involved in Girsanov's transformation. The key difficulty is that the admissible controls are not assume to be bounded. Using these regularity estimates, we establish a dynamic programming principle with deterministic stopping times in Section~\ref{sec:PPD-deterministic-times}. Leveraging this result, we demonstrate in Section~\ref{sec:time-regularity} that the value function is continuous with respect to time. In Section~\ref{sec:PPD}, we provide a general dynamic programming principle that accommodates arbitrary stopping times.

In Section~\ref{sec:visc}, we show that the value function is the unique viscosity solution to a specific Hamilton-Jacobi-Bellman (HJB) equation, given by equation~\eqref{eq:HJB-reformulated}. This first-order HJB equation includes zero-order terms arising from the jumps in the underlying dynamics. We adopt the methodology of \cite{MR861089} to address these terms. A particular challenge in proving the comparison result arises due to the unbounded nature of both the controls and the spatial variables.

Finally, in Section~\ref{sec:examples}, we present several numerical illustrations and discuss the behavior of the optimal control.

\section{The control problem}
\label{sec:control-problem}
\subsection{The optimization problem}
\label{sec:def-problem}
Let $\mu \in \cP_2(\R_+)$ be a probability measure supported on $\R_+$ with a finite second moment.
Consider functions $b$ and $f$ satisfying the following assumption:
\begin{assumption}
	\label{ass:initial-ass}
	The function $b: \R \rightarrow \R$ is Lipschitz, and $f: \R_+ \times \R \rightarrow \R_+$ is continuous and bounded for each $\lambda \geq 0$, with quadratic growth in $\lambda$:
	\[ \exists C > 0, \quad \sup_{y \in \R} f_\lambda(y) < C(1 + \lambda^2). \]
\end{assumption}

Let $(\Omega, \cF, (\cF_t), \P)$ be a filtered probability space satisfying the usual conditions.
Let $\Pi(\d u, \d z)$ be an $(\cF_t)$-Poisson measure with intensity given by the Lebesgue measure $\d u \d z$ on $\R_+ \times \R_+$. Let $\Lambda$ be an $\cF_0$-measurable random variable with probability distribution $\mu$.
Fix a time horizon $T > 0$. Let $(\gamma_{t})_{t \in [0, T]}$ be an $(\cF_t)$-progressively measurable process such that
\[ \P(\d \omega) a.s, \quad \gamma \in L^2([0, T]). \]
We consider the following stochastic differential equation for all $t \in [0, T]$:
\begin{align*}
	Y_t &= y + \int_0^t (b(Y_u) + \gamma_u)\d u - \int_0^t \int_{\R_+} Y_{u-} \indic{z \leq f_\Lambda(Y_{u-})}  \Pi(\d u, \d z). 
\end{align*}
This stochastic differential equation, driven by the Poisson measure $\Pi$, admits a unique pathwise solution.  Indeed, it suffices to solve the equation between consecutive jumps; see \cite[Ch. IV, 9]{MR637061}. We define:
\begin{equation}
	\label{eq:def-of-Nt}
N_t = \int_0^t \int_{\R_+} \indic{z \leq f_\Lambda(Y_{u-})} \Pi(\d u, \d z), \end{equation}
as the number of events between $0$ and $t$, and denote its canonical filtration by:
\[ \cF^{N}_t = \sigma \{ N_s, ~s\leq t \}. \]
\begin{definition}
	A control $\gamma$ is admissible if it is predictable with respect to the filtration $(\cF^N_t)_{t \geq 0}$.
\end{definition}
We denote by $\cA$ the set of admissible controls. 
The posterior distribution of $\Lambda$ given the observed spike times is defined as:
\[ M^{\gamma}_t := \cL(\Lambda ~|~ \cF^{N}_t). \]
Let $\kappa > 0$. The optimization problem is formulated as follows:
\begin{equation}  
	\label{eq:original-value}
	\inf_{\gamma \in \cA} \E \left[  \frac{1}{2}\int_0^T { \gamma^2_s \d s} + \kappa \text{Var}(M^{\gamma}_T) \right].  
\end{equation}
In other words, our goal is to minimize the variance of the posterior distribution, with the quadratic cost on the control serving as a regularization to prevent excessively large controls. A key difficulty in this formulation is that the set of admissible controls is not known a priori. Indeed, we require the control $\gamma$ to be predictable with respect to $(\cF^N_t)$, where $N_t$ is itself determined by solving a stochastic differential equation driven by a Poisson measure and $\gamma$.
In the next section, we provide a more explicit characterization of the admissible controls.
\subsection{Admissible controls and structure of the optimization problem}
We now reformulate the optimization problem to demonstrate that the value function defined by~\eqref{eq:original-value} is independent of the probability space $(\Omega, \cF, (\cF_t), \P)$.
Let $(\tau_k)_{k \geq 1}$ denote the successive jump times of $(N_t)_{t \geq 0}$, with the convention that $\tau_0 = 0$.
The following proposition characterizes the admissible controls. 
\begin{proposition}
	\label{prop:caracterisation-progressible-processes}
Let $\gamma$ be a process such that  $\P$ a.s., $\gamma \in L^2([0, T])$.
In order for $\gamma$ to be $(\cF^N_t)$-predictable, it is necessary and sufficient that it admits the representation
\[  \gamma_t = \sum_{k \geq 0} \Gamma_{k}(t) \indica{(\tau_{k}, \tau_{k+1}]}(t), \]
where for all $k \in \N$, the mapping $(\omega, t) \mapsto \Gamma_k(\omega)(t)$ is $\cF^N_{\tau_k} \otimes \cB([0, T]) $-measurable, where $\cB([0, T])$ denotes the Borelian $\sigma$-field of $[0, T]$.
\end{proposition}
\begin{proof} See \cite[A2, Th. 34]{BP}.
 \end{proof}
 As $L^2([0, T])$ is a separable Hilbert space, it holds that the application $\omega \mapsto \Gamma_k(\omega)$ which maps $\Omega$ to $L^2([0, T])$ is
 measurable from $\cF^N_{\tau_k}$ to $\cB(L^2([0, T]))$.
 Consider $D([0, T])$, the Skorokhod space of c\`{a}dl\`{a}g functions on $[0, T]$. By Doob's lemma~\cite[Lemma~1.13]{Kallenberg}, there exists a deterministic and measurable function $\Psi_k: D([0, T]) \rightarrow L^2([0, T])$ such that:
\[ \Gamma_k = \Psi_k(N_{\cdot \wedge \tau_k}).  \]
Here, we use the notation $a \wedge b := \min(a, b)$.
This establishes a one-to-one correspondence between an admissible control $\gamma$
and a collection of measurable functions $(\Psi_k)_{k \geq 0}$. 
We denote by $\tilde{A}$ the set of functions $(\Psi_k)_{k \geq 0}$, such that each $\Psi_k$ is a measurable function from $D([0, T])$ to $L^2([0,T])$.
\begin{lemma}
	\label{lem:triple-L-N-Y}
	Let $\Psi \in \tilde{A}$. Let $\Lambda$ be a $\cF_0$-measurable random variable with probability distribution $\mu$. 
	Then the following equation has a unique pathwise solution:
	\begin{align*}	
	Y_t &= y + \int_0^t (b(Y_u) + \gamma_u)\d u - \int_0^t \int_{\R_+} Y_{u-} \indic{z \leq f_\Lambda(Y_u-)}  \Pi(\d u, \d z), \\
		N_t &= \int_0^t \int_{\R_+} \indic{z \leq f_\Lambda(Y_{u-})} \Pi(\d u, \d z), \\
		\gamma_t &= \sum_{k \geq 0} \Psi_k(N_{\cdot \wedge \tau_k})(t) \indica{(\tau_k, \tau_{k+1}]}(t). 
	\end{align*}
	These equations characterize the law of $(N_t)_{t \in [0, T]}$. We write:
	\[ Q_{\Psi, \mu} := \cL((N_t)_{t \in [0, T]}). \]
\end{lemma}
\begin{proof}
	The main idea of the proof is that the equation can be solved recursively from each jump instant $\tau_k$ to the next jump instant $\tau_{k+1}$ and the equation reduces to the ODE
	\[ Y_{t \wedge \tau_{k+1}} = Y_{\tau_k} + \int_{\tau_k}^{t \wedge \tau_{k+1}} [b(Y_u) + \Gamma_k(u)] \d u, \]
	where $\Gamma_k(u) = \Psi_k(N_{\cdot \wedge \tau_k})(u)$.
	In addition, 
	\[ \E \left[N_t\right] = \int_0^t \E \left[f_\Lambda(Y_s)\right] \d s  \leq C T\left( 1 + \E\left[ \Lambda^2\right]\right) < \infty, \]
	therefore the jump instants do not accumulate.
	This implies strong existence, pathwise uniqueness, and uniqueness in law. 
	For a complete proof in a much more general setting, see \cite[Ch. IV, Section 9]{MR637061}.
\end{proof}
Similarly, since $\text{Var}(M^\gamma_T) = \E \left[ \Lambda^2 ~|~ \cF^N_T \right] - \E^2 \left[ \Lambda~|~\cF^N_T \right]$ is $\cF^N_T$-measurable, by Doob's lemma, there exists a deterministic and measurable function $V: D([0, T]) \rightarrow \R$ such that:
\[ \P(\d \omega)~a.s., \quad \text{Var}(M^\gamma_T) = \E[\Lambda^2 ~|~ \cF^N_T] - \E^2[ \Lambda~|~\cF^N_T ] = V(N). \]
\begin{remark}
	In Corollary~\ref{cor:aposteriori-dist} below, we show that the function $V$ admits an explicit expression.
\end{remark}
Thus, our optimization problem \eqref{eq:original-value} can be expressed as:
\begin{align*} & \inf_{\Psi \in \tilde{A}} \E \left[  \sum_{k \geq 0} \frac{1}{2}\int_0^T { \Psi^2_{k}(N_{\cdot \wedge \tau_k})(s) \indica{(\tau_k, \tau_{k+1}]}(s) \d s} + \kappa V(N) \right] \\
	& =  \inf_{\Psi \in \tilde{A}} \int \left[  \sum_{k \geq 0} \frac{1}{2}\int_0^T { \Psi^2_{k}(\tilde{n}_{\cdot \wedge \tilde{\tau}_k})(s) \indica{(\tilde{\tau}_k, \tilde{\tau}_{k+1}]}(s) \d s} + \kappa V(\tilde{n}) \right] Q_{\Psi, \mu}(\d \tilde{n}). 
\end{align*}
Here, $\tilde{\tau}_k$ denotes the jump times of $\tilde{n}$. 
The key observation is that this expression is entirely independent of the original probability space.
\subsection{Girsanov's theorem and the posterior distribution}
\label{sec:girsanov-reformulation}
Having established that our optimization problem is independent of the initial probability space, we propose a specific and convenient construction of the relevant objects using Girsanov's theorem.

Consider a probability space $(\Omega, \cF, (\cF_t), \Q)$ such that $(N_t)$ is a standard $(\cF_t)$-Poisson process with intensity $1$.
Let $(\tau_k)$ denote the successive jump times of $(N_t)$, and let $\cF^N_t = \sigma( N_r, r \leq t)$ for the natural filtration of $N$.
Let $\tilde{\cA}$ denote the set of all $\cF^N$-predictable processes $\gamma$ such that
\[ a.s., \quad  \int_0^T \gamma^2_u \d u < \infty. \]
Given $\gamma \in \tilde{\cA}$ and $y \in \R$, we consider $(Y_t)$ as the solution to the stochastic differential equation:
\[
Y_t = y + \int_0^t (b(Y_u) + \gamma_u) \d u - \int_0^t Y_{u-} \d N_u. 
\]
For all $\lambda \in \R_+$, we define:
\begin{align*}
	L_t(\lambda) &= \prod_{0 < \tau_k \leq t} f_\lambda(Y_{\tau_k-}) \exp \left(t - \int_0^t f_\lambda(Y_{u})  \d u \right).  
\end{align*}
\begin{lemma}
	Let $\tilde{N}_t = N_t-t$. The equation 
	\[ L_t = 1 + \int_0^t L_{u-} [f_\lambda(Y_{u-}) -1] \d \tilde{N}_u \]
	has a unique locally bounded solution ($\sup_{u \in [0, t]} |L_u| < \infty~a.s.$), given by $L_t(\lambda)$ as defined above. Furthermore, $t \mapsto L_t(\lambda)$ is a $(\Q, (\cF^N_t))$-local martingale. 
\end{lemma}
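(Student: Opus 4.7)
The plan is to establish the three claims in order: (i) verify by direct computation that the explicit product $L_t(\lambda)$ solves the SDE, (ii) prove uniqueness among locally bounded solutions by a pathwise induction over the jump times of $N$, and (iii) derive the local martingale property from the fact that $\tilde{N}$ is a $\Q$-martingale. The key facts I will use repeatedly are that $\sup_{y \in \R} f_\lambda(y) < \infty$ for each fixed $\lambda$ by Assumption~\ref{ass:initial-ass} and that $N$ has only finitely many jumps on $[0, T]$ almost surely.

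For the existence part, I would decompose the evolution of $L_t(\lambda)$ into its continuous motion on each excursion interval $(\tau_k, \tau_{k+1})$ and its jumps at each $\tau_k$. Between jumps the product factor $\prod_{\tau_j \leq t} f_\lambda(Y_{\tau_j-})$ is frozen, so $L_t$ is absolutely continuous with derivative $L_t (1 - f_\lambda(Y_t))$, which contributes $-\int_0^t L_{u-}(f_\lambda(Y_{u-}) - 1)\, du$ to the integrated dynamics. At a jump $\tau_k$, one additional factor $f_\lambda(Y_{\tau_k-})$ enters the product, so $\Delta L_{\tau_k} = L_{\tau_k-} (f_\lambda(Y_{\tau_k-}) - 1)$, which produces the stochastic integral $\int_0^t L_{u-}(f_\lambda(Y_{u-}) - 1)\, dN_u$. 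Adding these two contributions and recognizing that $d\tilde{N}_u = dN_u - du$ yields the claimed SDE.

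For uniqueness, I would prove by induction on $k$ that any locally bounded solution $L'$ coincides with the explicit $L$ on $[0, \tau_{k+1}]$. On the open interval $(\tau_k, \tau_{k+1})$ the SDE reduces to the linear ODE $\dot{L}'_t = -L'_t (f_\lambda(Y_t) - 1)$ with bounded coefficient, so Gronwall's lemma yields uniqueness from the initial value $L'_{\tau_k}$. At $\tau_{k+1}$ the SDE forces $L'_{\tau_{k+1}} = L'_{\tau_{k+1}-} f_\lambda(Y_{\tau_{k+1}-})$, which propagates the induction, with base case $L'_0 = 1$. Finally, for the local martingale property, since $\tilde{N}$ is a $(\Q, (\cF^N_t))$-martingale and the integrand $H_u := L_{u-}(f_\lambda(Y_{u-}) - 1)$ is $\cF^N$-predictable, I would localize by $T_n := \inf\{ t \geq 0 : |L_t| \geq n \} \wedge T$. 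On $[0, T_n]$ the integrand is bounded, hence integrable against the compensator $du$ and the jump measure $dN_u$, so the stopped process $L^{T_n}$ is a true martingale. The main technical point is checking that $T_n \to T$ almost surely, which I would address via the pathwise bound $\sup_{t \in [0,T]} |L_t(\lambda)| \leq e^T \prod_{\tau_k \leq T} f_\lambda(Y_{\tau_k-}) < \infty$ a.s., where the exponential factor is controlled using $f_\lambda \geq 0$ and the product is finite since $N_T < \infty$ almost surely.
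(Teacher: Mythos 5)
Your proof is correct in substance, but it takes a different route from the paper: the paper disposes of this lemma with a single citation to the exponential formula for point processes (Br\'emaud, Th.~A.T4), whereas you reprove that result from scratch. Your argument is essentially the standard proof of the cited theorem: existence by checking the product formula between jumps (where the dynamics reduce to the linear ODE $\dot L = L(1-f_\lambda(Y))$) and at jumps (where $\Delta L_{\tau_k} = L_{\tau_k-}(f_\lambda(Y_{\tau_k-})-1)$), uniqueness by induction over the jump intervals, and the local martingale property by stopping at $T_n = \inf\{t : |L_t|\ge n\}$ and invoking boundedness of the stopped integrand against the compensated measure. What your self-contained version buys is transparency about exactly where each hypothesis is used --- in particular that $\sup_y f_\lambda(y) < \infty$ for fixed $\lambda$ is what makes the ODE coefficient bounded and the localized integrand integrable; what the citation buys is brevity, since this is textbook material.

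One small imprecision: your closing bound $\sup_{t\in[0,T]}|L_t(\lambda)| \le e^{T}\prod_{\tau_k\le T} f_\lambda(Y_{\tau_k-})$ is not literally correct, because a factor $f_\lambda(Y_{\tau_k-})<1$ occurring \emph{after} time $t$ would make the right-hand side smaller than $L_t(\lambda)$ (and it could even vanish). The correct estimate, and the one the paper uses later, is $L_t(\lambda) \le C(\lambda)^{N_t} e^{t}$ with $C(\lambda) := \max\bigl(1, \sup_y f_\lambda(y)\bigr)$, which still gives $\sup_{[0,T]}|L_t(\lambda)| < \infty$ a.s.\ since $N_T<\infty$ a.s. With that one-line fix your localization argument, and hence the whole proof, goes through.
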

\begin{proof}
	This follows from \cite[Th. A.T4]{BP}.
\end{proof}
Now, consider $\Lambda$ an $\cF_0$-measurable random variable with probability distribution $\mu$. Since $(N_t)$ is a standard $(\cF_t)$ Poisson process under $\Q$, $\Lambda$ and $(N_t)_{t \geq 0}$ are independent under $\Q$.  
\begin{lemma}
	Under Assumption~\ref{ass:initial-ass}, $\E_\Q L_T(\Lambda) = 1$, and thus $(L_t(\Lambda))$ is a $(\Q, (\cF_t))$-martingale. 
\end{lemma}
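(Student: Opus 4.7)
The plan is to first establish the identity $\E_\Q L_T(\lambda) = 1$ for each fixed deterministic $\lambda \geq 0$, then integrate over the prior distribution of $\Lambda$ using independence and Fubini, and finally upgrade the resulting moment identity to the martingale property.

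For the first step I exploit the explicit product representation of $L_t(\lambda)$: since $f_\lambda$ is bounded by $M_\lambda := \sup_{y \in \R} f_\lambda(y) < \infty$ (finite for each fixed $\lambda$ by Assumption~\ref{ass:initial-ass}) and non-negative, the pathwise bound $L_t(\lambda) \leq M_\lambda^{N_t} e^t \leq M_\lambda^{N_T} e^T$ holds on $[0, T]$. Under $\Q$, $N_T$ is Poisson with mean $T$, so this dominating random variable lies in $L^1(\Q)$. Combined with the local martingale property furnished by the preceding lemma, a standard localization argument (stopping at $\sigma_n := \inf\{t : L_t(\lambda) \geq n\}$ and invoking dominated convergence) yields $\E_\Q L_t(\lambda) = 1$ for every $t \in [0, T]$.

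For the second step, since $\gamma$ is $\cF^N$-predictable the process $(Y_t)$ is determined by the path of $(N_t)$, so $L_T(\lambda)$ is a measurable function of $\lambda$ and $(N_s)_{s \leq T}$. Because $\Lambda$ is $\cF_0$-measurable while $(N_t)$ is an $(\cF_t)$-Poisson process under $\Q$, the variable $\Lambda$ is independent of $(N_t)_{t \in [0, T]}$. Applying Fubini--Tonelli (legitimate since $L_T \geq 0$) then gives $\E_\Q[L_T(\Lambda)] = \int_{\R_+} \E_\Q[L_T(\lambda)] \, \mu(\d \lambda) = 1$. To conclude, I observe that $L_t(\Lambda)$ itself satisfies the stochastic integral equation $L_t(\Lambda) = 1 + \int_0^t L_{u-}(\Lambda)[f_\Lambda(Y_{u-}) - 1] \, \d \tilde{N}_u$ with an $(\cF_t)$-predictable integrand (since $\Lambda$ is $\cF_0$-measurable), so $(L_t(\Lambda))_{t \in [0, T]}$ is a non-negative $(\cF_t)$-local martingale and hence a supermartingale by Fatou's lemma. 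The identity $\E_\Q L_T(\Lambda) = 1 = L_0(\Lambda)$ then forces equality in the supermartingale inequality and upgrades it to a genuine martingale.

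The main subtlety is that the bound $M_\lambda$ is not uniform in $\lambda$ (Assumption~\ref{ass:initial-ass} only gives quadratic growth), so $L_T(\Lambda)$ cannot be dominated by a single integrable variable; this is why the argument must first treat $\lambda$ as deterministic—where boundedness of $f_\lambda$ yields a true martingale via localization—and then pass to the random $\Lambda$ through non-negativity and Fubini--Tonelli, rather than attempting a direct dominated-convergence argument at the level of $\Lambda$.
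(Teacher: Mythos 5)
Your proposal is correct and follows essentially the same route as the paper: first prove $\E_\Q L_T(\lambda)=1$ for fixed $\lambda$ using the pathwise bound $L_t(\lambda)\leq C(\lambda)^{N_t}e^t$, then pass to $\Lambda$ via independence of $\Lambda$ and $(N_t)$ under $\Q$. The only cosmetic difference is that the paper establishes the fixed-$\lambda$ martingale property by verifying the integrability criterion $\E_\Q\int_0^T L_{u-}(\lambda)\abs{f_\lambda(Y_{u-})-1}\,\d u<\infty$ from Br\'emaud, whereas you do the localization and dominated convergence by hand and make explicit the final supermartingale-with-constant-expectation upgrade that the paper leaves implicit.
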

\begin{proof}
	By assumption, for all $\lambda \in \R_+$, 
	\[ C(\lambda) := \sup_{x \geq 0} f_\lambda(x) < \infty. \]
	We first show that $\E_\Q L_T(\lambda) = 1$ for all $\lambda \in \R_+$.
	Since $L_{t} \leq C(\lambda)^{N_t} e^{t}$ and $\abs{f_{\lambda}(Y_{t-}) - 1} \leq C(\lambda) + 1$, it follows that:
	\[ \E_\Q \int_0^T L_{u-}(\lambda) \abs{f_\lambda(Y_{u-}) - 1} \d u \leq \int_0^T e^{u C(\lambda)}(1+C(\lambda))\d u < \infty.  \]
	By \cite[T8, p. 27]{BP}, this implies $\E_\Q L_T(\lambda) = 1$.
	Next, by Doob's lemma, there exists a measurable function $\Phi$ such that:
	\[ L_T(\lambda) = \Phi(N_{\cdot \wedge T}, \lambda). \]
	Since $(N_t)_{t \geq 0}$ and $\Lambda$ are independent under $\Q$, we have: 
	\[ \E_\Q [\Phi(N_{\cdot \wedge T}, \Lambda) ~|~ \Lambda] = \Psi(\Lambda), \]
	where $\Psi(\lambda) := \E_\Q \Phi(N_{\cdot \wedge T}, \lambda) = \E_\Q L_T(\lambda) = 1$.
	Thus: 
		\[
			\E_\Q L_T(\Lambda) = \E_\Q \Psi(\Lambda) = 1.
		\]
\end{proof}
By Girsanov's theorem, we obtain the following:
\begin{proposition}
	\label{prop:girsanov}
	On $\d \P := L_T(\Lambda) \d \Q$, $(N_t)$ is a point process with stochastic intensity $f_\Lambda(Y_{t-})$. For any measurable test function $\phi: \R \rightarrow \R_+$, it holds that:
\begin{align*}  \E_\P [ \phi (\Lambda ) ~|~ \cF^N_t ]  &= \frac{ \E_\Q[ \phi(\Lambda) L_t(\Lambda) ~|~ \cF^N_t] }{\E_\Q[ L_t(\Lambda) ~|~ \cF^N_t]} \\
		&= \frac{ \int \varphi(\lambda) L_t(\lambda) \mu(\d \lambda) }{ \int L_t(\lambda)  \mu(\d \lambda) } =: \frac{ \langle \phi L_t, \mu \rangle }{ \langle L_t, \mu \rangle }. 
\end{align*}
\end{proposition}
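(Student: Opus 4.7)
The proof naturally splits into two independent claims: (i) the intensity change under the new measure, and (ii) the Bayes-type formula for the posterior.

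For claim (i), I would apply Brémaud's Girsanov theorem for point processes directly (see \cite[Ch. VI, T2]{BP}). The preceding lemmas supply the two hypotheses needed: $L_t(\Lambda)$ is a nonnegative $(\Q,(\cF_t))$-martingale with $\E_\Q L_T(\Lambda)=1$, and it solves the SDE $dL_t = L_{t-}(f_\Lambda(Y_{t-})-1)d\tilde N_t$, so that under $\P$ the compensator of $N$ becomes $\int_0^t f_\Lambda(Y_{u-})\,du$. The quadratic growth assumption on $f_\lambda$ together with $\E[\Lambda^2]<\infty$ guarantees the integrability needed to apply Brémaud's theorem to the full (non-conditional) $\Lambda$.

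For claim (ii), the first equality is the abstract Bayes formula for a change of measure applied to the sub-$\sigma$-algebra $\cF^N_t \subset \cF_t$:
\[
\E_\P[\phi(\Lambda) \,|\, \cF^N_t] = \frac{\E_\Q[\phi(\Lambda) L_T(\Lambda) \,|\, \cF^N_t]}{\E_\Q[L_T(\Lambda) \,|\, \cF^N_t]}.
\]
To pass from $L_T(\Lambda)$ to $L_t(\Lambda)$, I condition first on $\cF_t$ and use the tower property. Since $\phi(\Lambda)$ is $\cF_0$-measurable hence $\cF_t$-measurable, and $L_\cdot(\Lambda)$ is a $(\Q,(\cF_t))$-martingale,
\[
\E_\Q[\phi(\Lambda) L_T(\Lambda) \,|\, \cF^N_t] = \E_\Q\bigl[\phi(\Lambda)\,\E_\Q[L_T(\Lambda) \,|\, \cF_t] \,\big|\, \cF^N_t\bigr] = \E_\Q[\phi(\Lambda) L_t(\Lambda) \,|\, \cF^N_t],
\]
and similarly for the denominator, taking $\phi\equiv 1$.

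For the last equality, the key point is that $L_t(\lambda)$ is a deterministic measurable functional of the path $N_{\cdot\wedge t}$ and of $\lambda$, as is manifest from its explicit product-exponential form. Under $\Q$, $\Lambda$ has law $\mu$ and is independent of $(N_t)$ (being $\cF_0$-measurable against a standard Poisson process). Therefore a Fubini/independence argument yields
\[
\E_\Q[\phi(\Lambda) L_t(\Lambda) \,|\, \cF^N_t] = \int \phi(\lambda) L_t(\lambda)\, \mu(d\lambda) = \langle \phi L_t, \mu\rangle,
\]
and dividing by the analogous expression with $\phi\equiv 1$ yields the stated formula. The only delicate point is that the derivation of Claim (i) uses the unconditional martingale property established in the previous lemma; once that is granted, everything else is bookkeeping with the tower property and the $\Q$-independence of $\Lambda$ and $N$.
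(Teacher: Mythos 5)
Your proof is correct and follows essentially the same route as the paper's: Brémaud's Girsanov theorem for the intensity claim, the abstract Bayes formula (which the paper derives by hand from the defining property of conditional expectation on sets $A\in\cF^N_t$) for the first equality, and the $\Q$-independence of $\Lambda$ and $(N_t)$ for the last. The only cosmetic difference is that you reduce $L_T(\Lambda)$ to $L_t(\Lambda)$ via the $(\Q,(\cF_t))$-martingale property and the tower property before invoking independence, whereas the paper folds that reduction into the independence/Fubini step; both are valid.
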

\begin{proof}
	This follows from \cite[Th. 3]{BP} and \cite[Lem. 5]{BP}. For completeness, we provide the arguments for the second part of the result. Let $A \in \cF^N_t$.  By definition of conditional expectation, we have $\E_\P[ \indica{A} \phi(\Lambda)] = \E_\P[ \indica{A} \E_\P [\phi(\Lambda) ~|~\cF^N_t]]$. Thus, 
	\[ \E_\Q [ \indica{A} \phi(\Lambda) L_T(\Lambda)] = \E_\Q[ \indica{A} L_T(\Lambda) \E_\P [\phi(\Lambda) ~|~\cF^N_t]]. \]
	Taking the conditional expectation with respect to $\cF^N_t$, we obtain:
	\[  \E_\Q [ \indica{A} \E_\Q[ \phi(\Lambda) L_T(\Lambda)~|~\cF^N_t] ] = \E_\Q[ \indica{A} \E_\Q[L_T(\Lambda)~|~\cF^N_t] \E_\P [\phi(\Lambda) ~|~\cF^N_t]]. \]
	Since this holds for all $A \in \cF^N_t$ it follows that:
	\[  \E_\Q[ \phi(\Lambda) L_T(\Lambda)~|~\cF^N_t] =   \E_\Q[L_T(\Lambda)~|~\cF^N_t] \E_\P [\phi(\Lambda) ~|~\cF^N_t], \]
	yielding the first equality for $\langle \phi, M^\gamma_t \rangle$. For the second equality, since $(N_t)$ and $\Lambda$ are independent under $\Q$, we have:
	\[ \E_\Q[ \phi(\Lambda) L_T(\Lambda)~|~\cF^N_t] = \int_{\R_+} \phi(\lambda) L_t(\lambda) \mu(\d \lambda).  \]
	This implies the stated formula.
\end{proof}
\begin{corollary}
	\label{cor:aposteriori-dist}
Thus, $a.s.$, the posterior distribution is given by:  
\[ M^\gamma_t := \cL_\P( \Lambda ~|~ \cF^N_t) \propto \prod_{\tau_k \leq t} f_\lambda(Y_{\tau_k-}) \exp \left( - \int_0^t f_\lambda(Y_s) \d s \right) \mu(\d \lambda). \] 
	 Additionally, the variance of the posterior distribution, $V(N) := Var(M^\gamma_T) = \E_\P[\Lambda^2~|~\cF^N_T] - \left( \E_\P(\Lambda~|~ \cF^N_T) \right)^2$, is given by:
	\[ V(N) =  \frac{ \langle \phi_2 L_t, \mu \rangle }{ \langle L_t, \mu \rangle } -  \left(\frac{ \langle \phi_1 L_t, \mu \rangle }{ \langle L_t, \mu \rangle }\right)^2,  \quad \text{ where } \quad \phi_1(\lambda) = \lambda \text{ and }\phi_2(\lambda) = \lambda^2. \]
\end{corollary}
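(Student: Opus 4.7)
The plan is to derive both assertions as direct consequences of Proposition~\ref{prop:girsanov}, which has just identified the conditional law $M^\gamma_t$ through the Kallianpur–Striebel-type formula
\[ \langle \phi, M^\gamma_t \rangle = \frac{ \langle \phi L_t, \mu \rangle }{ \langle L_t, \mu \rangle }. \]
This identity, valid for every bounded measurable $\phi$, immediately says that $M^\gamma_t$ is absolutely continuous with respect to $\mu$, with unnormalized density proportional to $\lambda \mapsto L_t(\lambda)$. Hence the only task is to unwind the explicit formula for $L_t(\lambda)$ supplied by the lemma above:
\[ L_t(\lambda) = \prod_{0 < \tau_k \leq t} f_\lambda(Y_{\tau_k-})\, \exp\!\left( t - \int_0^t f_\lambda(Y_u)\, \d u \right). \]

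The first assertion then follows by observing that the factor $e^t$ does not depend on $\lambda$ and therefore cancels between numerator and denominator of $\langle \phi L_t, \mu \rangle / \langle L_t, \mu \rangle$; equivalently, it is absorbed into the proportionality constant. What remains is precisely
\[ M^\gamma_t(\d \lambda) \propto \prod_{\tau_k \leq t} f_\lambda(Y_{\tau_k-}) \, \exp\!\left( -\int_0^t f_\lambda(Y_s)\, \d s \right) \mu(\d \lambda), \]
as claimed.

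For the second assertion, I apply the same identity with the two test functions $\phi_1(\lambda) = \lambda$ and $\phi_2(\lambda)=\lambda^2$, which are admissible because $\mu \in \cP_2(\R_+)$ and $L_T(\lambda)$ is $\mu \otimes \Q$-integrable (the latter is precisely the content of the preceding lemma $\E_\Q L_T(\Lambda)=1$ combined with Fubini). This yields explicit formulas for $\E_\P[\Lambda\,|\,\cF^N_T]$ and $\E_\P[\Lambda^2\,|\,\cF^N_T]$, and combining them through the elementary identity $\text{Var}(X) = \E[X^2] - (\E[X])^2$ gives the announced expression for $V(N)$. Since this is purely a bookkeeping step with no analytic subtlety, there is no real obstacle; the entire content of the corollary has been packaged into Proposition~\ref{prop:girsanov} and into the explicit form of $L_t(\lambda)$.
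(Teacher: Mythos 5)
Your proof is correct and follows exactly the route the paper intends: the corollary is an immediate consequence of Proposition~\ref{prop:girsanov} together with the explicit product form of $L_t(\lambda)$, the $\lambda$-independent factor $e^t$ being absorbed into the normalization, and the variance formula obtained by taking $\phi_1(\lambda)=\lambda$ and $\phi_2(\lambda)=\lambda^2$. Note that your write-up even restores the square on $\langle \phi_1 L_T, \mu\rangle / \langle L_T, \mu\rangle$ that is missing (evidently a typo) in the paper's displayed formula for $V(N)$.
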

\begin{remark}
Let $\phi: \R \rightarrow \R_+$ be a non-negative measurable function.
Then, $t \mapsto \langle \phi, M^\gamma_t \rangle = \E_\P [ \phi(\Lambda)~|~ \cF^N_t ]$ is a martingale. In other words, $M^\gamma_t$ is a measure-valued martingale. In particular, it follows that
$\E_\P \E_\P [\Lambda^2 ~|~ \cF^N _t] = \E \Lambda^2 < \infty$. 
Thus, for any admissible control $\gamma$, we have:
\[ \P(\d \omega) ~a.s., \quad \E_\P [\Lambda^2 ~|~ \cF^N_t] < \infty. \]
Furthermore, by the law of total variance:
\[ Var(\mu) = Var(M^\gamma_0) = \E_\P Var(M^\gamma_t) + Var( \E_\P[ \Lambda ~|~ \cF^N_t] ). \]
This implies that, in expectation, the variance of $M^\gamma_t$ decreases over time.
\end{remark}
Finally, we have the equivalence between the original control problem and the reformulation using Girsanov's theorem.
\begin{proposition}
	Let $\gamma \in \tilde{\cA}$. 
	\begin{enumerate}
		\item There exists $\Psi = (\Psi_k)_{k \geq 0}$ a collection of measurable functions from $D([0, T])$ to $L^2([0, T])$ such that $\gamma$ admits the representation
			\[ \gamma_t = \sum_{k \geq 0} \Psi_k(N_{\cdot \wedge \tau_k})(t) \indica{(\tau_k, \tau_{k+1}]}(t).   \]
		\item In addition, the law of $(N_t)_{t \in [0, T]}$ on $\d \P = L_T(\Lambda) \d \Q$ is equal to $Q_{\Psi, \mu}$.
	\end{enumerate}
	Therefore, the original problem control~\eqref{eq:original-value} and our reformulation using Girsanov's theorem are equivalent, namely:
	\begin{equation}  \inf_{\gamma \in \tilde{\cA} } \E_\Q \left[ L_T(\Lambda) \left( \frac{1}{2} \int_0^T \gamma^2_u \d u + \kappa V(N) \right) \right] = \inf_{\gamma \in \cA} \E_\P \left[  \frac{1}{2}\int_0^T { \gamma^2_u \d u} + \kappa \text{Var}(M^{\gamma}_T) \right].  
	\end{equation}
\end{proposition}
\begin{proof}
	As $\gamma \in \tilde{\cA}$ is $(\cF^N_t)$-predictable and $\Q(\int_0^T \gamma^2_s \d s < \infty) = 1$, Proposition~\ref{prop:caracterisation-progressible-processes} and the subsequent remark apply. This gives the first point. In addition, by Proposition~\ref{prop:girsanov}, on $\d \P = L_T(\Lambda) \d \Q$, $(N_t - \int_0^t f_\Lambda(Y_s) \d s)$ is a $(\P, \cF_t)$-martingale. We now apply \cite[Lem. 4]{zbMATH00971853} (see also~\cite[p. 469-478]{zbMATH03644252} for the result in a more general setting): on an enlarged probability space $(\Omega', \cF', (\cF'_t), \P')$, there exists a $(\cF'_t)$-Poisson measure $\Pi(\d s, \d z)$ on $\R_+ \times \R_+$ of intensity the Lebesgue measure $\d s \d z$ such that:
	\[  N_t = \int_0^t \int_{\R_+} \indic{z \leq f_\Lambda(Y_{s-})} \Pi(\d s, \d z), \quad t \geq 0. \]
As $\Lambda$ is $\cF'_0$-measurable, we deduce that $\Lambda$ and $\Pi$ are independent.
Therefore, $(\Lambda, Y_t, N_t, \gamma_t)$ satisfies the assumptions of Lemma~\ref{lem:triple-L-N-Y}. Consequently, the law of $(N_t)$ is characterized and:
\[ \cL\left((N_t)_{t \in [0, T]}\right) = Q_{\Psi, \mu}. \]
Altogether, it holds that:
\begin{align*}  &\E_\Q \left[ L_T(\Lambda) \left(  \frac{1}{2} \int_0^T \gamma^2_u \d u + \kappa V(N) \right) \right] \\
	\quad &=  \int \left[  \sum_{k \geq 0} \frac{1}{2} \int_0^T {\Psi^2_{k}(\tilde{n}_{\cdot \wedge \tilde{\tau}_k})(s) \indica{(\tilde{\tau}_k, \tilde{\tau}_{k+1}]}(s) \d s} + \kappa V(\tilde{n}) \right] Q_{\Psi, \mu}(\d \tilde{n}).
\end{align*}
This concludes the proof.
\end{proof}
\subsection{The value function}

\label{sec:the-value-function}
 We now generalize the formulation of the problem to define the value function.
\subsubsection*{Probability spaces.}

We consider the canonical space $(\Omega, \cF, (\cF^N_t), \Q)$ associated with a standard Poisson process $(N_t)$ with intensity $1$. Specifically, $\Omega$ is the space of right-continuous simple counting functions, where $\Omega \ni \omega = (\omega_t)_{t \geq 0}$ satisfies $\omega_0 = 0$ and is a piecewise constant function with a non-accumulating infinite sequence of jumps of size one. We define $N_t(\omega) = \omega_t$ for all $t \geq 0$. The space $\Omega$ is equipped with the Skorokhod topology, making it a Polish space. In addition, $\cF$ is the associated Borelian $\sigma$-algebra, which is equal to $\sigma(N_t, t \geq 0)$, and we define $\cF^N_t = \sigma(N_s, s \leq t)$. The measure $\Q$ is law of a standard Poisson process. We denote by $(\tau_k(\omega))$ the successive jump times of $(N_t(\omega))$, with the convention $\tau_0(\omega) = 0$.

\subsubsection*{Controls.}

Following \cite{BouchardTouzi} (or \cite{zbMATH06624241} in a more general setting) for $t \geq 0$, we define the translated filtration $(\cF^{N, t}_s)_{s \geq 0}$  as:
\[ \cF^{N, t}_s := \sigma( N_r - N_t, r \in [t, t \vee s]). \]
This filtration satisfies the property that, for all $t, s \in \R_+$, a random variable measurable with respect to $\cF^{N, t}_s$ is independent of $\cF^N_t = \cF^{N, 0}_t$.
We denote by $\cA_t$ the set of all $(\cF^{N,t}_s)$-predictable processes $\gamma$ such that, almost surely, $\int_0^T \gamma^2_u \d u < \infty$.  

\subsubsection*{Value function.}

Given $\gamma \in \cA_t, y \in \R$, and $\mu \in \cP_2(\R_+)$, we define:
\begin{align*} J^\gamma(t, y, \mu) := \E_\Q \left[ \int_{\R_+} \left( \int_t^T \frac{\gamma_s^2}{2} \d s + \kappa\text{Var}(M^{t, y, \mu, \gamma}_T) \right) L^{y, \gamma}_{t, T}(\lambda) \mu(\d \lambda) \right], 
\end{align*}
where, for all $s \in [t, T]$: 
\begin{align}
& L^{y, \gamma}_{t, s}(\lambda) = \prod_{ \tau_k \in  (t, s]} f_{\lambda}(Y^{t, y, \gamma}_{\tau_k-}) \exp \left( (s-t) -\int_t^s f_\lambda(Y^{t, y, \gamma}_{u}) \d u  \right), \label{eq:def-L-indices}\\
& Y^{t, y, \gamma}_{s} = y + \int_t^s \gamma_u \d u - \int_t^s Y^{t, y, \gamma}_{u-} \d N_u, \label{eq:def-Y-indices}
\end{align}
and the posterior measure $M^{t, y, \mu, \gamma}_s$ is defined by:
\begin{align}
& \langle \phi, M^{t, y, \mu, \gamma}_s \rangle  := \frac{  \langle \phi L^{y, \gamma}_{t, s}, \mu \rangle }{ \langle L^{y, \gamma}_{t, s}, \mu \rangle }, \label{eq:def-M-indices}
 \end{align}
for any non-negative measurable function $\phi: \mathbb{R}_+ \to \mathbb{R}_+$. The variance of $M^{t, y, \mu, \gamma}_T$ is given by:
\begin{align*}
& \text{Var}(M^{t, y, \mu, \gamma}_T) = \langle \phi_2, M^{t, y, \mu, \gamma}_T \rangle - (\langle \phi_1, M^{t, y, \mu, \gamma}_T \rangle)^2, \quad \text{ with } \phi_1(\lambda) = \lambda, \phi_2(\lambda) = \lambda^2. \end{align*}
The value function is defined as:
\[ v(t, y, \mu) := \inf_{\gamma \in \cA_t} J^\gamma(t, y, \mu). \]
We have shown in the previous sections that:
\begin{theorem}
	The value defined by the optimization problem~\eqref{eq:original-value} is equal to $v(0, y, \mu)$.
\end{theorem}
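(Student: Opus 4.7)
The theorem is essentially a bookkeeping statement: it combines the probability-space independence established in Section~\ref{sec:def-problem} with the Girsanov reformulation of Section~\ref{sec:girsanov-reformulation}. The strategy is to start from an arbitrary filtered space carrying $\Lambda\sim\mu$ and the Poisson measure $\Pi$, and then transport the whole optimization to the canonical space of the value function definition.

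First, I would invoke the calculation at the end of Section~\ref{sec:def-problem}: using Proposition~\ref{def:triple-L-N-Y}, every admissible $\gamma$ is identified with some $\Psi\in\tilde A$, and the original cost is expressed as an integral against $Q_{\Psi,\mu}$. This shows that the value in~\eqref{eq:original-value} depends only on $y$, $\mu$, $T$ and $\kappa$, not on the underlying $(\Omega,\cF,(\cF_t),\P)$. Consequently we may, and we shall, work with the canonical space $(\Omega,\cF,(\cF^N_t),\Q)$ of Section~\ref{sec:the-value-function} augmented with an independent random variable $\Lambda$ of law $\mu$; the set of admissible controls is then precisely $\cA_0$ (predictable with respect to $(\cF^{N,0}_s)=(\cF^N_s)$ and square-integrable a.s.).

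Next, I would apply Proposition~\ref{prop:girsanov} with $\d\P:=L^{y,\gamma}_{0,T}(\Lambda)\,\d\Q$. Since $\E_\Q L^{y,\gamma}_{0,T}(\lambda)=1$ for each $\lambda$ and $\Lambda\perp N$ under $\Q$, the law of $\Lambda$ under $\P$ is still $\mu$, and under $\P$ the process $(N_t)$ has stochastic intensity $f_\Lambda(Y^{0,y,\gamma}_{t-})$; the dynamics of $(Y^{0,y,\gamma}_t)$ therefore matches those prescribed in Section~\ref{sec:def-problem}. By Corollary~\ref{cor:aposteriori-dist}, the posterior $M^\gamma_T=\cL_\P(\Lambda\mid\cF^N_T)$ coincides with $M^{0,y,\mu,\gamma}_T$ defined in~\eqref{eq:def-M-indices}, and in particular $\mathrm{Var}(M^\gamma_T)$ agrees with $\mathrm{Var}(M^{0,y,\mu,\gamma}_T)$, an $\cF^N_T$-measurable functional of the path of $N$.

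Finally, I would rewrite the expectation. Setting
\[ X^\gamma:=\tfrac12\int_0^T\gamma_s^2\,\d s+\kappa\,\mathrm{Var}(M^{0,y,\mu,\gamma}_T), \]
which is $\cF^N_T$-measurable, the change of measure gives $\E_\P X^\gamma=\E_\Q[X^\gamma L^{y,\gamma}_{0,T}(\Lambda)]$. Since $\Lambda\perp(\cF^N_T)$ under $\Q$, Fubini yields
\[ \E_\Q\bigl[X^\gamma L^{y,\gamma}_{0,T}(\Lambda)\bigr]=\E_\Q\!\left[\int_{\R_+}X^\gamma L^{y,\gamma}_{0,T}(\lambda)\,\mu(\d\lambda)\right]=J^\gamma(0,y,\mu). \]
Taking the infimum over $\gamma\in\cA_0=\cA$ on both sides gives the claim. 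The only delicate point that deserves care is to check that the Fubini step is justified: this requires verifying $\E_\Q\bigl[|X^\gamma|L^{y,\gamma}_{0,T}(\Lambda)\bigr]<\infty$ (or at least that both sides are well-defined in $[0,+\infty]$), which follows from $X^\gamma\ge0$ together with the integrability bounds used to prove $\E_\Q L^{y,\gamma}_{0,T}(\lambda)=1$ in the lemma preceding Proposition~\ref{prop:girsanov}. This is the one step I would write out carefully; the rest is a direct assembly of the statements already proved in Sections~\ref{sec:def-problem} and~\ref{sec:girsanov-reformulation}.
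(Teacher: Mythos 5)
Your proposal is correct and follows essentially the same route as the paper, which states this theorem as a direct summary of the probability-space independence argument of Section~\ref{sec:def-problem} and the Girsanov/Fubini rewriting at the end of Section~\ref{sec:girsanov-reformulation}. Your explicit attention to the Tonelli justification (harmless here since the integrand is non-negative) is a minor addition of care rather than a different argument.
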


\subsection{Reduction to finite dimension}
\label{sec:reduction-finite-dim}
In what follows, we assume the following:
\begin{assumption}
There is a globally Lipschitz and bounded function $g: \R \rightarrow \R_+$, such that:
\[ \forall y \in \R, \lambda \in \R_+, \quad f_\lambda(y) = \lambda g(y). \]
\end{assumption}
Under this assumption, the optimization problem over the space of probability measures can be reduced to a finite-dimensional problem.
To proceed, given $\mu \in \cP_2(\R_+)$, we define for all $z \geq 0$ and $n \in \N$:
\[ \tilde{v}(t, y, z, n) := v(t, y, m_\mu(n, z)), \quad \text{ where } \quad m_\mu(n, z)(\d \lambda) := \frac{ \lambda^n e^{-\lambda z} \mu(\d \lambda) }{ \int_{\R_+}  \theta^n e^{-\theta z} \mu(\d \theta) }. \]
Note that $m_\mu(0,0) = \mu$. We also define:
\[ \Phi_\mu(n, z) := \int_{\R_+} \lambda^n e^{-\lambda z} \mu(\d \lambda), \]
and observe that:
\[ \forall k \in \N, \quad \int_{\R_+} \lambda^k m_\mu(n, z) (\d \lambda) = \frac{\Phi_\mu(n+k, z)}{\Phi_\mu(n, z)}. \] 
Furthermore, we define:
\begin{equation} 
	\label{eq:Psi_mu}
	\Psi_\mu(n, z) := \text{Var}(m_\mu(n, z)) = \frac{\Phi_\mu(n+2, z)}{ \Phi_\mu(n, z)} - \left( \frac{\Phi_\mu(n+1, z)}{ \Phi_\mu(n, z)} \right)^2. 
\end{equation}
Recall the definition of $M^{t, y, \mu, \gamma}_s$ and $Y^{t, y, \gamma}_s$ given by equations~\eqref{eq:def-M-indices} and ~\eqref{eq:def-Y-indices}. We introduce:
\begin{equation}
Z^{t, y, z, \gamma}_s := z + \int_t^s g(Y^{t, y, \gamma}_u) \d u. \label{eq:def-Z-indices}
\end{equation}
\begin{lemma}
	Let $\gamma \in \cA_t$. It holds a.s. that:
\[ M^{t, y, m_\mu(n, z), \gamma}_{s} = m_\mu(n + N_s - N_t, Z^{t, y, z, \gamma}_s), \]
\end{lemma}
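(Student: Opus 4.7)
The statement is essentially an algebraic identity; the main task is to unwind the definitions and exploit the linearity $f_\lambda(y) = \lambda g(y)$.

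The plan is to verify the equality by testing against an arbitrary non-negative measurable function $\phi:\R_+ \to \R_+$. Starting from the definition~\eqref{eq:def-M-indices}, I would write
\[ \langle \phi, M^{t, y, m_\mu(n, z), \gamma}_{s} \rangle = \frac{\int_{\R_+} \phi(\lambda) L^{y,\gamma}_{t,s}(\lambda) \, m_\mu(n,z)(\d\lambda)}{\int_{\R_+} L^{y,\gamma}_{t,s}(\lambda) \, m_\mu(n,z)(\d\lambda)}, \]
and then plug in the explicit form $m_\mu(n,z)(\d\lambda) = \Phi_\mu(n,z)^{-1} \lambda^n e^{-\lambda z} \mu(\d\lambda)$. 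The normalizing constant $\Phi_\mu(n,z)$ cancels between numerator and denominator.

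Next I would use the linearity assumption $f_\lambda(y) = \lambda g(y)$ to rewrite $L^{y,\gamma}_{t,s}(\lambda)$ from~\eqref{eq:def-L-indices}. Setting $\Delta N := N_s - N_t$, the product over jumps factorizes as
\[ \prod_{\tau_i \in (t,s]} f_\lambda(Y^{t,y,\gamma}_{\tau_i-}) = \lambda^{\Delta N} \prod_{\tau_i \in (t,s]} g(Y^{t,y,\gamma}_{\tau_i-}), \]
and by the definition~\eqref{eq:def-Z-indices} of $Z$,
\[ \int_t^s f_\lambda(Y^{t,y,\gamma}_u) \, \d u = \lambda \bigl(Z^{t,y,z,\gamma}_s - z\bigr). \]
Therefore
\[ L^{y,\gamma}_{t,s}(\lambda) = \lambda^{\Delta N} \, G_{t,s} \, e^{(s-t)} \, e^{-\lambda(Z^{t,y,z,\gamma}_s - z)}, \]
where $G_{t,s} := \prod_{\tau_i \in (t,s]} g(Y^{t,y,\gamma}_{\tau_i-})$ is independent of $\lambda$.

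The key observation is that the factors $G_{t,s} e^{(s-t)}$ are $\lambda$-independent and cancel between numerator and denominator. Combining exponents gives $\lambda^{\Delta N} \cdot \lambda^n = \lambda^{n+\Delta N}$ and $e^{-\lambda(Z_s^{t,y,z,\gamma}-z)} \cdot e^{-\lambda z} = e^{-\lambda Z_s^{t,y,z,\gamma}}$, so that
\[ \langle \phi, M^{t,y,m_\mu(n,z),\gamma}_s \rangle = \frac{\int_{\R_+} \phi(\lambda) \lambda^{n+\Delta N} e^{-\lambda Z^{t,y,z,\gamma}_s} \mu(\d\lambda)}{\int_{\R_+} \lambda^{n+\Delta N} e^{-\lambda Z^{t,y,z,\gamma}_s} \mu(\d\lambda)} = \langle \phi, m_\mu(n + \Delta N, Z^{t,y,z,\gamma}_s) \rangle, \]
which yields the claim since $\phi$ is arbitrary. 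There is no substantive obstacle; the only mild subtlety is noticing that all $\lambda$-independent factors — including the stochastic quantity $G_{t,s}$, which could otherwise look worrisome — cancel cleanly, which is precisely what makes the conjugate family $m_\mu(n,z)$ stable under the Bayesian update driven by a Poisson intensity linear in $\lambda$.
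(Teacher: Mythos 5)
Your proof is correct and follows essentially the same route as the paper: the paper's proof simply asserts the identity~\eqref{eq:bayes-m-mu}, and your computation (factoring $L^{y,\gamma}_{t,s}(\lambda)$ into $\lambda^{\Delta N}e^{-\lambda(Z_s-z)}$ times $\lambda$-independent terms that cancel in the ratio) is precisely the verification of that identity.
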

\begin{proof}
This follows from the fact that, for every non-negative measurable function $\phi$, we have:
\begin{equation} 
	\label{eq:bayes-m-mu}
	\langle \phi, m_\mu(n + N_s - N_t, Z^{t, y, z, \gamma}_s) \rangle = \frac{\langle \phi L^{y, \gamma}_{t, s}, m_\mu(n, z) \rangle }{ \langle L^{y, \gamma}_{t, s}, m_\mu(n, z) \rangle }. 
\end{equation}
\end{proof}
As a corollary, we obtain:
\begin{proposition}
	It holds that:
	\[ \tilde{v}(t, y, z, n) = \inf_{\gamma \in \cA_t} \E_\Q \left[ \int_{\R_+} \left( \int_t^T \frac{\gamma_s^2}{2} \d s + \Psi_\mu(n + N_T - N_t, Z^{t, y, z, \gamma}_T) \right) L^{t, y, \gamma}_{T}(\lambda) m_\mu(n, z)(\d \lambda) \right]. \]
\end{proposition}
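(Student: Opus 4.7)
The plan is to treat this proposition as a direct corollary of the preceding lemma together with the definitions of $\tilde{v}$ and $\Psi_\mu$. Essentially nothing new needs to be proved beyond unfolding notation; the real content lies in the lemma itself, and on the observation that the likelihood $L^{y,\gamma}_{t,s}(\lambda)$ has the explicit form of a gamma density in $\lambda$ (up to a $\lambda$-independent normalizer) when $f_\lambda(y) = \lambda g(y)$.

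First I would start from the definition $\tilde{v}(t,y,z,n) := v(t,y,m_\mu(n,z))$ and substitute $\mu' := m_\mu(n,z)$ into the formula defining $v$ in Section~\ref{sec:the-value-function}. This gives
\[ \tilde{v}(t,y,z,n) = \inf_{\gamma \in \cA_t} \E_\Q\!\left[ \int_{\R_+} \Bigl( \int_t^T \tfrac{\gamma_s^2}{2}\,\d s + \kappa\,\text{Var}(M^{t,y,m_\mu(n,z),\gamma}_T) \Bigr) L^{y,\gamma}_{t,T}(\lambda)\, m_\mu(n,z)(\d \lambda) \right]. \]
Then by the preceding lemma, $M^{t,y,m_\mu(n,z),\gamma}_T = m_\mu(n+N_T-N_t, Z^{t,y,z,\gamma}_T)$ almost surely, so by the very definition~\eqref{eq:Psi_mu} of $\Psi_\mu$, the quantity $\kappa \,\text{Var}(M^{t,y,m_\mu(n,z),\gamma}_T)$ is exactly $\kappa\,\Psi_\mu(n+N_T-N_t, Z^{t,y,z,\gamma}_T)$. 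Substituting yields the claimed expression (modulo the constant $\kappa$, which I would keep explicit to match the definition of $v$).

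For completeness I would also sketch the proof of the lemma, which is the only non-tautological step. Under $f_\lambda(y) = \lambda g(y)$, one checks directly from~\eqref{eq:def-L-indices} that
\[ L^{y,\gamma}_{t,s}(\lambda) = \lambda^{N_s - N_t}\, e^{-\lambda (Z^{t,y,z,\gamma}_s - z)} \cdot C^{y,\gamma}_{t,s}, \]
where $C^{y,\gamma}_{t,s}$ collects the terms $\prod_{\tau_i \in (t,s]} g(Y^{t,y,\gamma}_{\tau_i-})\, e^{s-t}$ that do not depend on $\lambda$. Multiplying by $m_\mu(n,z)(\d\lambda) \propto \lambda^n e^{-\lambda z}\mu(\d\lambda)$ and normalizing, the factors $C^{y,\gamma}_{t,s}$ and the $\Phi_\mu(n,z)$ in the denominator cancel, leaving a density proportional to $\lambda^{n+N_s-N_t} e^{-\lambda Z^{t,y,z,\gamma}_s}\mu(\d\lambda)$, which is exactly $m_\mu(n+N_s-N_t, Z^{t,y,z,\gamma}_s)$ by definition. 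This verifies~\eqref{eq:bayes-m-mu}, and hence the lemma.

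There is no genuine obstacle here: the argument is entirely a computation in the definitions. The only place one must be careful is to ensure that the admissibility class $\cA_t$ is the same on both sides, which is immediate because the infimum on both sides is taken over the same set of controls and the quantities $Y^{t,y,\gamma}$, $Z^{t,y,z,\gamma}$, $L^{y,\gamma}_{t,T}$, and $N$ on the right-hand side are all defined on the canonical probability space $(\Omega,\cF,(\cF^N_t),\Q)$ introduced in Section~\ref{sec:the-value-function}.
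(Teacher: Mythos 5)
Your proposal is correct and follows essentially the same route as the paper: the proposition is stated there as an immediate corollary of the preceding lemma, obtained by substituting $m_\mu(n,z)$ into the definition of $v$ and using $M^{t,y,m_\mu(n,z),\gamma}_T = m_\mu(n+N_T-N_t, Z^{t,y,z,\gamma}_T)$ together with the definition~\eqref{eq:Psi_mu} of $\Psi_\mu$; your sketch of the lemma via the factorization $L^{y,\gamma}_{t,s}(\lambda) = \lambda^{N_s-N_t}e^{-\lambda(Z^{t,y,z,\gamma}_s - z)}C^{y,\gamma}_{t,s}$ is exactly the computation behind~\eqref{eq:bayes-m-mu}. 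Your remark about keeping the constant $\kappa$ explicit is also well taken, as its absence in the displayed formula appears to be a typographical slip.
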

The key observation is that this optimization problem is now a classical finite-dimensional stochastic optimization problem. Henceforth, we focus our analysis on this finite-dimensional problem. To simplify notation, we write $v(t, y, z, n) = \tilde{v}(t, y, z, n)$.
\subsection*{Some properties of the posterior distribution}
We conclude this section by establishing some properties of $m_\mu(n, z)$.
\begin{lemma}\label{lemme:reg_post}
	Let $\Xi_\mu(n,z) := \Phi_\mu(n+1, z) / \Phi_\mu(n,z)$ denote the expected value of the posterior distribution. For all $n \in \N$ and $z \geq 0$, the following hold:
	\begin{enumerate}
		\item $\partial_z \Phi_\mu(n,z) = - \Phi_\mu(n+1, z)$.
		\item $\partial_z \Xi_\mu(n, z) = - \Psi_\mu(n,z)$.
		\item $\Xi_\mu(n+1, z) - \Xi_\mu(n, z) = \frac{\Psi_\mu(n,z)}{\Xi_\mu(n,z)}$.
		\item $\partial_z \Psi_\mu(n, z) = \Xi_\mu(n, z)\left[ \Psi_\mu(n, z) - \Psi_\mu(n+1, z) - \left( \frac{\Psi_\mu(n, z)}{\Xi_\mu(n, z)} \right)^2  \right]$.
	\end{enumerate}
\end{lemma}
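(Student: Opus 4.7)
The overall strategy is to reduce all four claims to direct manipulations of the scalar function $\Phi_\mu(n,z) = \int_{\R_+} \lambda^n e^{-\lambda z}\mu(\d\lambda)$, then deploy the definitions of $\Xi_\mu$ and $\Psi_\mu$ as explicit rational expressions in $\Phi_\mu$. Writing $A = \Phi_\mu(n,z)$, $B = \Phi_\mu(n+1,z)$, $C = \Phi_\mu(n+2,z)$, $D = \Phi_\mu(n+3,z)$ to streamline bookkeeping, everything will follow from (1) together with the quotient rule; no probabilistic input beyond the definition of $m_\mu(n,z)$ is needed.

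For (1), I would justify differentiation under the integral sign by dominated convergence. Fixing $z_0 > 0$ and working in a neighborhood $z \in [z_0/2, 2 z_0]$, the integrand $\lambda \mapsto \lambda^{n+1} e^{-\lambda z}$ is dominated by $\lambda^{n+1} e^{-\lambda z_0/2}$, which is $\mu$-integrable for every $n$ regardless of the moment assumption on $\mu$ because the exponential damping beats the polynomial; so $\partial_z \Phi_\mu(n,z) = -\Phi_\mu(n+1,z)$ for every $z > 0$, with continuity at $z=0$ on both sides whenever the moments exist. Item (2) then follows by the quotient rule: $\partial_z(B/A) = (-C \cdot A + B \cdot B)/A^2 = -(C/A - B^2/A^2) = -\Psi_\mu(n,z)$. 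Item (3) is purely algebraic: reducing $\Xi_\mu(n+1,z) - \Xi_\mu(n,z) = C/B - B/A$ over the common denominator $AB$ yields $(CA - B^2)/(AB)$, and comparing with $\Psi_\mu(n,z)/\Xi_\mu(n,z) = [(CA-B^2)/A^2]\cdot(A/B) = (CA-B^2)/(AB)$ closes the identity.

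The only step with nontrivial bookkeeping is (4). I would write $\Psi_\mu(n,z) = C/A - (B/A)^2$ and differentiate using (1):
\[ \partial_z \Psi_\mu(n,z) = \partial_z(C/A) - 2(B/A)\,\partial_z(B/A) = -\frac{D}{A} + \frac{CB}{A^2} + \frac{2BC}{A^2} - \frac{2B^3}{A^3} = -\frac{D}{A} + 3\frac{BC}{A^2} - 2\frac{B^3}{A^3}. \]
To match the target formula it remains to verify that $(B/A)[\Psi_\mu(n,z) - \Psi_\mu(n+1,z) - (\Psi_\mu(n,z)/\Xi_\mu(n,z))^2]$ equals the same expression. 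Using $\Psi_\mu(n+1,z) = D/B - C^2/B^2$ and $\Psi_\mu(n,z)/\Xi_\mu(n,z) = C/B - B/A$ from (3), expansion gives the three contributions $BC/A^2 - B^3/A^3$, $-D/A + C^2/(AB)$, and $-C^2/(AB) + 2BC/A^2 - B^3/A^3$; the $C^2/(AB)$ terms cancel and the remainder collapses to exactly $-D/A + 3BC/A^2 - 2B^3/A^3$. The main obstacle is really nothing more than this careful cancellation; there is no analytic difficulty beyond the standard dominated-convergence argument at the start.
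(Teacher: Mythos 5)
The paper states this lemma without proof, and your argument supplies exactly the computation the authors evidently had in mind: differentiation under the integral sign for item (1), then quotient-rule algebra in the ratios $A,B,C,D$ for items (2)--(4); I have checked the cancellation in item (4) and both sides do reduce to $-D/A + 3BC/A^2 - 2B^3/A^3$. Your proof is correct (and your caveat about $z=0$ is harmless here, since the paper later restricts to compactly supported $\mu$, under which all moments are finite).
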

\begin{remark}
	 Points 2 and 3 above indicate that the mean of the posterior distribution decreases between jumps but increases immediately after a jump. 
	However, the variance of the posterior distribution does not exhibit a similar behavior. For example, consider the case where $\mu(\d \lambda) = \frac{1}{2} \delta_0 + \frac{1}{2} \delta_{\lambda_{\max}}$. After one jump, the posterior distribution becomes $\delta_{\lambda_{\max}}$, with zero variance: for $n \geq 1, \Psi_\mu(n, z) = 0$. In this case, the variance drops to zero immediately after the first jump.
\end{remark}
\begin{lemma}
	Assume that $\text{Supp}(\mu) \subset [0, \lambda_{\max}]$. Then:
	\[
\Psi_\mu(n, z) \leq \frac{\lambda^2_{\max}}{4} \ \text{ and } \ 
	\abs{ \partial_z \Psi_\mu(n, z) } \leq \frac{1}{2} \lambda^3_{\max}. \]
\end{lemma}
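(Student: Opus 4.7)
The first bound is Popoviciu's inequality. Since $\mu$ is supported in $[0, \lambda_{\max}]$ and $m_\mu(n,z)(\d\lambda) \propto \lambda^n e^{-\lambda z} \mu(\d\lambda)$, the posterior $m_\mu(n,z)$ is also supported in $[0, \lambda_{\max}]$. Any probability measure on an interval of length $L$ has variance at most $L^2/4$, so $\Psi_\mu(n,z) \leq \lambda_{\max}^2/4$. This step is one line.

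For the second inequality, the plan is to identify $\partial_z \Psi_\mu(n,z)$ with (the negative of) the third central moment of the posterior. Writing $m_k := \int \lambda^k m_\mu(n,z)(\d\lambda) = \Phi_\mu(n+k,z)/\Phi_\mu(n,z)$, I would use the elementary identity
\[ \partial_z m_k = m_1 m_k - m_{k+1}, \]
which follows from the quotient rule together with $\partial_z \Phi_\mu(n, \cdot) = -\Phi_\mu(n+1, \cdot)$ (point~1 of Lemma~\ref{lemme:reg_post}). Differentiating $\Psi_\mu = m_2 - m_1^2$ and applying this identity twice yields
\[ \partial_z \Psi_\mu(n,z) = -\bigl( m_3 - 3 m_1 m_2 + 2 m_1^3 \bigr) = -\mu_3(n,z), \]
where $\mu_3(n,z) := \int (\lambda - m_1)^3 m_\mu(n,z)(\d\lambda)$ is the third central moment. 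The same identity can alternatively be read off algebraically from point~4 of Lemma~\ref{lemme:reg_post}, but going through the raw-moment recursion seems cleaner.

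To finish, since both $\lambda$ and $m_1 = \Xi_\mu(n,z)$ lie in $[0, \lambda_{\max}]$, we have $|\lambda - m_1| \leq \lambda_{\max}$, so
\[ |\mu_3(n,z)| \leq \int |\lambda - m_1|^3 \, m_\mu(n,z)(\d\lambda) \leq \lambda_{\max} \int (\lambda - m_1)^2 \, m_\mu(n,z)(\d\lambda) = \lambda_{\max}\, \Psi_\mu(n,z). \]
Combining with the Popoviciu bound from the first step gives $|\partial_z \Psi_\mu(n,z)| \leq \lambda_{\max}^3/4$, which is actually strictly better than the claimed $\lambda_{\max}^3/2$.

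The only non-routine step is the identification of $\partial_z \Psi_\mu$ with a third central moment; once this identity is established, both bounds reduce to elementary estimates on random variables with support in $[0, \lambda_{\max}]$. I do not anticipate any serious obstacle.
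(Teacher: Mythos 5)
Your proof is correct. The paper states this lemma without proof, so there is nothing to compare against literally; the intended argument is presumably Popoviciu's inequality for the first bound and the identity in point~4 of Lemma~\ref{lemme:reg_post} for the second, and your derivation is an equivalent but cleaner repackaging: the recursion $\partial_z m_k = m_1 m_k - m_{k+1}$ follows immediately from point~1, and the resulting identity $\partial_z \Psi_\mu(n,z) = -(m_3 - 3m_1m_2 + 2m_1^3) = -\mu_3(n,z)$ is algebraically the same as point~4 (I checked: expanding point~4 in raw moments gives exactly $3m_1m_2 - m_3 - 2m_1^3$). Identifying the derivative as minus the third central moment makes the estimate transparent, and your bound $|\partial_z\Psi_\mu| \leq \lambda_{\max}\Psi_\mu \leq \lambda_{\max}^3/4$ indeed improves on the stated constant $\lambda_{\max}^3/2$ (which is what one gets from the cruder estimate $\Xi|\Psi(n)-\Psi(n+1)| + \Psi\cdot(\Psi/\Xi) \leq \lambda_{\max}^3/4 + \lambda_{\max}^3/4$ via point~4). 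The only implicit hypothesis, shared with the paper, is that $\Phi_\mu(n,z)>0$ so that the posterior is well defined.
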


\section{Dynamic programming principle and regularity}
\label{sec:PPD-reg}
We now establish that the value function is regular and satisfies a dynamic programming principle. Recall that we work in the canonical space defined in Section~\ref{sec:the-value-function}. For simplicity, given $\mu \in \cP_2(\R_+)$ and a measurable function $h: \R_+ \times \Omega \rightarrow \R_+$, we write:
\[ \E_\Q [ h(\Lambda, N)] := \int_{\R_+} \E_\Q [h(\lambda, N)] \mu(\d \lambda). \]
\subsection{Space regularity}
\label{sec:space-reg}
\label{sec:regulairty-value-function}
We now study the regularity of the value function with respect to $y$ and $z$. Recall that
\[ v(t, y, z, n) = \inf_{\gamma \in \cA_t } J_1^\gamma(t, y, z, n) + J^\gamma_2(t, y, z, n), \]
with
\begin{align*}
	J_1^\gamma(t, y, z, n) &:=  \E_\Q \left[ L^{\gamma, y}_{t, T}(\Lambda)  \int_t^T \frac{\gamma^2_s}{2} \d s \right],\\
	J_2^\gamma(t, y, z, n) &:=  \E_\Q \left[ L^{\gamma, y}_{t, T}(\Lambda) \Psi_\mu(n+N_T-N_t, z + \int_t^T g(Y^{t, y, \gamma}_s) \d s)  \right],
\end{align*}
and  $\Lambda$ is a random variable with distribution $\P(\Lambda \in \d \lambda) = \lambda^n e^{-\lambda z} \mu(\d \lambda) / \Phi_\mu(n, z)$.
\begin{assumption}
	\label{ass:standing-assumptions}
	We assume the following:
	\begin{enumerate}
		\item The prior distribution is compactly supported: there exists $\lambda_{\max} > 0$ such that:
			\[ \text{Supp}(\mu) \subset [0, \lambda_{\max}]. \]
		\item The function $g \in C^1(\R; \R_+)$ satisfies $\norm{g}_\infty + \norm{\frac{g'}{g}}_\infty < \infty$.
		\item The function $b \in C^1(\R)$ satisfies $\norm{b'}_\infty < \infty$.
	\end{enumerate}
\end{assumption}
The main result of this section is the following:
\begin{proposition}
	\label{prop:regularity}
	Under Assumption~\ref{ass:standing-assumptions}, there exists a constant $C_T$ such that, for all $n \geq 0$, $y, y' \in \R$, and $z, z' \geq 0$:
	\[ \abs{v(t, y, z, n) - v(t, y', z', n)} \leq C_T( |y-y'| + |z-z'|). \]
\end{proposition}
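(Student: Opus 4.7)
The plan is to prove the estimate by comparing $J^{\gamma^\varepsilon}(t,y',z',n)$ with $J^{\gamma^\varepsilon}(t,y,z,n)$ for an $\varepsilon$-optimal $\gamma^\varepsilon \in \cA_t$ for $v(t,y,z,n)$, exploiting two preliminary facts: $v$ is uniformly bounded by $\kappa \lambda_{\max}^2/4$ (take $\gamma \equiv 0$ and apply the last lemma of Section~\ref{sec:reduction-finite-dim}), and near-optimality yields the energy estimate
\[
\E_\Q \bigl[ L^{\gamma^\varepsilon, y}_{t, T}(\Lambda) \int_t^T (\gamma^\varepsilon_s)^2\, \d s \bigr] \leq 2(v(t,y,z,n) + \varepsilon).
\]
Once I show $J^{\gamma^\varepsilon}(t,y',z',n) - J^{\gamma^\varepsilon}(t,y,z,n) \leq C_T(|y-y'|+|z-z'|)$ uniformly in $\gamma^\varepsilon$, the proposition follows by letting $\varepsilon \to 0$.

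The key ingredient is a pointwise $\Q$-a.s. multiplicative comparison of the exponential martingales:
\[
\bigl| L^{\gamma,y}_{t,T}(\lambda) - L^{\gamma,y'}_{t,T}(\lambda) \bigr| \leq C_\star \abs{y-y'}\, e^{C_\star \abs{y-y'}}\, L^{\gamma,y}_{t,T}(\lambda),
\]
uniformly in $\lambda \in [0, \lambda_{\max}]$ and $\gamma \in \cA_t$, with $C_\star$ depending only on $T$, $L_b := \norm{b'}_\infty$, $\norm{g'}_\infty$, $\norm{g'/g}_\infty$, and $\lambda_{\max}$. The proof relies on the fact that $Y^{t,y,\gamma}$ is reset to zero at each jump of $N$: writing $\tau_1$ for the first jump of $N$ after $t$, one has $Y^{t,y,\gamma}_s = Y^{t,y',\gamma}_s$ for $s \geq \tau_1$, while on $[t, \tau_1)$ Gronwall applied to $\dot Y = b(Y) + \gamma$ gives $\abs{Y^{t,y,\gamma}_s - Y^{t,y',\gamma}_s} \leq \abs{y-y'} e^{L_b T}$. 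Using $f_\lambda = \lambda g$, the log-ratio reduces to
\[
\log\!\frac{L^{\gamma,y}_{t,T}(\lambda)}{L^{\gamma,y'}_{t,T}(\lambda)} = \indica{\tau_1 \leq T}\, \log\!\frac{g(Y^{t,y,\gamma}_{\tau_1-})}{g(Y^{t,y',\gamma}_{\tau_1-})} - \lambda \int_t^{\tau_1 \wedge T}\!\bigl( g(Y^{t,y,\gamma}_u) - g(Y^{t,y',\gamma}_u) \bigr)\, \d u,
\]
which is bounded by $C_\star \abs{y-y'}$ under Assumption~\ref{ass:standing-assumptions}; the multiplicative estimate then follows from $\abs{e^x - 1} \leq \abs{x} e^{\abs{x}}$.

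To assemble the conclusion, I would split the variation into a $y$-change at fixed $z$ and a $z$-change at fixed $y$. The $y$-change is controlled by applying the multiplicative estimate inside $J_1$ (which is the delicate piece: the unbounded factor $\int (\gamma^\varepsilon_s)^2\, \d s$ is tamed by the energy estimate, since $\abs{L^y - L^{y'}}$ is dominated by a scalar times $L^y$) and inside $J_2$, together with the Lipschitz bound $\abs{\partial_z \Psi_\mu} \leq \lambda_{\max}^3/2$ and the estimate $\abs{Z^{t,y,z,\gamma}_T - Z^{t,y',z,\gamma}_T} \leq \norm{g'}_\infty T e^{L_b T} \abs{y-y'}$. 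The $z$-change involves both $\Psi_\mu(\cdot, Z^{y,z}_T)$ (Lipschitz in $z$) and the posterior $m_\mu(n,z)$; using $\partial_z m_\mu(n,z)(\d \lambda) = (\Xi_\mu(n,z) - \lambda)\, m_\mu(n,z)(\d \lambda)$ with $\abs{\Xi_\mu - \lambda} \leq \lambda_{\max}$, the relative change of $m_\mu$ is $O(\abs{z-z'})$, and the resulting absolute error is bounded by $J^{\gamma^\varepsilon}(t,y,z,n) \leq v + \varepsilon$. Putting everything together yields a bound of order $(\abs{y-y'} + \abs{z-z'})\, e^{C(\abs{y-y'}+\abs{z-z'})}$, which is Lipschitz on a bounded region and extends globally via the uniform bound on $v$. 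The main obstacle is precisely the unboundedness of admissible controls: a standard pathwise Lipschitz argument is unavailable because $\int_t^T (\gamma^\varepsilon_s)^2\, \d s$ is not bounded in $L^\infty(\Q)$. The multiplicative form of the estimate circumvents this by always pairing the cost integral with $L^y$, for which near-optimality provides the required uniform energy bound.
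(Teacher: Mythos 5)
Your proposal is correct and follows essentially the same route as the paper: the same decomposition into $J_1$ and $J_2$, the same multiplicative estimate $\abs{L^{\gamma,y}_{t,T}-L^{\gamma,y'}_{t,T}}\leq C|y-y'|L^{\gamma,y}_{t,T}$ exploiting the reset of $Y$ at the first jump, the same energy bound from $\varepsilon$-optimality to tame the unbounded cost integral, the same relative control of $m_\mu(n,z)$ in $z$, and the same final passage via the uniform bound $v\leq\kappa\lambda_{\max}^2/4$. The only cosmetic difference is that you bound the log-ratio of the likelihoods directly, whereas the paper differentiates $L^{y,\gamma}_T$ in $y$ and applies Grönwall; both yield the same estimate.
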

We now provide the proof of this result. To simplify notation, we assume $t = 0$ and write:
\[ L^{y, \gamma}_T(\lambda) = L^{y, \gamma}_{t, T}(\lambda). \]
\subsubsection{Regularity with respect to \texorpdfstring{$y$}{y}}
\begin{lemma}
	 There exists a constant $C_T$ such that, for all $\gamma \in \cA$:
	\[ \forall y \in \R, \quad \abs{\partial_y L^{y, \gamma}_T(\lambda)} \leq C_T L^{y, \gamma}_{T}(\lambda).  \]
\end{lemma}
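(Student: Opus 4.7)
The plan is to take the logarithmic derivative of $L^{y,\gamma}_T(\lambda)$ in $y$ and exploit the crucial pathwise observation that, on the canonical space, the jump times $(\tau_k)$ of $N$ do not depend on $y$, and each jump of the state SDE resets $Y$ to zero. Consequently, the flow forgets its initial condition $y$ after the first jump $\tau_1$, so $\partial_y Y^{y,\gamma}$ is supported on $[0,\tau_1)$ and is easily controlled.

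Under Assumption~\ref{ass:standing-assumptions}, with $f_\lambda = \lambda g$,
\[ \log L^{y,\gamma}_T(\lambda) = N_T \log \lambda + \sum_{0 < \tau_k \leq T} \log g\bigl( Y^{y,\gamma}_{\tau_k-} \bigr) + T - \lambda \int_0^T g\bigl( Y^{y,\gamma}_u \bigr) \, \d u, \]
so formal differentiation in $y$ yields
\[ \frac{\partial_y L^{y,\gamma}_T(\lambda)}{L^{y,\gamma}_T(\lambda)} = \sum_{0 < \tau_k \leq T} \frac{g'}{g}\bigl( Y^{y,\gamma}_{\tau_k-} \bigr) \partial_y Y^{y,\gamma}_{\tau_k-} - \lambda \int_0^T g'\bigl( Y^{y,\gamma}_u \bigr) \partial_y Y^{y,\gamma}_u \, \d u. \]
Between consecutive jumps, $Y^{y,\gamma}$ obeys the ODE $\dot{Y} = b(Y) + \gamma$, and the variational equation for $s \mapsto \partial_y Y^{y,\gamma}_s$ is the linear ODE $\partial_s \partial_y Y^{y,\gamma}_s = b'(Y^{y,\gamma}_s)\,\partial_y Y^{y,\gamma}_s$; at the jump time $\tau_1$ the state is reset to $0$ independently of $y$. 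Therefore $\partial_y Y^{y,\gamma}_s = 0$ for all $s \geq \tau_1$, while for $s < \tau_1$,
\[ \partial_y Y^{y,\gamma}_s = \exp\!\left( \int_0^s b'(Y^{y,\gamma}_u) \, \d u \right), \qquad \abs{\partial_y Y^{y,\gamma}_s} \leq e^{T \norm{b'}_\infty}. \]

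Combining these, only the $k=1$ term contributes to the jump sum (when $\tau_1 \leq T$) and the integral reduces to $[0, \tau_1 \wedge T]$; using $\norm{g'/g}_\infty < \infty$, $\norm{g'}_\infty \leq \norm{g}_\infty \norm{g'/g}_\infty < \infty$, and $\lambda \leq \lambda_{\max}$, one obtains
\[ \left| \frac{\partial_y L^{y,\gamma}_T(\lambda)}{L^{y,\gamma}_T(\lambda)} \right| \leq \bigl( \norm{g'/g}_\infty + T \lambda_{\max} \norm{g'}_\infty \bigr) e^{T \norm{b'}_\infty} =: C_T, \]
which is the desired bound. The main technical point to justify carefully is the pathwise differentiability of the flow $y \mapsto Y^{y,\gamma}_s$ on each inter-jump interval; this is standard since, on the canonical space, $\gamma$ is $\cF^N$-predictable and therefore functionally independent of $y$. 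Although $\gamma$ is only in $L^2([0,T])$ and hence unbounded, this poses no obstacle here because $\gamma$ enters only as an additive forcing in the ODE and does not appear in the variational equation: only the bounded derivative $b'$ does.
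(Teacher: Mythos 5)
Your proof is correct and follows essentially the same route as the paper's: both rely on the observation that the reset to zero at the first jump makes $\partial_y Y^{y,\gamma}_s$ vanish for $s \geq \tau_1$, bound the flow derivative by $e^{T\norm{b'}_\infty}$ via the variational ODE, and arrive at the identical constant $C_T = \bigl(\norm{g'/g}_\infty + \lambda_{\max} T \norm{g'}_\infty\bigr) e^{T\norm{b'}_\infty}$. The paper merely phrases the computation as $\partial_y L = [\cdots]\, L$ split into the cases $N_T = 0$ and $N_T > 0$ rather than as a logarithmic derivative, which is a cosmetic difference.
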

\begin{proof}
	We prove the result with $C_T := \left(\norm{\frac{g'}{g}}_\infty + \lambda_{\max} T \norm{g'}_\infty\right) e^{T \norm{b'}_\infty}$.
	Observe that, immediately after a jump, $Y$ is reset to zero, so the initial condition$y$ is forgotten.
	Let $\varphi_s(y)$ denote the solution to the ordinary differential equation $\frac{d}{d s} \varphi_s(y) = b(\varphi_s(y)) + \gamma_u $ with initial condition $\varphi_0(y) = y$.

	\textbf{Case 1: $N_T = 0$.} If there are no jumps, then:
	\[ \partial_y L^{y, \gamma}_T(\lambda) = - \lambda L^{y, \gamma}_T(\lambda) \int_0^T g'(\varphi_s(y)) \partial_y \varphi_s(y) \d s. \]
	Since $\partial_y \varphi_s(y) = \exp\left(\int_0^s b'(\varphi_u(y)) \d u \right) \leq \exp(T \norm{b'}_\infty)$, the result holds.

	\textbf{Case 2: $N_T > 0$.} Let $\tau_1$ denote the time of the first jump of $N$. Then:
	\[ \partial_y L^{y, \gamma}_T(\lambda) = \left[ \frac{g'(\varphi_{\tau_1-}(y)) \partial_y \varphi_{\tau_1-}(y)  }{ g(\varphi_{\tau_1-}(y))} - \lambda \int_0^{\tau_1} g'(\varphi_s(y)) \partial_y \varphi_s(y) \d s \right] L^{y, \gamma}_T(\lambda).  \]
	The result follows from the assumptions on $g$ and $b$. 
\end{proof}
\begin{lemma}
	There exists a constant $C_T$ such that, for all $\gamma \in \cA$:
\[ \forall y, y' \in \R, \quad  L^{y', \gamma}_T(\lambda) \leq e^{C_T |y-y'|} L^{y, \gamma}_T(\lambda). \]
\end{lemma}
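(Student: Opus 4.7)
The plan is to simply integrate the pointwise derivative bound from the preceding lemma, leveraging the fact that $L^{y,\gamma}_T(\lambda)$ never vanishes for $\lambda>0$.

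First I would note that Assumption~\ref{ass:standing-assumptions} implicitly forces $g>0$ everywhere on $\R$ (otherwise $\|g'/g\|_\infty$ could not be finite). Consequently, the explicit formula
\[ L^{y, \gamma}_T(\lambda) = \prod_{0<\tau_k \leq T} \lambda g(Y^{y,\gamma}_{\tau_k-}) \cdot \exp\!\left(T - \lambda \int_0^T g(Y^{y,\gamma}_u)\,\d u\right) \]
shows that for every fixed $\lambda > 0$, every $\omega \in \Omega$, and every $\gamma \in \cA$, the map $y \mapsto L^{y, \gamma}_T(\lambda)$ is strictly positive and $C^1$; its $C^1$ regularity was already exploited in the preceding lemma.

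Next, the previous lemma yields $|\partial_y \log L^{y, \gamma}_T(\lambda)| \leq C_T$ uniformly in $y$. Fix $y, y' \in \R$ with $y \leq y'$. Integrating this bound between $y$ and $y'$ gives
\[ \log L^{y', \gamma}_T(\lambda) - \log L^{y, \gamma}_T(\lambda) = \int_y^{y'} \partial_u \log L^{u, \gamma}_T(\lambda) \, \d u \leq C_T |y'-y|, \]
and exponentiating yields the stated inequality. The symmetric case $y' \leq y$ is identical. Finally, for $\lambda = 0$ one has $L^{y, \gamma}_T(0) = e^T \indic{N_T = 0}$, which does not depend on $y$, so the inequality is trivial.

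I don't anticipate any real obstacle here: the entire argument is an application of the fundamental theorem of calculus to the estimate established just above. The only minor subtlety worth stating explicitly is the positivity of $L^{y,\gamma}_T(\lambda)$ (which justifies passing to the logarithm), and this is immediate from the assumption $\|g'/g\|_\infty < \infty$.
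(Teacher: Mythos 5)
Your argument is correct and is essentially the paper's own proof: both derive the two-sided bound by integrating the differential inequality $\abs{\partial_y L^{y,\gamma}_T(\lambda)} \leq C_T L^{y,\gamma}_T(\lambda)$ from the preceding lemma (the paper phrases this as Grönwall's lemma applied in each direction, you phrase it as the fundamental theorem of calculus applied to $\log L$). Your explicit remarks on positivity of $L$ and the degenerate case $\lambda=0$ are correct and, if anything, slightly more careful than the paper.
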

\begin{proof}
	Assume first that $y' > y$. The result follows by applying Grönwall's lemma. If now $y' < y$, define $\phi(s) = L^{y-s, \gamma}_T(\lambda)$. Then: 
	\[
\phi'(s) = - \partial_y L^{y-s, \gamma}_T(\lambda) \leq C_T \phi(s).
\]
By Grönwall's lemma, $\phi(s) \leq e^{C_T s} \phi(0)$. Choosing $s = y-y'$ completes the proof. 
\end{proof}
\begin{lemma}
	There is another constant $C_T$ such that, for all $\gamma \in \cA$, and for all $y, y'$ with $|y-y'| \leq 1$:
	\[ \abs{L^{y, \gamma}_T(\lambda) - L^{y', \gamma}_T(\lambda)} \leq C_T |y-y'| (L^{y, \gamma}_T(\lambda) + L^{y', \gamma}_T(\lambda)). \]
\end{lemma}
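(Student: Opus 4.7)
The proof plan is a direct consequence of the previous two lemmas, using the fundamental theorem of calculus.

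First I would assume without loss of generality that $y' \geq y$ (the case $y' < y$ is symmetric). Since $\lambda \mapsto L^{y,\gamma}_T(\lambda)$ is smooth in $y$ between the jumps of $N$ (the dependence on $y$ enters only through $\varphi_{\tau_1-}(y)$ before the first jump, and after that initial condition is erased), I can apply the fundamental theorem of calculus to write
\[ L^{y',\gamma}_T(\lambda) - L^{y,\gamma}_T(\lambda) = \int_y^{y'} \partial_u L^{u,\gamma}_T(\lambda) \, \d u. \]

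Next I would bound the integrand. By the first lemma of this subsection, $|\partial_u L^{u,\gamma}_T(\lambda)| \leq C_T L^{u,\gamma}_T(\lambda)$ for some constant $C_T$ independent of $\gamma$ and $u$. By the second lemma, $L^{u,\gamma}_T(\lambda) \leq e^{C_T|u-y|} L^{y,\gamma}_T(\lambda) \leq e^{C_T|y-y'|} L^{y,\gamma}_T(\lambda)$ for every $u$ between $y$ and $y'$. Combining these two estimates yields
\[ |L^{y',\gamma}_T(\lambda) - L^{y,\gamma}_T(\lambda)| \leq C_T \, e^{C_T |y-y'|} \, |y-y'| \, L^{y,\gamma}_T(\lambda). \]
Using the restriction $|y-y'| \leq 1$, the factor $e^{C_T|y-y'|}$ is bounded by $e^{C_T}$, which can be absorbed into a new constant $C_T'$. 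The same argument with the roles of $y$ and $y'$ swapped gives the analogous bound in terms of $L^{y',\gamma}_T(\lambda)$. Averaging or taking the minimum of the two estimates produces the desired inequality with $L^{y,\gamma}_T(\lambda) + L^{y',\gamma}_T(\lambda)$ on the right-hand side.

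There is no real obstacle, because the hard work (controlling $\partial_y L^{y,\gamma}_T(\lambda)$ uniformly in $\gamma$ despite $\gamma$ being possibly unbounded, and transferring the bound from one initial condition to the other) has already been carried out in the two preceding lemmas; the present statement is essentially a packaging of those two estimates into the symmetric form needed for the later Lipschitz estimates on the value function.
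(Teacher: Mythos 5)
Your proposal is correct and follows essentially the same route as the paper: both arguments reduce the claim to the two preceding lemmas, the only cosmetic difference being that you integrate the derivative bound $\abs{\partial_u L^{u,\gamma}_T(\lambda)} \leq C_T L^{u,\gamma}_T(\lambda)$ via the fundamental theorem of calculus, whereas the paper applies the comparison $L^{y',\gamma}_T(\lambda) \leq e^{C_T\abs{y-y'}}L^{y,\gamma}_T(\lambda)$ directly together with $e^x - 1 \leq x e^x$. Both yield the one-sided bound $C_T e^{C_T}\abs{y-y'}L^{y,\gamma}_T(\lambda)$ and conclude by symmetry, so no gap.
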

\begin{proof} Without loss of generality, assume $y < y' < y+1$. Then:
	\[
L^{y', \gamma}_T(\lambda) - L^{y, \gamma}_T(\lambda) \leq (e^{C_T(y'-y)} - 1) L^{y, \gamma}_T(\lambda) \leq C_Te^{C_T} |y-y'| L^{y, \gamma}_T(\lambda).\]
Similarly:
\[
L^{y, \gamma}_T(\lambda) - L^{y', \gamma}_T(\lambda) \leq C_T e^{C_T} |y-y'| L^{y', \gamma}_T(\lambda).
\]
Combining these, the result follows.
\end{proof}
\begin{corollary}
	Since $L^{y', \gamma}_T(\lambda) \leq e^{C_T} L^{y, \gamma}_T(\lambda)$, there exists another constant
 $C_T$ such that, for all $y, y'$ with $|y-y'| \leq 1$,
	\[ \abs{L^{y, \gamma}_T(\lambda) - L^{y', \gamma}_T(\lambda)} \leq C_T |y-y'| L^{y, \gamma}_T(\lambda). \]
\end{corollary}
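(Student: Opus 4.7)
The plan is to simply combine the two immediately preceding lemmas. From the lemma that controls one-sided growth, applied with $|y - y'| \leq 1$, I obtain the pointwise domination
\[ L^{y', \gamma}_T(\lambda) \leq e^{C_T |y-y'|} L^{y, \gamma}_T(\lambda) \leq e^{C_T} L^{y, \gamma}_T(\lambda), \]
so the Radon–Nikodym-type ratio between the two likelihoods is uniformly controlled on the unit-ball scale in $y$.

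Next, I invoke the previous lemma, which gives
\[ \abs{L^{y, \gamma}_T(\lambda) - L^{y', \gamma}_T(\lambda)} \leq C_T |y-y'| \bigl(L^{y, \gamma}_T(\lambda) + L^{y', \gamma}_T(\lambda)\bigr), \]
again valid as soon as $|y-y'| \leq 1$. Substituting the first bound into the second factor on the right, I get
\[ \abs{L^{y, \gamma}_T(\lambda) - L^{y', \gamma}_T(\lambda)} \leq C_T |y-y'| \bigl(1 + e^{C_T}\bigr) L^{y, \gamma}_T(\lambda). \]
Redefining the constant as $C_T' := C_T(1 + e^{C_T})$, which depends only on $T$, $\lambda_{\max}$, $\|b'\|_\infty$, $\|g'\|_\infty$ and $\|g'/g\|_\infty$ (and not on $\gamma$, $y$, $y'$ or $\lambda$), yields the claim.

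There is no real obstacle here; the only subtlety worth flagging is that the constant $C_T$ appearing in the two lemmas has been produced by the same construction (through Grönwall), so the exponential factor $e^{C_T}$ is finite, and one may re-label the resulting constant to arrive at a single $C_T$ as in the statement. The asymmetric form of the bound (only $L^{y,\gamma}_T(\lambda)$ on the right-hand side, not the symmetric sum) is precisely what will be needed in the subsequent estimates on $J_1^\gamma$ and $J_2^\gamma$, since it permits estimating differences of expectations under the common reference measure $L^{y,\gamma}_T(\lambda)\mu(d\lambda)$ without having to introduce $L^{y',\gamma}_T(\lambda)$.
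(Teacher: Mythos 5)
Your argument is correct and is precisely the one the paper intends: the corollary's own ``Since $L^{y',\gamma}_T(\lambda) \leq e^{C_T} L^{y,\gamma}_T(\lambda)$'' clause is exactly your substitution of the one-sided domination bound into the symmetric estimate of the preceding lemma, absorbing the factor $1+e^{C_T}$ into a new constant. Nothing further is needed.
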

\begin{lemma}
	There exists a constant $C_T$ such that, for all $\gamma \in \cA$, and for all $y, y'$ with $|y-y'| \leq 1$: 
	\begin{align*} \abs{J^\gamma_1(t, y', z, n) - J_1^\gamma(t, y, z, n)} &\leq C_T |y-y'| J^\gamma_1(t, y, z, n), \\
	\abs{J_2^\gamma(t, y', z, n) - J_2^\gamma(t, y, z, n)} &\leq C_T |y-y'|(1+J^\gamma_2(t, y, z, n)). \end{align*}
\end{lemma}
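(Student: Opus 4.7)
For $J_1^\gamma$, the control integral $\int_t^T \gamma_s^2\,\d s$ does not depend on $y$, so the difference $J_1^\gamma(t,y',z,n) - J_1^\gamma(t,y,z,n)$ reduces to $\E_\Q\bigl[\bigl(L^{y',\gamma}_T(\Lambda) - L^{y,\gamma}_T(\Lambda)\bigr)\int_t^T\gamma_s^2\,\d s\bigr]$. Applying the previous corollary pointwise in $\lambda$ yields $|L^{y',\gamma}_T(\lambda) - L^{y,\gamma}_T(\lambda)| \leq C_T|y-y'| L^{y,\gamma}_T(\lambda)$, which after integration immediately gives the first inequality with the multiplicative constant $C_T|y-y'|$.

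For $J_2^\gamma$, let $Z^{y}_T := z + \int_t^T g(Y^{t,y,\gamma}_s)\,\d s$ and decompose
\[
J_2^\gamma(t,y',z,n) - J_2^\gamma(t,y,z,n) = \E_\Q\bigl[(L^{y',\gamma}_T(\Lambda) - L^{y,\gamma}_T(\Lambda))\,\Psi_\mu(n+N_T-N_t, Z^{y'}_T)\bigr] + \E_\Q\bigl[L^{y,\gamma}_T(\Lambda)\,(\Psi_\mu(n+N_T-N_t, Z^{y'}_T) - \Psi_\mu(n+N_T-N_t, Z^{y}_T))\bigr].
\]
For the first term I bound $\Psi_\mu \leq \lambda_{\max}^2/4$ by the previous lemma and apply the corollary, producing an upper bound $C_T|y-y'|\,\E_\Q L^{y,\gamma}_T(\Lambda) = C_T|y-y'|$ (which contributes the ``$+1$'' in the claimed bound). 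For the second term I use $|\partial_z \Psi_\mu| \leq \tfrac{1}{2}\lambda_{\max}^3$ to get $|\Psi_\mu(\cdot,Z^{y'}_T) - \Psi_\mu(\cdot,Z^{y}_T)| \leq \tfrac{1}{2}\lambda_{\max}^3 \norm{g'}_\infty \int_t^T|Y^{t,y',\gamma}_s - Y^{t,y,\gamma}_s|\,\d s$.

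The key observation — and the only nontrivial point — is the control of $|Y^{t,y',\gamma}_s - Y^{t,y,\gamma}_s|$. Under $\Q$ the Poisson process $(N_t)$ is independent of $y$, so both trajectories share the same jump times $(\tau_k)$. Between two consecutive jumps, $Y^{t,y,\gamma}$ and $Y^{t,y',\gamma}$ solve the same ODE $\dot w = b(w) + \gamma$, and at each jump time they are both reset to $0$. Consequently, they coincide after the first jump $\tau_1$ that falls in $(t,T]$, and before that time Grönwall's inequality yields $|Y^{t,y',\gamma}_s - Y^{t,y,\gamma}_s| \leq e^{T\norm{b'}_\infty}|y-y'|$. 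This gives $\int_t^T|Y^{t,y',\gamma}_s - Y^{t,y,\gamma}_s|\,\d s \leq (\tau_1 \wedge T - t) e^{T\norm{b'}_\infty}|y-y'| \leq T e^{T\norm{b'}_\infty}|y-y'|$, so the second term is bounded by $C_T|y-y'|\,\E_\Q L^{y,\gamma}_T(\Lambda) = C_T|y-y'|$.

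Adding the two contributions yields an estimate of the form $C_T|y-y'|$, which implies the stated bound $C_T|y-y'|(1 + J_2^\gamma(t,y,z,n))$. The main subtlety, which I expect to be the crux, is exploiting the reset-to-zero mechanism after a jump to prevent the discrepancy $|Y^{y'} - Y^y|$ from growing beyond the first jump; this is exactly what makes the estimate uniform in the control $\gamma$ despite $\gamma$ being unbounded.
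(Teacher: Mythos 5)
Your proof is correct and follows essentially the same route as the paper: the same pointwise estimate $|L^{y',\gamma}_T(\lambda)-L^{y,\gamma}_T(\lambda)|\leq C_T|y-y'|L^{y,\gamma}_T(\lambda)$ for $J_1^\gamma$, and the same two-term decomposition for $J_2^\gamma$ combined with the bounds on $\Psi_\mu$, $\partial_z\Psi_\mu$ and the Gr\"onwall/reset estimate $|Y^{t,y',\gamma}_s-Y^{t,y,\gamma}_s|\leq e^{T\norm{b'}_\infty}|y-y'|$. The only (harmless) deviation is that you bound $\Psi_\mu$ by $\lambda_{\max}^2/4$ in the first term, yielding $C_T|y-y'|$ outright where the paper retains the factor $J_2^\gamma(t,y,z,n)$; both imply the stated inequality.
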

\begin{proof}
	For the first inequality:
	\begin{align*}  \abs{J^\gamma_1(t, y', z, n) - J^\gamma_1(t, y, z, n)} & \leq \E_\Q \left[ \abs{L^{y', \gamma}_{t, T}(\Lambda) - L^{y, \gamma}_{t, T}(\Lambda)} \int_t^T \frac{\gamma_s^2}{2} \d s \right] \\
		& \leq C_T |y-y'| \E_\Q \left[ L^{y, \gamma}_{t, T}(\Lambda) \int_t^T \frac{\gamma_s^2}{2} \d s \right] = C_T |y-y'| J^\gamma_1(t, y, z, n). 
	\end{align*}
	For the second inequality: 
	\begin{align*}
	& \abs{J^\gamma_2(t, y, z, n) - J^\gamma_2(t, y', z, n)} \leq \\
	& \quad \E_\Q \abs{L^{\gamma, y}_T(\Lambda) - L^{\gamma, y'}_T(\Lambda)} \Psi_\mu(n+N_T-N_t, Z^{t, y, z, \gamma}_T) \\ 
	&  \quad + \E_\Q \left[ L^{\gamma, y'}_T(\Lambda) \left( \Psi_\mu(n+N_T-N_t, Z^{t, y, z, \gamma}_T) - \Psi_\mu(n+N_T-N_t, Z^{t, y', z, \gamma}_T) \right)  \right]  =: A+B.
	\end{align*}
	The term $A$ satisfies:
	\[
		A \leq C_T |y-y'| \E_\Q L^{\gamma, y}_{t, T}(\Lambda) \Psi_\mu(n+N_T-N_t, Z^{t, y, z, \gamma}_T) = C_T|y-y'| J^\gamma_2(t, y, z, n).
\]
 The term $B$ satisfies:
\begin{align*}  B & \leq \E_\Q  L^{y', \gamma}_{t, T}(\Lambda) \norm{\partial_z \Psi_\mu(n+N_T-N_t, \cdot)}_\infty \int_t^T \norm{g'}_\infty e^{\norm{b'}_\infty T} |y-y'| \d s \\
	& \leq C_T \E_\Q L^{y, \gamma}_{t, T}(\Lambda)  |y-y'|  = C_T |y-y'|.
\end{align*}
	Here, we used that $\sup_{n \in \N} \norm{\partial_z \Psi_\mu(n, \cdot)} < \infty$. Combining these, the result follows.
\end{proof}
Combining these results, we deduce that:
\[ \abs{J^\gamma(t, y, z, n) - J^\gamma(t, y', z, n)} \leq C_T (1+J^\gamma(t, y, z, n)) |y-y'|. \]
Let $y,y' \in \R$ be fixed. Consider $\epsilon > 0$ and an $\epsilon$-optimal control $\gamma$ such that:
\[ v(t, y, z, n) \geq J^\gamma(t, y, z, n) - \epsilon. \]
Then:
\begin{align*}
	v(t, y', z, n) - v(t, y, z, n) & \leq J^\gamma(t, y', z, n) - J^\gamma(t, y, z, n) + \epsilon \\
	& \leq C_T |y-y'| (1+J^\gamma(t, t, z, n)) + \epsilon \\
	& \leq C_T |y-y'| (1 + \epsilon + v(t, y, z, n)) + \epsilon. 
\end{align*}
Letting $\epsilon \downarrow 0$, we obtain:
\[ v(t, y', z, n) - v(t, y, z, n) \leq C_T |y'-y| (1+v(t, y, z, n)). \]
Additionally, note that:
\[ v(t, y, z, n) \leq J^{0}(t, y, z, n) \leq \frac{\lambda^2_{\max}}{4} < \infty.\]
By exchanging the roles of $y$ and $y'$, we conclude:
\[ \abs{  v(t, y, z, n) - v(t, y', z, n) } \leq C_T |y-y'|. \] \qed

\subsubsection{Regularity with respect to \texorpdfstring{$z$}{z}}
The regularity with respect to $z$ is established using similar arguments. Recall that:
\[ \Phi_\mu(n, z) = \int_{\R_+} \lambda^n e^{-\lambda z} \mu(\d \lambda), \quad m_\mu(n, z)  =  \frac{\lambda^n e^{-\lambda z}}{ \Phi_\mu(n, z) }\mu(\d \lambda). \]
\begin{lemma}
	There exist constants $C, \eta > 0$ such that, for all $n \in \N$:
	\begin{enumerate}
		\item For all $z, z' \geq 0$, $\abs{\Phi_\mu(n, z) -\Phi_\mu(n, z')} \leq C |z-z'| \Phi_\mu(n, z)$.
		\item For all $z, z' \geq 0$, $\abs{m_\mu(n,z) - m_\mu(n, z')} \leq C |z-z'| [ m_\mu(n, z) + m_\mu(n, z')]$.
		\item For all $z, z' \geq 0$ with $|z-z'| < \eta$, $\abs{m_\mu(n,z) - m_\mu(n, z')} \leq C |z-z'| m_\mu(n, z)$.
	\end{enumerate}
\end{lemma}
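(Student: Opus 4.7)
The plan is to mirror the strategy of the $y$-regularity lemmas in Section~\ref{sec:space-reg}: exploit the compact support of $\mu$ to obtain a uniform bound on the logarithmic $z$-derivatives of $\Phi_\mu(n,z)$ and of the density of $m_\mu(n,z)$, and then integrate.

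For Part 1, the starting point is Lemma~\ref{lemme:reg_post}(1), which gives $\partial_z \Phi_\mu(n,z) = -\Phi_\mu(n+1,z)$. Since $\text{Supp}(\mu)\subset[0,\lambda_{\max}]$, the ratio $\Phi_\mu(n+1,z)/\Phi_\mu(n,z) = \Xi_\mu(n,z)$ is bounded by $\lambda_{\max}$, so $|\partial_z \log \Phi_\mu(n,z)| \leq \lambda_{\max}$. Integrating between $z$ and $z'$ yields $\Phi_\mu(n,z')/\Phi_\mu(n,z) \in [e^{-\lambda_{\max}|z-z'|}, e^{\lambda_{\max}|z-z'|}]$, and the estimate in Part 1 follows from the elementary inequality $|e^{a}-1|\leq |a|e^{|a|}$, with $C$ depending (as in the analogous $y$-lemma) on an a priori upper bound for $|z-z'|$.

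For Parts 2 and 3 I would interpret the measure inequalities as pointwise bounds on the density $p_z(\lambda) := \lambda^n e^{-\lambda z}/\Phi_\mu(n,z)$ of $m_\mu(n,z)$ with respect to $\mu$. A direct computation using Lemma~\ref{lemme:reg_post}(1) gives
\[ \partial_z \log p_z(\lambda) = -\lambda - \partial_z \log \Phi_\mu(n,z) = \Xi_\mu(n,z) - \lambda. \]
Since both $\lambda$ and $\Xi_\mu(n,z)$ belong to $[0,\lambda_{\max}]$, this logarithmic derivative is bounded by $\lambda_{\max}$ in absolute value, uniformly in $n$, $z$ and in $\lambda \in \text{Supp}(\mu)$. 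Integrating in $z$ gives $p_z(\lambda)/p_{z'}(\lambda) \in [e^{-\lambda_{\max}|z-z'|}, e^{\lambda_{\max}|z-z'|}]$ for every $\lambda$ in the support of $\mu$, from which Part 2 follows by the same symmetrization argument used in the $y$-regularity proof (split into the cases $p_z \geq p_{z'}$ and the reverse, then apply $|e^a-1|\leq|a|e^{|a|}$). For Part 3 I would pick $\eta := (\log 2)/\lambda_{\max}$; for $|z-z'| < \eta$ this gives $p_{z'}(\lambda) \leq 2\,p_z(\lambda)$ pointwise, so the $m_\mu(n,z')$ contribution on the right-hand side of Part 2 can be absorbed into $m_\mu(n,z)$, yielding the one-sided bound.

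I do not expect any serious obstacle: the compact-support hypothesis keeps $\lambda$ and $\Xi_\mu(n,z)$ inside the fixed interval $[0,\lambda_{\max}]$, so the whole argument reduces to integrating a uniformly bounded logarithmic derivative. The only delicate point is to verify that $C$ and $\eta$ are independent of $n$, which is automatic since the bounds only use the support of $\mu$.
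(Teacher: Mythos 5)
Your proof is correct and rests on the same two ingredients as the paper's: the compact support of $\mu$ (so that $\lambda$ and $\Xi_\mu(n,z)$ stay in $[0,\lambda_{\max}]$) and the resulting comparability of $\Phi_\mu(n,z)$ and $\Phi_\mu(n,z')$. The organization differs slightly. The paper estimates the density difference \emph{additively}, splitting $m_\mu(n,z)-m_\mu(n,z')$ into a term coming from the change of normalization $\Phi_\mu(n,\cdot)$ and a term coming from the change of the exponential $e^{-\lambda z}$, each controlled via $\abs{1-e^{-\lambda(z'-z)}}\leq \lambda_{\max}\abs{z-z'}e^{\lambda_{\max}\abs{z-z'}}$ on the support. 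You instead bound the \emph{multiplicative} ratio of the densities by integrating the logarithmic derivative $\partial_z\log p_z(\lambda)=\Xi_\mu(n,z)-\lambda\in[-\lambda_{\max},\lambda_{\max}]$, and then convert the two-sided ratio bound into Parts 2 and 3 exactly as in the $y$-regularity lemmas. The ratio route makes Part 3 immediate (with the explicit $\eta=(\log 2)/\lambda_{\max}$, whereas the paper only asserts the existence of a sufficiently small $\eta$), and it makes explicit a caveat the paper leaves implicit: the constant $C$ is uniform only on sets where $\abs{z-z'}$ is bounded (for $z'<z$ one picks up a factor $e^{\lambda_{\max}\abs{z-z'}}$ in Part 1), which is harmless in the application since $Z^{t,y,z,\gamma}_T-z\leq T\norm{g}_\infty$. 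One cosmetic point: for $\lambda$ with $\lambda^n=0$ (e.g.\ an atom of $\mu$ at $0$ with $n\geq 1$) the logarithm of the density is undefined, but the ratio identity $p_z(\lambda)/p_{z'}(\lambda)=e^{-\lambda(z-z')}\,\Phi_\mu(n,z')/\Phi_\mu(n,z)$ holds directly, so nothing is lost.
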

\begin{proof}
	For the first point:
	\[ \abs{\Phi_\mu(n, z) - \Phi_\mu(n, z')} \leq \int_{\R_+} \lambda^n e^{-\lambda z} \abs{1 - e^{-\lambda (z' - z)}} \mu(\d \lambda). \]
Since $Supp(\mu) \subset [0, \lambda_{\max}]$, the result follows. For the second point:
\begin{align*}
	\abs{ m_\mu(n, z) - m_\mu(n, z')} \leq  \left[  \frac{ \abs{\Phi_\mu(n, z') - \Phi_\mu(n, z)} e^{-\lambda z}}{ \Phi_\mu(n, z) \Phi_\mu(n, z') } + \frac{e^{-\lambda z'}(e^{-\lambda (z-z')}  - 1)}{ \Phi_{\mu}(n, z')} \right] \lambda^n \mu(\d \lambda).
\end{align*}
Using the first point, we deduce that the second inequality holds. For the third point, we verify that, for $|z-z'| \leq \eta$, with $\eta$ sufficiently small:
\[ m_\mu(n, z') \leq C m_\mu(n, z). \]
 Indeed:
\[ m_\mu(n, z') = \frac{ \lambda^n e^{-\lambda z} e^{-\lambda (z'-z)} \mu(\d \lambda)}{ \Phi_\mu(n, z') - \Phi_\mu(n, z) + \Phi_\mu(n, z)}. \]
Since $\abs{\Phi_\mu(n, z') - \Phi_\mu(n, z)} \leq \frac{1}{2} \Phi_\mu(n, z)$ for $\abs{z-z'}$ sufficiently small, the result follows.
\end{proof}
Since under $\Q$, the Poisson process $N$ is independent of $\Lambda$, we have:
\[ J^\gamma_1(t, y, z, n) = \int_{\R_+ } \left[ \E_\Q L^{y, \gamma}_{t, T}(\lambda) \int_t^T \frac{\gamma^2_s}{2} \d s \right]  m_\mu(n, z) (\d \lambda).  \]
From the third point of the previous lemma, for all $z, z' \geq 0$ with $\abs{z-z'} \leq \eta$, it follows that:
\[ \abs{ J^\gamma_1(t, y, z, n) - J^\gamma_1(t, y, z', n) } \leq  C_T \abs{z-z'} J^\gamma_1(t, y, z, n).  \]
Similarly:
\[ \abs{J^\gamma_2(t, y, z, n) - J^\gamma_2(t, y, z', n) } \leq C_T |z-z'|(1 + J^\gamma_2(t, y, z, n)). \]
Thus, as previously, we deduce:
\[ \abs{v(t, y, z, n)- v(t, y, z', n)} \leq C_T \abs{z-z'}. \]
This completes the proof of Proposition~\ref{prop:regularity}. 
\subsection{A first dynamic programming principle}
\label{sec:PPD-deterministic-times}
Recall that the value function is defined by:
\begin{align*}
 v(t, y, z, n) &:= \inf_{\gamma \in \cA_t} \E_\Q \langle L^{y, \gamma}_{t, T}, m_\mu(n, z) \rangle  \left[ \int_t^T \frac{\gamma^2_u}{2} \d u + \kappa\Psi_\mu(n+N_T-N_t, Z_{T}^{t, y, z, \gamma}) \right] \\
	&= \inf_{\gamma \in \cA_t} J^\gamma(t,y,z,n).
\end{align*}
where $\Psi_{\mu}$ is defined in equation~$\eqref{eq:Psi_mu}$. In this section, we establish the following initial version of the dynamic programming principle, which we will later generalize to include $(\cF^N)$-stopping times. 
\begin{proposition}\label{prop:ppd}
	For all $s \in [t, T]$, it holds that:
	\begin{align*} v(t, y, z, n) = \inf_{\gamma \in \cA_t} & \E_\Q \langle L^{y, \gamma}_{t, s}, m_\mu(n, z)\rangle \left[ \int_t^s \frac{\gamma^2_u}{2} \d u + v(s, Y^{t, y, \gamma}_s, Z^{t, y, z, \gamma}_s, n + N_s - N_t)  \right]. 
	\end{align*}
\end{proposition}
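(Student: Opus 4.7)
The plan is to prove the two inequalities $v(t,y,z,n) \leq \text{RHS}$ and $v(t,y,z,n) \geq \text{RHS}$ separately, relying on a pathwise flow (cocycle) property for the triple $(Y,Z,L)$, the Bayesian identity~\eqref{eq:bayes-m-mu}, and the Lipschitz regularity of $v$ proved in Proposition~\ref{prop:regularity}.

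First I would establish the following decomposition. For any $\gamma \in \cA_t$ and $t \le s \le T$, solving~\eqref{eq:def-Y-indices}--\eqref{eq:def-Z-indices} jump by jump on $[t,s]$ and then on $[s,T]$ yields the flow identities $Y^{t,y,\gamma}_r = Y^{s,Y^{t,y,\gamma}_s,\gamma}_r$ and $Z^{t,y,z,\gamma}_r = Z^{s,Y^{t,y,\gamma}_s,Z^{t,y,z,\gamma}_s,\gamma}_r$ for $r \in [s,T]$, while the product form~\eqref{eq:def-L-indices} yields the cocycle $L^{y,\gamma}_{t,T}(\lambda) = L^{y,\gamma}_{t,s}(\lambda)\, L^{Y^{t,y,\gamma}_s,\gamma}_{s,T}(\lambda)$. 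Applying~\eqref{eq:bayes-m-mu} with the test function $\phi = L^{Y^{t,y,\gamma}_s,\gamma}_{s,T}$ then gives
\[
\langle L^{y,\gamma}_{t,T}, m_\mu(n,z)\rangle = \langle L^{y,\gamma}_{t,s}, m_\mu(n,z)\rangle\, \langle L^{Y^{t,y,\gamma}_s,\gamma}_{s,T}, m_\mu(n+N_s-N_t, Z^{t,y,z,\gamma}_s)\rangle.
\]
Since under $\Q$ the process $(N_r - N_s)_{r \ge s}$ is a standard Poisson process independent of $\cF^N_s$, conditioning on $\cF^N_s$ rewrites $J^\gamma(t,y,z,n)$ as
\[
\E_\Q \Bigl[ \langle L^{y,\gamma}_{t,s}, m_\mu(n,z)\rangle \Bigl( \int_t^s \tfrac{\gamma_u^2}{2}\, du + J^{\bar\gamma}\bigl(s, Y^{t,y,\gamma}_s, Z^{t,y,z,\gamma}_s, n+N_s-N_t\bigr) \Bigr) \Bigr],
\]
where $\bar\gamma$ denotes the $\omega$-dependent restriction-and-shift of $\gamma$ to $(s,T]$, which is $\Q$-a.s.\ an element of $\cA_s$.

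The lower bound $v(t,y,z,n) \ge \text{RHS}$ follows immediately by bounding $J^{\bar\gamma}(\cdot) \ge v(\cdot)$ and taking the infimum over $\gamma \in \cA_t$. For the upper bound, fix $\varepsilon > 0$ and for each state $(y',z',n')$ choose $\gamma^{\varepsilon, y', z', n'} \in \cA_s$ with $J^{\gamma^{\varepsilon, y', z', n'}}(s,y',z',n') \le v(s,y',z',n') + \varepsilon$. Partitioning $\R \times \R_+$ into countably many Borel cells of diameter $\delta$ (for each fixed $n \in \N$) and invoking the Lipschitz continuity of $v$ from Proposition~\ref{prop:regularity} together with the $\gamma$-uniform continuity estimates on $J^\gamma_1$ and $J^\gamma_2$ from Section~\ref{sec:regulairty-value-function}, I build a piecewise-constant, jointly measurable selector $(y',z',n') \mapsto \gamma^{\varepsilon, y', z', n'}$ that is $(\varepsilon + O(\delta))$-optimal. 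For any $\gamma_1 \in \cA_t$, the concatenation using $\gamma_1$ on $[t,s]$ and the selector evaluated at $(Y^{t,y,\gamma_1}_s, Z^{t,y,z,\gamma_1}_s, n+N_s-N_t)$ on $(s,T]$ belongs to $\cA_t$, and the decomposition above shows that its cost is bounded by the integrand of the RHS evaluated along $\gamma_1$ plus $\varepsilon + O(\delta)$. Taking the infimum over $\gamma_1$ and letting $\delta,\varepsilon \downarrow 0$ concludes.

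The main obstacle is the measurable selection step. Since admissible controls are predictable processes rather than values in a Polish space, a single $\varepsilon$-optimal control generally does not depend measurably on the state parameters. The Lipschitz estimates on $v$ and on $\gamma \mapsto J^\gamma$ established in Section~\ref{sec:regulairty-value-function} are precisely what allow me to bypass genuine measurable selection with a piecewise-constant approximation over a countable grid, paying only an extra error $O(\delta)$ that vanishes as $\delta \to 0$. Boundedness of $v$ (as $v \le J^0 \le \tfrac{\lambda_{\max}^2}{4}$) is also used to keep the error terms integrable.
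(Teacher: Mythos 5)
Your proposal is correct and follows essentially the same route as the paper: the cocycle/Bayes factorization of $\langle L^{y,\gamma}_{t,T}, m_\mu(n,z)\rangle$ combined with conditioning on $\cF^N_s$ (the paper's Lemma~\ref{lem:pseudo-markov-ppd}) yields the lower bound, and the upper bound is obtained exactly as you describe, via a countable partition of the state space, $\epsilon$-optimal controls chosen at the cell centers, and the uniform-in-$\gamma$ regularity estimates on $J^\gamma$ from Section~\ref{sec:space-reg}.
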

We now provide the proof of this result. Recall that for $\gamma \in \cA_t$, 
\begin{align*}
	J^\gamma_1(t, y, z, n) &= \E_\Q \left[ \langle L^{\gamma, y}_{t, T}, m_\mu(n, z) \rangle \int_t^T \frac{\gamma^2_u}{2} \d u \right], \\
	J^\gamma_2(t, y, z, n) &= \E_\Q \left[ \langle L^{\gamma, y}_{t, T}, m_\mu(n, z) \rangle \Psi_\mu(n+N_T-N_t, Z^{t, y, z, \gamma}_T ) \right].
\end{align*}

We start with the following key lemma. Given $\gamma \in \cA_t$, $\mathrm{w} \in \Omega$, and $s \geq t$, we set
\[ \gamma^{s, \mathrm{w}} := \omega \mapsto \gamma( \mathrm{w} \otimes_s \omega), \]
where for all $u \geq 0$:
\[  (\mathrm{w} \otimes_s \omega)(u) = \begin{cases} \mathrm{w}(u) \quad \text{ if } u \leq s \\ \mathrm{w}(s) + \omega(u) - \omega(s) \quad \text{ if } u > s. \end{cases} \]
\begin{lemma}
	\label{lem:pseudo-markov-ppd}
	Let $\gamma \in \cA_t$ and let $S$ be a $\cF^N$-stopping time such that $S \in [t, T]$. Then, $\Q(\d \omega)$ a.s.:
	\begin{align*}
	& \E_\Q \left[ \langle L^{y, \gamma}_{t, T}, m_\mu(n, z) \rangle  \int_{S}^T \frac{\gamma^2_u}{2} \d u  \mid\cF^N_S \right](\omega) \\
	& \quad=    \langle L^{y, \gamma^{S(\omega), \omega}}_{t, S(\omega)}, m_\mu(n, z) \rangle J^{\gamma^{S(\omega), \omega}}_1(S(\omega), Y^{t, y, \gamma^{S(\omega), \omega}}_{S(\omega)}, Z^{t, y, z, \gamma^{S(\omega), \omega}}_{S(\omega)}, n + N_{S(\omega)} - N_t),
	\end{align*}
	and:
	\begin{align*}
	& \E_\Q [ \langle L^{y, \gamma}_{t, T}, m_\mu(n, z) \rangle \Psi_\mu(n+N_T-N_t, Z^{t, y, z, \gamma}_T ) ~|~\cF^N_S ](\omega)  \\
	& \quad = \langle L^{y, \gamma^{S(\omega), \omega}}_{t, S(\omega)}, m_\mu(n, z) \rangle J^{\gamma^{S(\omega), \omega}}_2(S(\omega), Y^{t, y, \gamma^{S(\omega), \omega}}_{S(\omega)}, Z^{t, y, z, {\gamma^{S(\omega), \omega}}}_{S(\omega)}, n + N_{S(\omega)} - N_t).
	\end{align*}
\end{lemma}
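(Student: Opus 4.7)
The plan is to combine three structural facts: the pathwise multiplicative factorization of the likelihood $L^{y,\gamma}_{t,T}(\lambda)$ across the splitting time $S$, the Bayesian consistency relation~\eqref{eq:bayes-m-mu}, and the strong Markov property of the standard Poisson process $(N_t)$ under $\Q$.

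First I would establish the pathwise factorization. From the product-and-exponential formula~\eqref{eq:def-L-indices} and the fact that $Y^{t,y,\gamma}$ and $Z^{t,y,z,\gamma}$ satisfy the semigroup relations $Y^{t,y,\gamma}_u = Y^{S, Y^{t,y,\gamma}_S, \gamma}_u$ and $Z^{t,y,z,\gamma}_u = Z^{S, Y^{t,y,\gamma}_S, Z^{t,y,z,\gamma}_S,\gamma}_u$ for $u \geq S$, one obtains
\[ L^{y,\gamma}_{t,T}(\lambda) \;=\; L^{y,\gamma}_{t,S}(\lambda)\, L^{Y^{t,y,\gamma}_S,\gamma}_{S,T}(\lambda) \qquad \Q\text{-a.s.} \]
Integrating this identity against $m_\mu(n,z)(\d\lambda)$ and invoking Bayes' formula~\eqref{eq:bayes-m-mu} applied at time $S$ rewrites
\[ \langle L^{y,\gamma}_{t,T}, m_\mu(n,z)\rangle \;=\; \langle L^{y,\gamma}_{t,S}, m_\mu(n,z)\rangle \,\bigl\langle L^{Y^{t,y,\gamma}_S,\gamma}_{S,T},\, m_\mu(n+N_S-N_t,\, Z^{t,y,z,\gamma}_S)\bigr\rangle. \]
The prefactor $\langle L^{y,\gamma}_{t,S}, m_\mu(n,z)\rangle$ together with $Y^{t,y,\gamma}_S$, $Z^{t,y,z,\gamma}_S$ and $N_S - N_t$ is $\cF^N_S$-measurable, so it will pull out of the conditional expectation. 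The same factorization applies to the second functional, where the variance term $\Psi_\mu(n + N_T - N_t, Z^{t,y,z,\gamma}_T)$ depends only on post-$S$ data once the $\cF^N_S$-measurable shifts $n+N_S-N_t$ and $Z^{t,y,z,\gamma}_S$ are fixed.

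Next I would handle the conditional expectation via the strong Markov property. Since $(N_t)$ is a standard Poisson process under $\Q$, the shifted process $(N_{S+u} - N_S)_{u \geq 0}$ is independent of $\cF^N_S$ and has the same law as $(N_u)_{u \geq 0}$. Thus for any bounded measurable $H$ on the canonical space,
\[ \E_\Q[H(N) \mid \cF^N_S](\omega) \;=\; \E_\Q\bigl[H(\omega \otimes_{S(\omega)} N)\bigr] \qquad \Q\text{-a.s.} \]
Applied to $H$ equal to the post-$S$ part of $L^{y,\gamma}_{t,T}(\lambda)\int_S^T \gamma^2_u/2\,\d u$ (respectively, the variance term), this identity precisely replaces the control $\gamma$ evaluated on the concatenated path by $\gamma^{S(\omega),\omega}$, and the running objects $Y$, $Z$, $N-N_t$ restart from $Y^{t,y,\gamma^{S(\omega),\omega}}_{S(\omega)}$, $Z^{t,y,z,\gamma^{S(\omega),\omega}}_{S(\omega)}$ and $n + N_{S(\omega)} - N_t$, respectively. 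Combining this with the prefactor obtained above yields exactly $J^{\gamma^{S(\omega),\omega}}_1$ (resp. $J^{\gamma^{S(\omega),\omega}}_2$) evaluated at the shifted initial data.

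I expect the main obstacle to be verifying measurability and integrability: namely, that $(\omega,\tilde\omega) \mapsto \gamma(\omega \otimes_{S(\omega)} \tilde\omega)$ is jointly measurable so that Fubini applies, that $\gamma^{S(\omega),\omega} \in \cA_{S(\omega)}$ for $\Q$-almost every $\omega$ (which follows from the $\cF^N$-predictability of $\gamma$ and the translated filtration construction of Section~\ref{sec:the-value-function}), and that the swap between $\mu(\d\lambda)$ and the conditional expectation is legitimate. The uniform control $\E_\Q L^{y,\gamma}_{t,T}(\lambda) = 1$ together with the bound $\Psi_\mu \leq \lambda_{\max}^2/4$ and the a.s. finiteness of $\int_t^T \gamma^2_u \d u$ provides the integrability needed to justify these manipulations by dominated convergence on localizing sequences.
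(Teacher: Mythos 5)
Your proposal is correct and follows essentially the same route as the paper: the pathwise factorization $L^{y,\gamma}_{t,T}(\lambda) = L^{y,\gamma}_{t,S}(\lambda)\,L^{Y^{t,y,\gamma}_S,\gamma}_{S,T}(\lambda)$ combined with the Bayes relation~\eqref{eq:bayes-m-mu}, extraction of the $\cF^N_S$-measurable prefactor, and identification of the remaining conditional expectation via the strong Markov property of the Poisson process under $\Q$ through the concatenation $\omega \otimes_{S(\omega)} \cdot$. Your added attention to the measurability of $\gamma^{S(\omega),\omega}$ and to integrability is a welcome refinement of a point the paper leaves implicit.
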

\begin{proof}
	First, note that for all $\lambda \geq 0$:
	\[  L^{y, \gamma}_{t,T}(\lambda) = L^{y, \gamma}_{t, S}(\lambda) L^{Y^{t, y, \gamma}_{S}, \gamma}_{S, T}(\lambda).  \]
	Combined with \eqref{eq:bayes-m-mu}, this implies:
	\[ \langle L^{y, \gamma}_{t, S},  m_\mu(n, z) \rangle \langle L^{Y^{t, y, \gamma}_{S}, \gamma}_{S, T}, m_\mu(n + N_S - N_t, Z^{t, y, z, \gamma}_S)  \rangle = \langle L^{y, \gamma}_{t, T}, m_\mu(n, z) \rangle.  \]
	We now prove the second equality. Let $A \in \cF^N_S$.  To simplify notation, define:
	\[ y^\gamma_S := Y^{t, y, \gamma}_S, \quad z^\gamma_S := Z^{t, y, z, \gamma}_S, \quad n_S := n + N_S - N_t. \]
	Additionally, set:
	\[ \Delta Z^\gamma_{S, T}  := Z_{T}^{S, y^\gamma_S, z^\gamma_S, \gamma} - z^\gamma_S = \int_S^T g(Y^{t, y, \gamma}_{u}) \d u. \]
Then:
	\begin{align*} & \E_\Q  \langle L^{y, \gamma}_{t, T}, m_\mu(n, z) \rangle \indica{A} \Psi_\mu(n  + N_T - N_t, Z^{t, y, z, \gamma}_T)  \\
		& \quad = \E_\Q \langle L^{y, \gamma}_{t, S},  m_\mu(n, z) \rangle \langle L^{y^\gamma_S, \gamma}_{S, T}, m_\mu(n_S, z^\gamma_S)   \rangle  \indica{A} \Psi_\mu(n_S + N_T - N_S, z^\gamma_S + \Delta Z^\gamma_{S, T} ) \\
		& \quad =  \int  \langle L^{y, \gamma}_{t, S}(\omega),  m_\mu(n, z) \rangle \indica{A}(\omega)   \\
		& \qquad ~~~~~  \E_\Q \left[   \langle L^{y^\gamma_S, \gamma}_{S, T}, m_\mu(n_S, z^\gamma_S)   \rangle  \Psi_\mu(n_S  + N_T - N_S, z^\gamma_S + \Delta Z^\gamma_{S, T})   ~|~ \cF^N_S \right](\omega)   \Q(\d \omega) \\
		& \quad = \int  \langle L^{y, \gamma}_{t, S}(\omega),  m_\mu(n, z) \rangle \indica{A}(\omega)  J^{\gamma^{S(\omega), \omega}}_2(S, y^{\gamma^{S(\omega), \omega}}_{S}, z^{\gamma^{S(\omega), \omega}}_{S}, n_{S})(\omega) \Q(\d \omega).
	\end{align*}
	The last equality follows from the strong Markov property of the standard Poisson process. Since this holds for any $A \in \cF^N_S$, the result is established.
\end{proof}
We now provide the proof of Proposition~\ref{prop:ppd}, which follows from standard arguments. Recall that: 	\[
	J^\gamma(t, y, z, n) = J^\gamma_1(t, y, z, n) + \kappa J^\gamma_2(t, y, z, n).
\]
Using Lemma~\ref{lem:pseudo-markov-ppd} with a deterministic time $S = s \in [t, T]$, we have:
\begin{align*}
	v(t, y, z, n) = \inf_{\gamma \in \cA} \E_\Q \left[ \langle L^{y, \gamma}_{t, s}, m_\mu(n, z) \rangle \left\{ \int_t^s \frac{\gamma^2_u}{2} \d u  + J^\gamma(s, Y^{t, y, \gamma}_s, Z^{t, y, z, \gamma}_s, n + N_s - N_t)    \right\} \right].
\end{align*}
It follows that:
\[ v(t, y, z, n) \geq \inf_{\gamma \in \cA} \E_\Q \left[ \langle L^{y, \gamma}_{t, s}, m_\mu(n, z) \rangle \left\{ \int_t^s \frac{\gamma^2_u}{2} \d u  + v(s, Y^{t, y, \gamma}_s, Z^{t, y, z, \gamma}_s, n + N_s - N_t)    \right\} \right]. \]
For the reverse inequality, fix $\epsilon > 0$ and $s \in (t, T)$. Using the regularity of $J^\gamma$ and $v$ established in Section~\ref{sec:regulairty-value-function}, there exists $\alpha > 0$ such that, for all $n \in \mathbb{N}$, there is a partition $(B_{i}^{n})_{i \in \mathbb{N}}$ of $\mathbb{R} \times \mathbb{R}_{+}$ with centers $(y_{i}^{n}, z_{i}^{n}) \in B_{i}^{n}$ such that, for all $(n, i) \in \mathbb{N}^{2}$ and $(y, z) \in B_{i}^{n}$:
	\[
		|y - y_{i}^{n}| + |z - z_{i}^{n}| < \alpha \quad \text{and} \quad |v(s, y, z, n) - v(s, y_{i}^{n}, z_{i}^{n}, n)| < \epsilon.
	\]
For each $(y_{i}^{n}, z_{i}^{n}, n)$, there exists an $\epsilon$-optimal control $\alpha(y_{i}^{n}, z_{i}^{n}, n) \in \cA$ such that:
\[ J^{\alpha(y_{i}^{n}, z_{i}^{n}, n)}(s, y^{n}_i, z^{n}_i, n) \leq v(s, y^{n}_i, z^{n}_i, n) + \epsilon. \]
Define the control $\gamma \in \cA$ by:
\[
\gamma^*_u = \begin{cases} \gamma_u \quad  \text{ if } \quad u < s \\ \sum_{i\geq0} \alpha_u^{i, n_{s}} \indica{B^{n_{s}}_{i}}(Y^{t, y, \gamma}_s, Z^{t, y, z, \gamma}_s) \quad \text{ if } \quad u \in [s, T]. \end{cases}
\]
where $\alpha^{i, n} := \alpha(y_{i}^{n}, z_{i}^{n}, n)$ for $i, n \geq 0$ and $n_{s} := n + N_s - N_t$.
To simplify notation, let $y_s = Y^{t, y, \gamma}_s$ and $z_s = Z^{t, y, z, \gamma}_s$. Then:
\begin{align*} J^{\gamma^*}(s, y_s, z_s, n_s) &=  \sum_{i} \left[J^{\gamma^*}(s, y_s, z_s, n_s) - J^{\gamma^*}(s, y^{n_s}_{i}, z^{n_s}_{i}, n_s)\right] \indica{B^{n_s}_{i}}(y_s, z_s) \\
	& \quad +  \sum_{i} \left[ J^{\gamma^*}(s, y^{n_s}_{i}, z^{n_s}_{i}, n_s)- v(s, y^{n_s}_{i}, z^{n_s}_{i}, n_s)\right] \indica{B^{n_s}_{i}}(y_s, z_s) \\
	& \quad + \sum_{i} \left[ v(s, y^{n_s}_{i}, z^{n_s}_{i}, n_s)- v(s, y_s, z_s, n_s)\right] \indica{B^{n_s}_{i}}(y_s, z_s) \\
	& \quad + v(s, y_s, z_s, n_s) \\
& \leq  3\epsilon + v(s, y_s, z_s, n_s). 
\end{align*}  
Thus:
\begin{align*}
	v(t, y, z, n) \leq 3 \epsilon + \E_\Q \left[ \langle L^{y, \gamma}_{t, s}, m_\mu(n, z) \rangle \left\{ \int_t^s \frac{\gamma^2_u}{2} \d u  + v(s, y_s, z_s, n_s) \right\} \right].
\end{align*}
Letting $\epsilon \rightarrow 0$ and noting that $\gamma$ is arbitrary between $t$ and $s$, we deduce:
\[ 	v(t, y, z, n) \leq \inf_{\gamma \in \cA} \E_\Q \left[ \langle L^{y, \gamma}_{t, s}, m_\mu(n, z) \rangle \left\{ \int_t^s \frac{\gamma^2_u}{2} \d u  + v(s, y_s, z_s, n_s) \right\} \right]. \]
This completes the proof of Proposition~\ref{prop:ppd}. \qed

\subsection{Time regularity}
\label{sec:time-regularity}
We now establish the following:
\begin{proposition}
	\label{prop:time-continuity}
	Under Assumption~\ref{ass:standing-assumptions}, the value function $v$ is continuous.
\end{proposition}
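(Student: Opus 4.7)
The plan is to establish Lipschitz continuity in $t$ (which, combined with the spatial Lipschitz estimate of Proposition~\ref{prop:regularity} and the fact that $n$ is integer-valued, implies the claimed joint continuity). The two main tools are Proposition~\ref{prop:ppd} (the DPP) and the uniform bound $v \le \kappa \lambda_{\max}^2 / 4$ obtained by testing against $\gamma \equiv 0$ and Assumption~\ref{ass:standing-assumptions}. Throughout, I fix $(y, z, n) \in \R \times \R_+ \times \N$ and $0 \le t < t' \le T$ and write $\Delta := t' - t$. The proof splits into the two inequalities $v(t, \cdot) \le v(t', \cdot) + C \Delta$ and $v(t', \cdot) \le v(t, \cdot) + C \Delta$, where $C$ may depend on $(|y|, |z|)$.

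For the first direction, I apply Proposition~\ref{prop:ppd} at $s = t'$ with the suboptimal choice $\gamma \equiv 0$ on $[t, t']$, yielding
\[ v(t, y, z, n) \le \E_\Q \langle L^{y, 0}_{t, t'}, m_\mu(n, z) \rangle \, v\big(t', Y^{t, y, 0}_{t'}, Z^{t, y, z, 0}_{t'}, n + N_{t'} - N_t\big). \]
I split the $\Q$-expectation over $\{N_{t'} = N_t\}$ and its complement. On the no-jump event, $Y^{t, y, 0}$ solves the deterministic ODE $\dot{Y}_s = b(Y_s)$, so Grönwall gives $|Y^{t, y, 0}_{t'} - y| + |Z^{t, y, z, 0}_{t'} - z| = O((1 + |y|) \Delta)$, and Proposition~\ref{prop:regularity} yields $v(t', Y^{t, y, 0}_{t'}, Z^{t, y, z, 0}_{t'}, n) \le v(t', y, z, n) + C \Delta$. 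On the complementary event, which has $\Q$-probability $1 - e^{-\Delta} = O(\Delta)$, the uniform bound on $v$ applies. Using the martingale identity $\E_\Q \langle L^{y, 0}_{t, t'}, m_\mu \rangle = 1$ to weight these contributions gives the inequality.

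For the reverse direction, let $\epsilon > 0$ and pick $\gamma^* \in \cA_t$ with $J^{\gamma^*}(t, y, z, n) \le v(t, y, z, n) + \epsilon$. By Lemma~\ref{lem:pseudo-markov-ppd} with $S = t'$,
\[ v(t, y, z, n) + \epsilon \ge \E_\Q \langle L^{y, \gamma^*}_{t, t'}, m_\mu(n, z) \rangle \left[ \int_t^{t'} \tfrac{(\gamma^*_u)^2}{2}\, \d u + v\big(t', Y^{t, y, \gamma^*}_{t'}, Z^{t, y, z, \gamma^*}_{t'}, n + N_{t'} - N_t\big) \right]. \]
On the no-jump event the translated filtration $(\cF^{N, t}_s)_{s \le t'}$ is trivial, so $\gamma^*|_{[t, t']}$ is (conditionally on $\cF^N_t$) a deterministic $L^2$-function. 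Grönwall and Cauchy--Schwarz then give $|Y^{t, y, \gamma^*}_{t'} - y| + |Z^{t, y, z, \gamma^*}_{t'} - z| \le C(1 + |y|) \Delta + C \sqrt{\Delta X}$ with $X := \int_t^{t'} (\gamma^*_u)^2 \d u$. Combining Proposition~\ref{prop:regularity} with the AM--GM bound $\tfrac{X}{2} - C\sqrt{\Delta X} \ge -\tfrac{C^2 \Delta}{2}$, the bracketed expression on the no-jump event is $\ge v(t', y, z, n) - C(1 + |y| + |z|) \Delta$; on the jump event I bound it below by $0$ using nonnegativity of both terms. A direct pathwise computation of $L^{y, \gamma^*}_{t, t'}$ on the no-jump event gives $\E_\Q \langle L^{y, \gamma^*}_{t, t'} \indica{N_{t'} = N_t}, m_\mu \rangle \ge 1 - C \Delta$, and combining with the uniform bound on $v$ and letting $\epsilon \downarrow 0$ gives the desired inequality.

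The main obstacle, and the reason a naive application of the DPP fails, is precisely that $\cA_t$ admits unbounded controls: the spatial Lipschitz estimate produces a term proportional to $\sqrt{\Delta X}$ which cannot be controlled pathwise in $\gamma^*$. The quadratic regularization in the cost functional, absorbed by AM--GM as above, is what makes the argument go through; the remaining steps are bookkeeping in the Poisson expectation.
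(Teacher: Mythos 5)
Your proof is correct, and in the decisive step it takes a genuinely different route from the paper's. Both arguments share the same ingredients (the dynamic programming principle at the deterministic time $t'$, the spatial Lipschitz estimate of Proposition~\ref{prop:regularity}, the uniform bound $0 \le v \le \kappa\lambda_{\max}^2/4$, and the fact that the $\P$-probability of a jump on $[t,t']$ is $O(\Delta)$). The paper, however, works with the single identity $v(t,\cdot)-v(s,\cdot)=\inf_\gamma[A_1^\gamma+A_2^\gamma+A_3^\gamma]$, restricts to the subclass $\cA_0$ of controls performing no worse than $\gamma\equiv 0$, picks an $\epsilon$-optimal control and invokes dominated convergence to show that the residual term $\E_\Q\langle L^{y,\gamma}_{t,s},m_\mu\rangle\int_t^s\gamma_u^2/2\,\d u$ vanishes as $s\downarrow t$; this yields continuity but no modulus, and the limit has to be taken with some care since the $\epsilon$-optimal control depends on $s$. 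You instead prove the two one-sided inequalities separately and, in the hard direction, absorb the control-dependent error $C\sqrt{\Delta X}$ coming from the Lipschitz estimate into the running cost $X/2$ via the completion of the square $\tfrac{X}{2}-C\sqrt{\Delta X}\ge -\tfrac{C^2\Delta}{2}$. This is exactly the right way to exploit the quadratic regularization against unbounded controls: it is pathwise, sidesteps any uniformity issue in $s$, and yields the stronger conclusion that $v$ is locally Lipschitz in time (the constant depends on $|y|$ only through the drift term $|b(y)|\Delta$ in the Grönwall bound, which is all one needs for joint continuity). Two cosmetic remarks: the observation that $\gamma^*$ is deterministic on $(t,\tau_1]$ is true (by the predictability structure of $\cA_t$) but not needed, since Grönwall, Cauchy--Schwarz and the square-completion are all pathwise; and in the jump terms the relevant quantity is the weighted probability $\E_\Q[\langle L^{y,\gamma}_{t,t'},m_\mu\rangle\indic{N_{t'}>N_t}]=\P(N_{t'}>N_t)\le (1-e^{-\lambda_{\max}\norm{g}_\infty\Delta})$ rather than the bare $\Q$-probability $1-e^{-\Delta}$ — you handle this correctly but should state it as such.
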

Given Proposition~\ref{prop:regularity}, it suffices to prove that the value function is continuous with respect to time. Since the prior distribution is compactly supported in $[0, \lambda_{\max}]$, we recall that:
\[ v(t, y, z, n) \leq \frac{\lambda^2_{\max}}{4}. \]
We begin with a preliminary lemma. For $\gamma \in \cA$, let $\varphi^{\gamma}_{t, s}(y)$ denote the solution to the ordinary differential equation:
\[  \frac{\d }{\d s}\varphi^\gamma_{t, s}(y) = b(\varphi^\gamma_{t, s}(y)) + \gamma_s \quad \text{ with } \quad \varphi^\gamma_{t,t}(y) = y. \] 
\begin{lemma}
	There exists a constant $C_T > 0$ such that, for all $t \leq s \leq T$: 
	\[ \forall y \in \R, \quad \abs{\varphi^\gamma_{t, s}(y) -y} \leq C_T \int_t^s \abs{\gamma_u} \d u. \]
\end{lemma}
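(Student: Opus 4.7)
The proof reduces to a standard Gronwall estimate on the integral form of the ODE. Starting from the defining equation, I would integrate from $t$ to $s$ to get
\[
\varphi^\gamma_{t,s}(y) - y \;=\; \int_t^s b(\varphi^\gamma_{t,u}(y))\,\d u + \int_t^s \gamma_u\,\d u.
\]
Setting $\eta(u) := \varphi^\gamma_{t,u}(y) - y$, the second integral is directly controlled by $\int_t^s |\gamma_u|\,\d u$, and for the first I would write $b(y+\eta(u)) = b(y) + [b(y+\eta(u)) - b(y)]$ and use the Lipschitz bound $\|b'\|_\infty < \infty$ from Assumption~\ref{ass:standing-assumptions} to obtain
\[
|\eta(s)| \;\leq\; \int_t^s |\gamma_u|\,\d u \;+\; \|b'\|_\infty \int_t^s |\eta(u)|\,\d u.
\]
An application of Gronwall's lemma then gives
\[
|\eta(s)| \;\leq\; e^{\|b'\|_\infty (s-t)} \int_t^s |\gamma_u|\,\d u \;\leq\; e^{\|b'\|_\infty T} \int_t^s |\gamma_u|\,\d u,
\]
so the lemma holds with $C_T := e^{\|b'\|_\infty T}$.

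The only delicate point is the passage from $b(y+\eta(u))$ to $\|b'\|_\infty |\eta(u)|$: a naive triangle inequality produces a stray $|b(y)|(s-t)$ term that does not fit the right-hand side of the stated bound. This disappears because in the finite-dimensional reduction of Section~\ref{sec:reduction-finite-dim}, the $Y$-dynamics given by \eqref{eq:def-Y-indices} have no drift $b$, so effectively $\varphi^\gamma_{t,s}(y) = y + \int_t^s \gamma_u\,\d u$ and the claim is immediate. If instead one wishes to keep a nontrivial Lipschitz $b$, the same Gronwall argument works provided $b(y)$ itself is absorbed: for example, assuming $b(0)=0$, which together with $\|b'\|_\infty < \infty$ gives $|b(y+\eta(u))| \leq \|b'\|_\infty (|y|+|\eta(u)|)$, but then $C_T$ would unavoidably depend on $|y|$. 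The main conceptual obstacle is therefore not the computation, which is routine, but reconciling the exact form of the drift used in $\varphi^\gamma_{t,s}$ with the linear-in-$\int|\gamma|$ bound asserted in the statement; in either reading, Gronwall handles the ODE part without difficulty.
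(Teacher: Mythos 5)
Your Gr\"onwall computation is exactly the paper's intended argument: the published proof is the single sentence ``the result follows by applying Gr\"onwall's lemma to $s \mapsto \varphi^\gamma_{t,s}(y) - y$, using the Lipschitz property of $b$,'' i.e.\ precisely the naive estimate you carry out. The ``delicate point'' you flag is therefore not a defect of your write-up but a genuine gap in the statement and in the paper's own proof. With the drift present, as in the ODE $\frac{\d}{\d s}\varphi^\gamma_{t,s}(y) = b(\varphi^\gamma_{t,s}(y)) + \gamma_s$ defining $\varphi^\gamma$ just above the lemma, the claimed bound is false unless $b \equiv 0$: take $\gamma \equiv 0$ and any $y$ with $b(y) \neq 0$; the right-hand side vanishes while $\varphi^0_{t,s}(y) \neq y$ for $s > t$. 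The honest Gr\"onwall output is $\abs{\varphi^\gamma_{t,s}(y) - y} \leq e^{\norm{b'}_\infty T}\bigl(\int_t^s \abs{\gamma_u}\,\d u + \abs{b(y)}(s-t)\bigr)$, and the extra term cannot be absorbed uniformly in $y$ since $b$ is Lipschitz but not assumed bounded.

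Your two readings of how to repair this are the right ones, and the discrepancy indeed traces to an inconsistency in the paper: equation \eqref{eq:def-Y-indices} drops the drift $b$ from the reduced $Y$-dynamics (whereas Section~\ref{sec:girsanov-reformulation} and the HJB drift $B(x)=(b(y),g(y))$ keep it), so either the lemma is meant for the driftless flow, in which case it is immediate as you say, or its conclusion should read $\abs{\varphi^\gamma_{t,s}(y)-y} \leq C_T\bigl(\int_t^s\abs{\gamma_u}\,\d u + (1+\abs{y})(s-t)\bigr)$. It is worth adding that the weaker, $y$-dependent bound suffices where the lemma is used: in Section~\ref{sec:time-regularity} it feeds into the estimate of $\abs{v(s,Y_s,z_s,n_s)-v(s,y,z,n)}\mathbf{1}_{\{N_s=N_t\}}$, after which one sends $s \downarrow t$ at fixed $(y,z,n)$; an additional deterministic $C(1+\abs{y})(s-t)$ term still vanishes in that limit, so the time-continuity of $v$ (a pointwise statement) is unaffected. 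In short: same approach as the paper, correct computation, and a correct diagnosis of a gap that the paper's one-line proof glosses over.
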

\begin{proof}
 The result follows by applying Grönwall's lemma to $s \mapsto \varphi^\gamma_{t, s}(y) - y$, using the Lipschitz property of $b$.
\end{proof}
Assume without loss of generality that $s > t$ and $s-t \leq 1$. To simplify notation, define: 
	\[
Y_s := Y^{t, y, \gamma}_s, \quad n_s := n + N_{s} - N_t, \quad z_s := z + \int_t^s g(Y^{t, y, \gamma}_{u} \d u).
\]
Using Proposition~\ref{prop:ppd}, we write:
\begin{align}
	v(t, y, z, n) - v(s, y, z, n) & =  \inf_{\gamma \in \cA}\E_\Q \langle L^{y, \gamma}_{t, s}, m_\mu(n, z) \rangle \left( \int_t^s \frac{\gamma^2_u}{2} \d u \right. \nonumber  \\
														    & \quad \quad + [v(s, Y_s, z_s, n_s) - v(s, y, z, n)] \indic{N_{s} = N_t}  \nonumber \\
														    & \quad \quad \left. +  [v(s, Y_s, z_s, n_s) - v(s, y, z, n)] \indic{N_s > N_t} \vphantom{\frac12}\right) \nonumber \\
														    &	=: \inf_{\gamma \in \cA} \left[ A^\gamma_1+A^\gamma_2+A^\gamma_3 \right]. \label{eq:vt - vs}
\end{align}
Since $v$ is bounded by $\frac{\lambda_{\max}^2}{4}$, and $\E_\Q \langle L^{y, \gamma}_{t, s}, m_\mu(n, z) \rangle  \indic{N_{s} > N_t} \leq \P (\tilde{N}_{s} > \tilde{N}_t) \leq C (s-t)$, where under $\P$, $(\tilde{N}_t)$ is a Poisson process with intensity $\lambda_{\max} \norm{g}_\infty$, it follows that $|A^\gamma_3| \leq C (s-t)$ for any control $\gamma \in \cA$.
For the second term, using the regularity results from Section~\ref{sec:regulairty-value-function}:
\begin{align*}
	\abs{v(s, Y_s, z_s, n_s) - v(s, y, z, n)} \indic{N_s = N_t} & \leq C_T \left[ \abs{\varphi^\gamma_{t, s}(y) - y} + \abs{z_s-z} \right] \\
														       & \leq C_T( \int_t^{s} \abs{\gamma_u} \d u + (s-t) \norm{g}_\infty).
\end{align*}
Since $\abs{\gamma_u} \leq 1 + \gamma^2_u$, we deduce that, for any $\gamma \in \cA$:
\begin{align*} |A^\gamma_1| + |A^\gamma_2|  + |A^\gamma_3|& \leq C_T (s-t) + C_T \E_\Q  \langle L^{y, \gamma}_{t, s}, m_\mu(n, z) \rangle  \int_t^s \frac{\gamma^2_u}{2} \d u. \end{align*}
Define the subclass of admissible controls:
\[ \cA_0(t, y, z, n) := \{ \gamma \in \cA: J^\gamma(t, y, z, n) \leq J^0(t, y, z, n) \}. \]
Clearly, we can restrict to this class since:
\[ v(t, y, z, n) = \inf_{\gamma \in \cA_0(t, y, z, n)} J^\gamma(t, y, z, n). \]
Fix $\epsilon > 0$. Let $\gamma \in \cA_0(t, y, z, n)$ be an $\epsilon$-optimal for \eqref{eq:vt - vs}. Then: 
\[ \abs{v(t, y, z, n) - v(s, y, z, n)} \leq \epsilon + C_T(s-t) + C_T \E_\Q  \langle L^{y, \gamma}_{t, s}, m_\mu(n, z) \rangle  \int_t^s \frac{\gamma^2_u}{2} \d u. \]
By the dominated convergence theorem, the right-hand side is less than $3 \epsilon$ when $s$ is sufficiently close to $t$.
Thus, the value function is continuous in time. \qed

\subsection{The dynamic programming principle}
\label{sec:PPD}
We now establish the general dynamic programming principle, which extends to stopping times.
\begin{proposition}
	\label{prop:PPD}
	Under Assumption~\ref{ass:standing-assumptions}, let $\tau$ be an $(\cF^N_t)$ stopping time such that, a.s. $\tau \in [t, T]$. Then:
	\begin{align*} v(t, y, z, n) = \inf_{\gamma \in \cA_t} & \E_\Q \langle L^{y, \gamma}_{t, \tau}, m_\mu(n, z)\rangle \left[ \int_t^\tau \frac{\gamma^2_u}{2} \d u + v(\tau, Y^{t, y, \gamma}_\tau, Z^{t, y, z, \gamma}_\tau, n + N_\tau - N_t) \right]. 
	\end{align*}
\end{proposition}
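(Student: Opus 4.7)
The strategy is to reduce the stopping-time DPP to the deterministic-time version (Proposition~\ref{prop:ppd}) by approximating $\tau$ from above with simple stopping times, while leveraging the flow identity of Lemma~\ref{lem:pseudo-markov-ppd} and the regularity of $v$ (Propositions~\ref{prop:regularity} and~\ref{prop:time-continuity}). The two inequalities are handled separately.

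\emph{Inequality $\geq$.} This is essentially free from Lemma~\ref{lem:pseudo-markov-ppd} applied at $S = \tau$. Summing the two conditional expectation formulas of that lemma, one obtains for every $\gamma \in \cA_t$ the identity
\[
J^\gamma(t,y,z,n) = \E_\Q\langle L^{y,\gamma}_{t,\tau}, m_\mu(n,z)\rangle\Big[\int_t^\tau \tfrac{\gamma_u^2}{2}\,du + J^{\gamma^{\tau(\omega),\omega}}(\tau, Y^{t,y,\gamma}_\tau, Z^{t,y,z,\gamma}_\tau, n+N_\tau-N_t)\Big].
\]
Since $J^{\gamma^{\tau(\omega),\omega}} \geq v$ pointwise, taking the infimum over $\gamma$ yields the $\geq$ inequality.

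\emph{Inequality $\leq$.} Fix $\gamma \in \cA_t$ and $\epsilon > 0$. Define the dyadic upper approximations $\tau_N := \sum_{k=1}^{2^N} s_k^N \mathbf{1}_{\{s_{k-1}^N < \tau \leq s_k^N\}}$ with $s_k^N := t + k(T-t) 2^{-N}$, so that $\{\tau_N = s_k^N\} \in \cF^N_{s_k^N}$ and $\tau_N \downarrow \tau$. Using the space-regularity of $v$ from Proposition~\ref{prop:regularity}, for each pair $(k,n)$ partition $\R \times \R_+$ into Borel cells $(B^{k,n}_i)_{i \geq 0}$ of diameter less than some $\alpha>0$ with centers $(y^{k,n}_i, z^{k,n}_i)$ such that $\abs{v(s_k^N, \cdot) - v(s_k^N, y^{k,n}_i, z^{k,n}_i, n)} < \epsilon$ on $B^{k,n}_i$, and pick $\epsilon$-optimal controls $\alpha^{k,n,i} \in \cA_{s_k^N}$. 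Glue them via
\[
\gamma^\star_u := \gamma_u \mathbf{1}_{[t,\tau_N]}(u) + \sum_{k,i} \alpha^{k, n_{\tau_N}, i}_u\, \mathbf{1}_{\{\tau_N = s_k^N\}}\, \mathbf{1}_{B^{k, n_{\tau_N}}_i}\!\bigl(Y^{t,y,\gamma}_{\tau_N}, Z^{t,y,z,\gamma}_{\tau_N}\bigr)\, \mathbf{1}_{(\tau_N, T]}(u).
\]
This $\gamma^\star$ is $(\cF^{N,t})$-predictable, equals $\gamma$ on $[t,\tau_N]$, and by applying Lemma~\ref{lem:pseudo-markov-ppd} at $\tau_N$ together with the same three-term telescoping argument used in the proof of Proposition~\ref{prop:ppd}, yields
\[
v(t,y,z,n) \leq J^{\gamma^\star}(t,y,z,n) \leq \E_\Q\langle L^{y,\gamma}_{t,\tau_N}, m_\mu(n,z)\rangle\Big[\int_t^{\tau_N}\tfrac{\gamma_u^2}{2}\,du + v(\tau_N, Y^{t,y,\gamma}_{\tau_N}, Z^{t,y,z,\gamma}_{\tau_N}, n_{\tau_N})\Big] + 3\epsilon.
\]

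\emph{Passage to the limit.} As $N \to \infty$, right-continuity of $Y$, $Z$ and $N$ combined with $\tau_N \downarrow \tau$ gives $Y_{\tau_N} \to Y_\tau$, $Z_{\tau_N} \to Z_\tau$, $n_{\tau_N} \to n_\tau$ almost surely. Continuity of $v$ (Propositions~\ref{prop:regularity} and~\ref{prop:time-continuity}), the uniform bound $0 \leq v \leq \lambda_{\max}^2/4$, and the fact that $s \mapsto \langle L^{y,\gamma}_{t,s}, m_\mu(n,z)\rangle$ is a nonnegative $\Q$-martingale (so that $\E_\Q\langle L^{y,\gamma}_{t,\tau_N}, m_\mu\rangle \int_t^{\tau_N}\gamma^2/2\,du \leq 2 J_1^\gamma(t,y,z,n) < \infty$ uniformly in $N$, when one restricts to $\gamma \in \cA_0(t,y,z,n)$) provide a dominated-convergence argument. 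Letting $\epsilon \downarrow 0$ and taking the infimum over $\gamma$ closes the $\leq$ inequality.

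\emph{Main obstacle.} The delicate point is constructing the admissible concatenated control $\gamma^\star$ whose post-$\tau_N$ continuation depends measurably on the random state $(Y_{\tau_N}, Z_{\tau_N}, n_{\tau_N})$ while remaining near-optimal. This is exactly where the Lipschitz regularity of $v$ in $(y,z)$ from Proposition~\ref{prop:regularity} is indispensable: without it, the cell-by-cell $\epsilon$-optimal selection could not be patched into a single predictable process with a uniformly controlled error. The limit passage $N \to \infty$ is secondary but also non-trivial, since $\langle L^{y,\gamma}_{t,\tau_N}, m_\mu\rangle$ is not a priori bounded; the martingale property under $\Q$ and the uniform bound on $v$ are what make dominated convergence go through.
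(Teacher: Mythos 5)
Your proof is correct and rests on the same two pillars as the paper's (Lemma~\ref{lem:pseudo-markov-ppd} applied at a stopping time for the inequality $\geq$, and an $\epsilon$-optimal cell-by-cell selection made admissible by the regularity of $v$ for the inequality $\leq$), but you handle the stopping time differently. The paper keeps $\tau$ as is and enlarges the partition to $[0,T]\times\R\times\R_+$, selecting $\epsilon$-optimal controls at space-time cell centres $(t_i,y_i,z_i)$; this uses the time-continuity of Proposition~\ref{prop:time-continuity} to control the error across a time cell, and leaves implicit the (mildly delicate) point that a control chosen for the deterministic time $t_i$ must be run from the random time $\tau\neq t_i$. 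You instead discretize $\tau$ from above by finitely-valued stopping times $\tau_N\downarrow\tau$, so that each $\epsilon$-optimal control is started exactly at a deterministic time $s_k^N$ and the deterministic-time machinery of Proposition~\ref{prop:ppd} applies verbatim on each event $\{\tau_N=s_k^N\}$; the price is the extra limit $N\to\infty$, where time-continuity of $v$ reappears. Two small points to tighten in that limit: the convergence $\E_\Q\langle L^{y,\gamma}_{t,\tau_N},m_\mu(n,z)\rangle\, v(\tau_N,\cdot)\to\E_\Q\langle L^{y,\gamma}_{t,\tau},m_\mu(n,z)\rangle\, v(\tau,\cdot)$ is best justified by noting that $\langle L^{y,\gamma}_{t,\tau_N},m_\mu(n,z)\rangle=\E_\Q[\langle L^{y,\gamma}_{t,T},m_\mu(n,z)\rangle\mid\cF^N_{\tau_N}]$ is uniformly integrable (a stopped closed martingale) and $v$ is bounded, rather than by plain domination; and for the running-cost term it is cleaner to rewrite $\E_\Q\langle L^{y,\gamma}_{t,\tau_N},m_\mu(n,z)\rangle\int_t^{\tau_N}\gamma_u^2/2\,\d u=\E_\Q\langle L^{y,\gamma}_{t,T},m_\mu(n,z)\rangle\int_t^{\tau_N}\gamma_u^2/2\,\d u$ and use monotone convergence, restricting as you do to $\gamma$ with finite cost. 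With these adjustments your argument is a complete, and in some respects more explicit, substitute for the paper's sketch.
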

\begin{proof}
	Given the continuity of the value function (Proposition~\ref{prop:time-continuity}), the proof follows similarly to that of Proposition~\ref{prop:ppd}, with a slight modification: the partition is now defined on $[0, T] \times \R \times \R_+$, to account for stopping times. Using Lemma~\ref{lem:pseudo-markov-ppd}, the proof is completed as in the deterministic case.
\end{proof}

\section{Viscosity solution properties and characterization}
\label{sec:visc}
In this section, to simplify notation, we let $x = (y, z) \in \R \times \R_+$. With this convention, the value function is denoted by $v(t, x, n)$, where $t \in [0, T]$ and $n \in \N$. Based on the results of the previous section, we expect that the value function formally satisfies:
\begin{align}
	\label{eq:HJB-reformulated}
	-\partial_t v + H(x, \nabla_x v) + \theta_n(x) [v - v(t, \psi_n(x))] = 0,
\end{align}
with the terminal condition:
\begin{equation}  v(T, x, n) = \kappa \Psi_\mu(n, z). \label{eq:terminal-condition-HJB}
\end{equation}
Here:
\[ H(x, p) := \frac{1}{2} p_1^2 - B(x) \cdot p, \quad B(x) := \begin{pmatrix} b(y) \\ g(y) \end{pmatrix}, \quad \psi_n(x) := (0, z, n+1) \]
and:
\[ \theta_n(x) := \frac{\Phi_\mu(n+1, z)}{\Phi_\mu(n, z)} g(y).  \]
We denote $D := [0, T] \times \R \times \R_+$ and let $BUC(D)$ denote the space of bounded and uniformly continuous functions on $D$.
Following \cite{MR861089}, we consider the following notion of viscosity solutions. 
\begin{definition}
	\label{def:viscosity-sol}
	A function $u: D \times \N \rightarrow \R$ is a viscosity subsolution of \eqref{eq:HJB-reformulated} if, for all $n \in \N$, the mapping $(t, x) \mapsto u(t, x, n)\in BUC(D)$, and for all $(t_\circ, x_\circ, n_\circ) \in D \times \N$, for all functions $\varphi \in C(D \times \N)$ that are $C^1$ in an open neighborhood of $(t_\circ, x_\circ, n_\circ)$ and satisfy $\max(u-\varphi) = (u-\varphi)(t_\circ, x_\circ, n_\circ) = 0$, it holds that if $t_\circ < T$:
	\[
	-\partial_t \varphi(t_\circ,x_\circ, n_\circ) + H(x_\circ, \partial_x \varphi(t_\circ, x_\circ, n_\circ)) + \theta_{n_\circ}(x_\circ) \left[ u(t_\circ, x_\circ, n_\circ) - u(t_\circ, \psi_{n_\circ}(x_\circ)) \right] \leq 0.
	\]
	A function $w: D \times \N \rightarrow \R$ is a viscosity supersolution of \eqref{eq:HJB-reformulated} if, for all $n \in \N$, the mapping $(t, x) \mapsto w(t, x, n)\in BUC(D)$, and for all $(t_\circ, x_\circ, n_\circ) \in D \times \N$, for all functions $\varphi \in C(D \times \N)$ that are $C^1$ in an open neighborhood of $(t_\circ, x_\circ, n_\circ)$ and satisfy $\min(w-\varphi) = (w-\varphi)(t_\circ, x_\circ, n_\circ) = 0$, it holds that if $t_\circ < T$:
	\[
	-\partial_t \varphi(t_\circ,x_\circ, n_\circ) + H(x_\circ, \partial_x \varphi(t_\circ, x_\circ, n_\circ)) + \theta_{n_\circ}(x_\circ) \left[ w(t_\circ, x_\circ, n_\circ) - w(t_\circ, \psi_{n_\circ}(x_\circ)) \right] \geq 0. 
	\]
	Finally, a function $u: D \times \N \rightarrow \R$ is a viscosity solution of \eqref{eq:HJB-reformulated}if it is both a viscosity subsolution and a viscosity supersolution of this equation.
	\end{definition}

We shall use an equivalent reformulation of viscosity solutions. Following \cite{MR861089}, we have:
	\begin{lemma}
		\label{lem:reformulation-visc-solution}
		A function $u:D\times\N \mapsto \R$ with $(t, x) \mapsto u(t, x, n) \in BUC(D)$ for all $n \in \mathbb{N}$ is a viscosity subsolution of \eqref{eq:HJB-reformulated} if and only if, for all $(t_\circ, x_\circ, n_\circ) \in D \times \N$,  and for all functions $\varphi \in C(D \times \N)$ that are $C^1$ in an open neighborhood of $(t_\circ, x_\circ, n_\circ)$ and satisfy $\max(u-\varphi) = (u-\varphi)(t_\circ, x_\circ, n_\circ) = 0$, it holds that if $t_0 < T$:
	\[
	-\partial_t \varphi(t_\circ,x_\circ, n_\circ) + H(x_\circ, \partial_x \varphi(t_\circ, x_\circ, n_\circ)) + \theta_{n_\circ}(x_\circ) \left[ \varphi(t_\circ, x_\circ, n_\circ) - \varphi(t_\circ, \psi_{n_\circ}(x_\circ)) \right] \leq 0. 
	\]
	A similar statement holds for viscosity supersolutions.
	\end{lemma}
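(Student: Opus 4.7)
The plan is to prove the two implications separately, both relying on the sign $\theta_{n_\circ}(x_\circ) \geq 0$ and on the contact conditions $u(t_\circ, x_\circ, n_\circ) = \varphi(t_\circ, x_\circ, n_\circ)$ and $u \leq \varphi$ everywhere else on $D \times \N$. I will write out the subsolution case; the supersolution case follows by reversing inequalities.

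For the forward direction (original definition implies reformulated one), given a test function $\varphi$ touching $u$ from above at $(t_\circ, x_\circ, n_\circ)$, I would apply the global maximum inequality at the shifted point $\psi_{n_\circ}(x_\circ) = (0, z_\circ, n_\circ+1)$ to get $u(t_\circ, \psi_{n_\circ}(x_\circ)) \leq \varphi(t_\circ, \psi_{n_\circ}(x_\circ))$. Combining this with the equality $u(t_\circ, x_\circ, n_\circ) = \varphi(t_\circ, x_\circ, n_\circ)$ yields
\[
\varphi(t_\circ, x_\circ, n_\circ) - \varphi(t_\circ, \psi_{n_\circ}(x_\circ)) \leq u(t_\circ, x_\circ, n_\circ) - u(t_\circ, \psi_{n_\circ}(x_\circ)).
\]
Multiplying by $\theta_{n_\circ}(x_\circ) \geq 0$ and adding the (common) first-order terms shows that the reformulated inequality is weaker than the original one, hence satisfied.

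For the converse direction, I would use the standard trick of modifying the test function outside of $n_\circ$. Given $\varphi$ with $\max(u - \varphi) = (u-\varphi)(t_\circ, x_\circ, n_\circ) = 0$, define
\[
\tilde\varphi(t, x, n) := \begin{cases} \varphi(t, x, n_\circ) & \text{if } n = n_\circ, \\ u(t, x, n) & \text{if } n \neq n_\circ. \end{cases}
\]
Because $\N$ carries the discrete topology and $(t,x) \mapsto u(t,x,n) \in BUC(D)$ for each $n$, the function $\tilde\varphi$ lies in $C(D \times \N)$, and it remains $C^1$ in a neighborhood of $(t_\circ, x_\circ, n_\circ)$ since it coincides with $\varphi$ on the slice $n = n_\circ$. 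Moreover, $u - \tilde\varphi$ equals $u - \varphi$ at $n = n_\circ$ and vanishes for $n \neq n_\circ$, so the global maximum is still attained at $(t_\circ, x_\circ, n_\circ)$ with value $0$. Applying the reformulated subsolution inequality to $\tilde\varphi$, and using that $\psi_{n_\circ}(x_\circ)$ sits at $n = n_\circ + 1 \neq n_\circ$ so that $\tilde\varphi(t_\circ, \psi_{n_\circ}(x_\circ)) = u(t_\circ, \psi_{n_\circ}(x_\circ))$ while the derivative data at $(t_\circ, x_\circ, n_\circ)$ are unchanged, recovers exactly the original inequality.

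The only genuinely delicate point is verifying that $\tilde\varphi$ qualifies as an admissible test function; this is where continuity of $u$ in $(t,x)$ for each fixed $n$ and the discreteness of $\N$ are used. The sign of $\theta$ and the fact that the jump operator $\psi_{n_\circ}$ moves $n_\circ$ to $n_\circ + 1$ (so the modification at $n \neq n_\circ$ can redefine exactly the value that appears in the zero-order term) are what make the equivalence work; no subtler analysis is needed.
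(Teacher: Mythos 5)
Your proof is correct. The paper itself gives no argument and simply defers to \cite[Lem.\ 2.1]{MR861089}; what you have written is exactly the standard proof of that equivalence, and both directions check out: the forward implication uses only $\theta_{n_\circ}(x_\circ)\geq 0$ (which holds since $g\geq 0$ and $\Phi_\mu>0$) together with $u\leq\varphi$ at the shifted point $\psi_{n_\circ}(x_\circ)$, and the converse uses the glued test function $\tilde\varphi$, which is admissible because $\N$ is discrete and $u(\cdot,\cdot,n)\in BUC(D)$, and because the (non-strict) maximum of $u-\tilde\varphi$ is still $0$ and still attained at $(t_\circ,x_\circ,n_\circ)$, which is all the definition requires.
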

	The only difference from Definition~\ref{def:viscosity-sol} is in the zero-order term, where the subsolution $u$ is replaced by the test function $\varphi$. For the proof, see \cite[Lem. 2.1]{MR861089}.

\bigbreak

	The main result of this section is the following.
	\begin{theorem}
		Under Assumption~\ref{ass:standing-assumptions}, the value function $v$ is the unique viscosity solution of \eqref{eq:HJB-reformulated} that satisfies the terminal condition~\eqref{eq:terminal-condition-HJB}. 
	\end{theorem}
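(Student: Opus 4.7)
My plan is to prove existence by the dynamic programming principle (Proposition~\ref{prop:PPD}) applied at well-chosen stopping times, and uniqueness by a comparison principle in the style of~\cite{MR861089}, exploiting the reformulation in Lemma~\ref{lem:reformulation-visc-solution} to handle the nonlocal jump term. The terminal condition $v(T,x,n)=\kappa\Psi_\mu(n,z)$ is immediate from the definitions of $J^\gamma$ and $\Psi_\mu$. Continuity of $(t,x)\mapsto v(t,x,n)$ on each $n$-slice follows from Propositions~\ref{prop:regularity} and~\ref{prop:time-continuity}, and $v$ is bounded by $\kappa\lambda_{\max}^2/4$ uniformly in $n$ (take $\gamma\equiv 0$), placing it in the $BUC(D)$ class used in Definition~\ref{def:viscosity-sol}.

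\textbf{Existence.} Fix $(t_0,x_0,n_0)$ with $t_0<T$ and a test function $\varphi$. For the subsolution property I apply Proposition~\ref{prop:PPD} with the $\cF^N$-stopping time $\tau_h := (t_0+h)\wedge\tau_1$, where $\tau_1$ is the first jump of $N$ strictly after $t_0$, and the constant control $\gamma\equiv\bar\gamma\in\R$. Using $v\le\varphi$ to replace $v(\tau_h,\cdot)$ by $\varphi(\tau_h,\cdot)$, expanding $\varphi$ pathwise along the deterministic flow between jumps, and invoking that the effective intensity of $N$ under the weighted measure $\langle L^{y_0,\bar\gamma}_{t_0,\cdot}, m_\mu(n_0,z_0)\rangle\,\d\Q$ tends to $\theta_{n_0}(x_0) = \Xi_\mu(n_0,z_0)g(y_0)$ as the window shrinks, division by $h$ and passage to the limit $h\downarrow 0$ yield
\[ -\partial_t\varphi - B(x_0)\cdot\nabla_x\varphi - \bar\gamma\,\partial_y\varphi - \tfrac{\bar\gamma^2}{2} + \theta_{n_0}(x_0)\bigl[\varphi(t_0,x_0,n_0) - \varphi(t_0,\psi_{n_0}(x_0))\bigr] \le 0, \]
with all derivatives evaluated at $(t_0,x_0,n_0)$. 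Maximizing over $\bar\gamma\in\R$ (optimum $\bar\gamma=-\partial_y\varphi$) recovers $\tfrac{1}{2}(\partial_y\varphi)^2 - B\cdot\nabla_x\varphi = H(x_0,\nabla_x\varphi)$, giving the subsolution inequality in the form of Lemma~\ref{lem:reformulation-visc-solution}. The supersolution property is obtained symmetrically, starting from a measurably selected $\epsilon$-optimal control $\gamma^\epsilon$ (whose existence uses the regularity of $v$ from Section~\ref{sec:space-reg}), linearizing the cost via $\tfrac{\gamma^2}{2}+\gamma p\ge -\tfrac{p^2}{2}$, and sending $h\downarrow 0$ then $\epsilon\downarrow 0$.

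\textbf{Uniqueness and main obstacle.} Let $u$ be a bounded u.c. subsolution and $w$ a bounded u.c. supersolution with $u(T,\cdot)\le w(T,\cdot)$. Assuming for contradiction $M := \sup_{(t,x,n)}(u-w) > 0$, choose $n_0$ nearly attaining $M$ and on the $n=n_0$ slice consider
\[ \Upsilon_{\alpha,\beta,\eta}(t,x,x') := u(t,x,n_0) - w(t,x',n_0) - \tfrac{1}{2\alpha}|x-x'|^2 - \tfrac{\eta}{T-t} - \beta(|x|^2+|x'|^2), \]
which, for $\beta,\eta$ small enough and $\alpha$ small, attains a maximum at some $(\hat t,\hat x,\hat x')$ with $\hat t<T$ and $|\hat x|,|\hat x'|$ controlled by $\beta^{-1/2}$. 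Using Lemma~\ref{lem:reformulation-visc-solution} to put the nonlocal term onto the penalty test functions, subtracting the two viscosity inequalities at $(\hat t,\hat x,n_0)$ and $(\hat t,\hat x',n_0)$, and invoking the standard Crandall--Ishii estimate with the Lipschitz bound on $B$ and the quadratic $p$-structure of $H$, gives $\tfrac{1}{\alpha}|\hat x-\hat x'|^2\to 0$ as $\alpha\downarrow 0$ (with $\beta,\eta$ fixed). The difference of the nonlocal contributions then vanishes in the limit because it is the difference of smooth penalty functions evaluated at $x$ and $\psi_{n_0}(x)$, with $\theta_{n_0}$ uniformly bounded by $\lambda_{\max}\|g\|_\infty$. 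Sending $\alpha,\beta,\eta\downarrow 0$ in that order produces the contradiction. The core difficulty here is that the nonlocal term couples the equation for index $n_0$ to the one for $n_0+1$, making the system over $n\in\N$ genuinely coupled; the reformulation of Lemma~\ref{lem:reformulation-visc-solution}, taken from~\cite{MR861089}, is precisely what decouples it by moving the nonlocal coupling onto the smooth test functions, so that comparison can be closed on a single slice. Unboundedness of the state space is dealt with by the polynomial penalty, and unboundedness of controls enters only through the explicit convex Hamiltonian $H$, requiring no further truncation.
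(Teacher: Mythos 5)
Your existence argument for the subsolution property is essentially the paper's (dynamic programming at $(t_0+h)\wedge\tau_1$ with a constant control, expansion along the flow, optimization over $\bar\gamma$); for the supersolution the paper instead argues by contradiction via the exit time of a small ball, which sidesteps the measurable selection of $\epsilon$-optimal controls that you invoke without constructing. The genuine gap is in your uniqueness argument: the nonlocal term cannot be ``closed on a single slice.'' The term $\theta_{n_0}(x)\bigl[\,\cdot\,(t,x,n_0)-\cdot\,(t,\psi_{n_0}(x))\bigr]$ always evaluates something on the slice $n_0+1$. Lemma~\ref{lem:reformulation-visc-solution} merely replaces $u$ by the test function there, and a test function admissible for that lemma must dominate $u$ on all of $D\times\N$, so its value at $(t,\psi_{n_0}(x))$ is constrained from below by $u$ on slice $n_0+1$; it is not ``a difference of smooth penalty functions evaluated at $x$ and $\psi_{n_0}(x)$'' that vanishes in the limit (note also that $\psi_{n_0}(x)=(0,z)$ need not be close to $x=(y,z)$). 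Choosing $n_0$ nearly attaining $\sup(u-w)$ does not help, since $u-w$ on slice $n_0+1$ can be just as large. The paper resolves this by doubling variables simultaneously over $n$, adding the penalty $-\alpha n$ so that the supremum over $(t,x,t',x',n)$ is attained at some $\bar n$, and then exploiting maximality in $n$: from $\Phi_{\bar n}(\bar t,\bar x,\bar t',\bar x')\ge\Phi_{\bar n+1}(\bar t,\bar x^*,\bar t',\bar x'^*)$ with $\bar x^*=(0,\bar x_2)$, together with $F$ nondecreasing and $\|\bar x^*\|\le\|\bar x\|$, one bounds $w(\bar t',\bar x',\bar n)-w(\bar t',\psi_{\bar n}(\bar x'))-u(\bar t,\bar x,\bar n)+u(\bar t,\psi_{\bar n}(\bar x))$ by $\alpha$. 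This step is the heart of the comparison and is missing from your argument.

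A second, quantitative problem is your localization penalty $\beta(|x|^2+|x'|^2)$. Since $B(x)=(b(y),g(y))$ with $b$ only Lipschitz, the drift contribution $2\beta\,B(\hat x)\cdot\hat x$ is of order $\beta\|\hat x\|^2$, and at the maximum point one only knows $\beta\|\hat x\|^2\le C$; this leaves an $O(1)$ error that does not vanish as $\beta\downarrow0$, so the contradiction cannot be closed. The paper uses $F(\|x\|)=\log(1+\|x\|^2)$ precisely because $\sup_{r\ge0}(1+r)F'(r)<\infty$ makes this contribution $O(\delta)$ uniformly in the location of the maximum. (Also, no Crandall--Ishii lemma is needed here: the equation is first order, and the sub/supersolution properties are applied directly to the smooth penalty functions.)
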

	We prove successively that $v$ is a viscosity subsolution and a viscosity supersolution of \eqref{eq:HJB-reformulated}, and that a comparison theorem holds. We first establish the following crucial lemma.
\begin{lemma}\label{lem:Nmartingale}
	The process
		\[
			\widetilde{N}_{t} := N_t - \int_{0}^{t}\E_{\P}\left[\Lambda \mid \mathcal{F}_{u}^{N}\right] g(Y_u^{t, y, \gamma})\d u, \quad t \geq 0
		\]
\end{lemma}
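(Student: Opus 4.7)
The plan is to recognize $\widetilde{N}_t$ as the compensator of $N$ in the smaller observation filtration $(\cF^N_t)$, by projecting the known $(\cF_t,\P)$-intensity $\Lambda g(Y_{t-})$ onto $(\cF^N_t)$.

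First, I would invoke Proposition~\ref{prop:girsanov}, which states that under $\P$ the counting process $N$ has $(\cF_t)$-stochastic intensity $f_\Lambda(Y_{t-})=\Lambda g(Y_{t-})$. Consequently,
\[
M_t := N_t - \int_0^t \Lambda\, g(Y_u)\,\d u
\]
is an $(\cF_t,\P)$-martingale. Under Assumption~\ref{ass:standing-assumptions} one has $\E_\P N_t \le T\lambda_{\max}\norm{g}_\infty < \infty$, which also shows $\widetilde{N}_t\in L^1(\P)$ so that integrability is not an issue.

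Next, for $0\le s\le t\le T$ I would take $\cF^N_s$-conditional expectations of $M_t-M_s$; since $\cF^N_s\subset\cF_s$, the tower property yields
\[
\E_\P\!\bigl[N_t-N_s\,\big|\,\cF^N_s\bigr] \;=\; \E_\P\!\left[\int_s^t \Lambda\, g(Y_u)\,\d u\,\Big|\,\cF^N_s\right].
\]
Because admissible controls admit the representation~\eqref{eq:control-predictable-explicit-shape} and are therefore $(\cF^N_t)$-predictable, the process $Y$ constructed in Proposition~\ref{def:triple-L-N-Y} is $(\cF^N_t)$-adapted, and in particular $g(Y_u)$ is $\cF^N_u$-measurable for each $u\in[s,t]$. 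Inserting an intermediate conditioning on $\cF^N_u$ then gives
\[
\E_\P[\Lambda\, g(Y_u)\mid\cF^N_s] \;=\; \E_\P\!\bigl[\,g(Y_u)\,\E_\P[\Lambda\mid\cF^N_u]\,\big|\,\cF^N_s\bigr],
\]
and a Fubini exchange — legitimate because $0\le \Lambda g(Y_u)\le \lambda_{\max}\norm{g}_\infty$ uniformly in $u$ — lets me pull the time integral outside the outer conditional expectation. Combining these identities leads to $\E_\P[\widetilde{N}_t-\widetilde{N}_s\mid\cF^N_s]=0$, which is the desired martingale property.

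I expect the main subtlety to lie in justifying that $Y$ is genuinely $(\cF^N_t)$-adapted: a priori $Y$ is defined on the enriched probability space carrying $\Lambda$, and it is only the jump-by-jump reconstruction from Proposition~\ref{def:triple-L-N-Y} together with~\eqref{eq:control-predictable-explicit-shape} that makes this true. Once this measurability point is settled, the rest of the argument is the standard projection of an $(\cF_t)$-compensator onto the observation filtration. An alternative route would be to quote Br\'emaud's innovations theorem \cite{BP} directly, but the self-contained computation above seems more transparent and avoids importing additional machinery.
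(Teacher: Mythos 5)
Your proposal is correct and follows essentially the same route as the paper's proof: both start from the fact that, given $\Lambda$, the compensator of $N$ is $\int \Lambda\, g(Y_u)\,\d u$, then project onto $(\cF^N_s)$ via the tower property, use the $\cF^N_u$-measurability of $g(Y_u)$ to replace $\Lambda$ by $\E_\P[\Lambda\mid\cF^N_u]$, and exchange the time integral with the conditional expectation by Fubini. Your extra care about the integrability bound and the $(\cF^N_t)$-adaptedness of $Y$ is consistent with, and slightly more explicit than, the paper's argument.
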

is an $\mathcal{F}^{N}-$martingale under $\mathbb{P}$.

\begin{proof}
The process is integrable (since $\Lambda \in L^{1}(\mathbb{P})$) and $\mathcal{F}^{N}-$ adapted. Moreover: \begin{align*}
	\mathbb{E}\left[N_{t} - N_{s} \mid \mathcal{F}_{s}^{N} \right] &= \mathbb{E}\left[\mathbb{E}\left[N_{t} - N_{s}  \mid \sigma(\Lambda, \mathcal{F}_{s}^{N}) \right] \mid \mathcal{F}_{s}^{N} \right] = \mathbb{E}\left[\int_{s}^{t}\Lambda g(Y_u^{t, y, \gamma}) \d u \mid \mathcal{F}_{s}^{N}\right] \\
&= \int_{s}^{t}\mathbb{E}\left[\Lambda g(Y_u^{t, y, \gamma}) \mid \mathcal{F}_{s}^{N}\right] \d u = \int_{s}^{t}\mathbb{E}\left[\mathbb{E}\left(\Lambda g(Y_u^{t, y, \gamma}) \mid \mathcal{F}_{u}^{N} \right) \mid \mathcal{F}_{s}^{N}\right] \d u \\
&= \mathbb{E}\left[\int_{s}^{t}\mathbb{E}\left(\Lambda  \mid \mathcal{F}_{u}^{N} \right)g(Y_u^{t, y, \gamma}) \d u \mid \mathcal{F}_{s}^{N}\right].
\end{align*}
Thus, we obtain the result.
\end{proof}

\subsection{Subsolution property}
\label{sec:sub-sol-visc}
	In this section we prove: 
	\begin{lemma}
		The value function $v$ is a viscosity subsolution of \eqref{eq:HJB-reformulated}.
	\end{lemma}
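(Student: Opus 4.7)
The plan is to prove the subsolution property by contradiction, combining the dynamic programming principle of Proposition~\ref{prop:PPD} with the test-function reformulation of the non-local term provided by Lemma~\ref{lem:reformulation-visc-solution}. Fix $(t_\circ, x_\circ, n_\circ) \in D \times \N$ with $t_\circ < T$ and a test function $\varphi$ such that $\max(v - \varphi) = (v-\varphi)(t_\circ, x_\circ, n_\circ) = 0$. Suppose, toward a contradiction, that
\[
-\partial_t \varphi(t_\circ, x_\circ, n_\circ) + H(x_\circ, \nabla_x \varphi(t_\circ, x_\circ, n_\circ)) + \theta_{n_\circ}(x_\circ) \bigl[ \varphi(t_\circ, x_\circ, n_\circ) - \varphi(t_\circ, \psi_{n_\circ}(x_\circ)) \bigr] \geq 2\eta
\]
for some $\eta > 0$. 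By continuity of $\partial_t \varphi$, $\nabla_x \varphi$, $\theta_{n_\circ}$ and $\psi_{n_\circ}$, the analogous inequality with $\eta$ in place of $2\eta$ persists for $(s, x)$ in an open ball $B$ around $(t_\circ, x_\circ)$ with the third slot kept equal to $n_\circ$.

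Introduce the \emph{constant} control $\gamma^{*} := -\partial_y \varphi(t_\circ, x_\circ, n_\circ)$, which realizes the minimum defining $H$: for every $(x,p)$,
\[
H(x, p) + \tfrac{1}{2} (\gamma^{*})^2 + (b(y) + \gamma^{*}) p_1 + g(y) p_2 \geq 0,
\]
with equality at $(x_\circ, \nabla_x \varphi(t_\circ, x_\circ, n_\circ))$. Substituting this identity into the contradiction hypothesis and shrinking $B$ by continuity, one obtains, for every $(s, x) \in B$,
\[
\mathcal{L}_{\gamma^{*}} \varphi(s, x, n_\circ) + \tfrac{1}{2} (\gamma^{*})^2 \leq - \tfrac{\eta}{2},
\]
where $\mathcal{L}_{\gamma^{*}} \varphi(s, x, n) := \partial_t \varphi + (b(y) + \gamma^{*}) \partial_y \varphi + g(y) \partial_z \varphi + \theta_n(x) \bigl[ \varphi(s, \psi_n(x)) - \varphi(s, x, n) \bigr]$.

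Let $\tau_1^N$ denote the first jump of $N$ after $t_\circ$, and choose $h > 0$ small enough that the deterministic ODE flow driven by $\gamma^{*}$ keeps $(s, Y_s, Z_s)$ inside $B$ for $s \in [t_\circ, t_\circ + h]$; set $\tau := (t_\circ + h) \wedge \tau_1^N$, so that on $[t_\circ, \tau)$ one has $n_s = n_\circ$ and the trajectory lies in $B$. Under the probability $\P$ defined by $\E_\P[\,\cdot\,] := \E_\Q\bigl[ \langle L^{y_\circ, \gamma^{*}}_{t_\circ, \cdot}, m_\mu(n_\circ, z_\circ) \rangle \, \cdot\, \bigr]$ (so $\Lambda \sim m_\mu(n_\circ, z_\circ)$ and $N$ has $\cF_t$-intensity $\Lambda g(Y_t)$), Lemma~\ref{lem:Nmartingale} combined with Corollary~\ref{cor:aposteriori-dist} identifies the $\cF^N$-compensator of $N$ under $\P$ as $\int \theta_{n_s}(Y_s, Z_s)\,ds$. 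Dynkin's formula applied to $\varphi(s, Y_s, Z_s, n_s)$ between $t_\circ$ and $\tau$ therefore yields
\[
\E_\P\bigl[ \varphi(\tau, Y_\tau, Z_\tau, n_\tau) \bigr] - \varphi(t_\circ, x_\circ, n_\circ) = \E_\P \int_{t_\circ}^\tau \mathcal{L}_{\gamma^{*}} \varphi(s, Y_s, Z_s, n_\circ)\, ds \leq - \E_\P \int_{t_\circ}^\tau \Bigl( \tfrac{(\gamma^{*})^2}{2} + \tfrac{\eta}{2} \Bigr) ds.
\]
Combining this with the global inequality $v \leq \varphi$ and the equality $v(t_\circ, x_\circ, n_\circ) = \varphi(t_\circ, x_\circ, n_\circ)$ gives
\[
\E_\P \Bigl[ \int_{t_\circ}^\tau \tfrac{(\gamma^{*})^2}{2}\,ds + v(\tau, Y_\tau, Z_\tau, n_\tau) \Bigr] \leq v(t_\circ, x_\circ, n_\circ) - \tfrac{\eta}{2}\, \E_\P[\tau - t_\circ],
\]
which strictly contradicts the DPP inequality of Proposition~\ref{prop:PPD} applied to the admissible control $\gamma^{*}$ and the stopping time $\tau$, since $\tau > t_\circ$ almost surely. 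The main obstacle is the correct identification of the $\cF^N$-compensator of $N$ under the probability implicit in the DPP, so that Dynkin's formula reproduces exactly the nonlocal term $\theta_{n_\circ}\bigl[\varphi(\cdot, \psi_{n_\circ}) - \varphi\bigr]$ appearing in~\eqref{eq:HJB-reformulated}; this is precisely where Lemma~\ref{lem:Nmartingale} together with the finite-dimensional reduction of Section~\ref{sec:reduction-finite-dim} does the work, while the choice of $\tau$ before the first jump guarantees that the third variable $n_s$ stays equal to $n_\circ$ throughout the Lebesgue integral, so the pointwise bound on $B$ can be used directly.
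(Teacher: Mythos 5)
Your proof is correct and uses essentially the same ingredients as the paper: the dynamic programming principle applied with a constant control up to (a deterministic time capped by) the first jump, It\^o/Dynkin's formula with the compensator identified via Lemma~\ref{lem:Nmartingale}, and the optimizing choice $\gamma^\ast=-\partial_y\varphi(t_\circ,x_\circ,n_\circ)$. The only difference is organizational: the paper argues directly, letting $s\downarrow t_\circ$ to get $-\tfrac{\gamma^2}{2}-\cL^\gamma\varphi(t_\circ,x_\circ,n_\circ)\le 0$ for every constant $\gamma$ and then optimizing, whereas you fix $\gamma^\ast$ from the start and run a contradiction on a small ball; both are valid.
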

	\begin{proof}
		Let $x_\circ := (y_\circ, z_\circ)$, $(t_\circ, x_\circ, n_\circ) \in D \times \N$, and let $\varphi: D \times \N \rightarrow \R$ be a test function as in Lemma~\ref{lem:reformulation-visc-solution}.
		Consider a constant deterministic control $\gamma \in \R$. Define $n_{s} := n_\circ + N_{s} - N_{t_\circ}$ and $X_s^{t_{\circ}, x_{\circ}, \gamma} = (Y_s^{t_\circ, y_\circ, \gamma}, Z_s^{t_\circ, y_\circ, z_\circ, \gamma})$, for $t_{\circ} \leq s \leq T$. Let $T_1 := \inf\{ s \geq t_\circ, N_s \neq N_{s-} \}$ denote the time of the first jump.
		Since $\gamma$ is a valid control and $s \wedge T_1$ is a stopping time, the dynamic programming principle~\ref{prop:PPD} implies:
		\[ \E_\P \left[ v(t_\circ, x_\circ, n_\circ) - \int_{t_\circ}^{s \wedge T_1 } \frac{\gamma^2}{2} \d u - v(s \wedge T_1, X_{s \wedge T_1}^{s_{\circ}, x_{\circ}, \gamma}, n_{s \wedge T_1} )  \right]  \leq 0. \]
		Since $\varphi \geq v$, it follows that:
\[	\E_\P \left[ \varphi(t_\circ, x_\circ, n_\circ) - \int_{t_\circ}^{s \wedge T_1 } \frac{\gamma^2}{2} \d u - \varphi(s \wedge T_1, X_{s \wedge T_1}^{t_{\circ}, x_{\circ}, \gamma}, n_{s \wedge T_1})  \right]  \leq 0. \]
By It\^{o}'s formula:
\begin{align*}  \varphi(s \wedge T_1, X_{s \wedge T_1}^{t_{\circ}, x_{\circ}, \gamma}, n_{s \wedge T_1})) &= \varphi(t_\circ, x_\circ, n_\circ) \\
& \! + \int_{t_\circ}^{s \wedge T_1} [\partial_t \varphi + (\partial_y \varphi) (b + \gamma) + (\partial_z \varphi) g](u, X_u^{t_{\circ}, x_{\circ}, \gamma}, n_{u}) \d u  \\
& \! + \int_{t_\circ}^{s \wedge T_1} \left[\varphi(u, \psi_{n_u}(X_u^{t_{\circ}, x_{\circ}, \gamma})) - \varphi(u, X_{u-}^{t_{\circ}, x_{\circ}, \gamma}, n_u)\right] \theta_{n_u}(X_u^{t_{\circ}, x_{\circ}, \gamma}) \d u \\
&  \! + \int_{t_\circ}^{s \wedge T_1} \left[\varphi(u, \psi_{n_u}(X_u^{t_{\circ}, x_{\circ}, \gamma})) - \varphi(u, X_{u-}^{t_{\circ}, x_{\circ}, \gamma}, n_u)\right] \d \tilde{N}_u,
\end{align*}
where $\d \tilde{N}_u = \d N_u - \theta_{n_u}(X_u^{t_{\circ}, x_{\circ}, \gamma}) \d u  = \d N_u - \E [\Lambda ~|~ \cF^N_u] g(Y_u^{t_{\circ}, y_{\circ}, \gamma}) \d u$.
Taking the expectation with respect to $\E_\P$ and using Lemma~\ref{lem:Nmartingale}, we obtain:
\[
\frac{1}{s-t_\circ} \E_\P \left[ - \int_{t_\circ}^{s \wedge T_1 } \frac{\gamma^2}{2} \d u - \int_{t_\circ}^{s \wedge T_1} (\cL^\gamma \varphi)(u, X_u^{t_{\circ}, x_{\circ}, \gamma}, n_u) \d u \right]  \leq 0, \]
where the generator $\cL^\gamma$ acting on $\varphi$ is: 
\begin{equation} \label{eq:generator_with_t_and_gamma}
	(\cL^\gamma \varphi)(t,x,n) := [\partial_t \varphi + (\partial_y \varphi) (b + \gamma) + (\partial_z\varphi) g](t, x, n) + [\varphi(t, \psi_n(x)) - \varphi(t, x, n)] \theta_n(x). 
\end{equation}
Since $u < T_1$, we have $n_u = n_\circ$, and similarly, $X_u^{t_\circ, x_\circ, \gamma}$ is a deterministic function. Choose $s = t_\circ + \frac{1}{k}$, $k \in \N^*$, sufficiently large so that $(u, X_u^{t_\circ, x_\circ, \gamma})$ remains in the open neighborhood where $\varphi$ is $C^1$, for all $u \leq s\wedge T_1$. Applying the dominated convergence theorem and then the mean value theorem, we deduce:
\[ -\frac{\gamma^2}{2} - \cL^\gamma \varphi (t_\circ, x_\circ, n_\circ) \leq 0. \]
Optimizing over $\gamma$ by choosing $\gamma = -\partial_y \varphi(t_\circ, x_\circ, n_\circ)$, we obtain the inequality required by Lemma~\ref{lem:reformulation-visc-solution}.
\end{proof}

\subsection{Supersolution property}
\label{sec:supersol-visc}
	In this section we prove:
	\begin{lemma}
		The value function $v$ is a viscosity supersolution of \eqref{eq:HJB-reformulated}.
	\end{lemma}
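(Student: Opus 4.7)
The plan is to adapt the subsolution argument of Section~\ref{sec:sub-sol-visc} by replacing the constant deterministic control with an $\varepsilon$-optimal control in the dynamic programming principle (Proposition~\ref{prop:PPD}). Fix a test function $\varphi$ and a point $(t_\circ, x_\circ, n_\circ) \in D \times \N$ with $t_\circ < T$ satisfying the conditions of Lemma~\ref{lem:reformulation-visc-solution}; in particular $v \geq \varphi$ on $D \times \N$ with equality at $(t_\circ, x_\circ, n_\circ)$. A direct computation (expand $H$ and complete the square in $\gamma$) shows that the supersolution inequality to be established is equivalent to $F(t_\circ, x_\circ, n_\circ) \leq 0$, where
\[
F(t, x, n) := \inf_{\gamma \in \R}\left\{\frac{\gamma^2}{2} + (\cL^\gamma \varphi)(t, x, n)\right\},
\]
with $\cL^\gamma$ defined in~\eqref{eq:generator_with_t_and_gamma}; the infimum is attained at $\gamma^\star = -\partial_y \varphi(t, x, n)$.

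Choose $R > 0$ small enough that the compact set $K := \{(u, x, n_\circ) : |u - t_\circ| + |x - x_\circ| \leq R\}$ lies in the open neighborhood where $\varphi$ is $C^1$. Let $T_1 := \inf\{s \geq t_\circ : N_s \neq N_{s-}\}$ and, for $\gamma \in \cA_{t_\circ}$, $\tau_R(\gamma) := \inf\{u \geq t_\circ : |X_u^{t_\circ, x_\circ, \gamma} - x_\circ| > R\}$. For $s \in (t_\circ, t_\circ + R)$ pick $\gamma^s \in \cA_{t_\circ}$ that is $(s - t_\circ)^2$-optimal in the dynamic programming principle applied at the $\cF^N$-stopping time $\tau := s \wedge T_1 \wedge \tau_R(\gamma^s)$; Proposition~\ref{prop:PPD} extends to stopping times depending on the control by a standard argument. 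Writing this in the form used in Section~\ref{sec:sub-sol-visc},
\[
\E_\P\!\left[\int_{t_\circ}^{\tau} \frac{(\gamma^s_u)^2}{2} \d u + v(\tau, X_\tau^{t_\circ, x_\circ, \gamma^s}, n_\tau)\right] \leq v(t_\circ, x_\circ, n_\circ) + (s - t_\circ)^2.
\]
Since $v \geq \varphi$ with equality at $(t_\circ, x_\circ, n_\circ)$, the left-hand side is lower-bounded by substituting $\varphi$ for $v$. On $[t_\circ, \tau]$ the trajectory $(u, X_u^{t_\circ, x_\circ, \gamma^s}, n_u)$ stays inside $K$, so It\^o's formula applies to $\varphi$; taking expectation and using Lemma~\ref{lem:Nmartingale} (so that the compensated jump martingale vanishes) yields
\[
\E_\P\!\left[\int_{t_\circ}^{\tau}\!\! \left(\frac{(\gamma^s_u)^2}{2} + (\cL^{\gamma^s_u} \varphi)(u, X_u, n_u)\right) \d u\right] \leq (s - t_\circ)^2.
\]
A pointwise lower bound of the integrand by $F(u, X_u, n_u)$ and division by $s - t_\circ$ give $\frac{1}{s - t_\circ}\,\E_\P[\int_{t_\circ}^{\tau} F(u, X_u, n_u) \d u] \leq s - t_\circ$.

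The main obstacle is passing to the limit $s \downarrow t_\circ$, since $\gamma^s$ is not a priori bounded. The global estimate $v \leq \kappa \lambda_{\max}^2/4$ together with the $(s - t_\circ)^2$-optimality gives $\E_\P \int_{t_\circ}^\tau (\gamma^s_u)^2 \d u \leq C$ uniformly in $s$; Cauchy-Schwarz then yields $\E_\P \int_{t_\circ}^{s \wedge T_1} |\gamma^s_u| \d u \leq \sqrt{C(s-t_\circ)} \to 0$, so that $\sup_{u \leq s \wedge T_1} |X_u^{t_\circ, x_\circ, \gamma^s} - x_\circ| \to 0$ in probability. Since the $\P$-intensity of $N$ is bounded by $\lambda_{\max} \|g\|_\infty$, $\P(T_1 \leq s) = O(s - t_\circ)$, and therefore $\P(\tau < s) \to 0$; consequently $(u, X_u, n_u) \to (t_\circ, x_\circ, n_\circ)$ in probability, uniformly for $u \in [t_\circ, \tau]$. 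Since $F$ is continuous at $(t_\circ, x_\circ, n_\circ)$ and bounded on $K$, dominated convergence gives
\[
\frac{1}{s - t_\circ}\, \E_\P\!\left[\int_{t_\circ}^{\tau} F(u, X_u, n_u) \d u\right] \xrightarrow[s \downarrow t_\circ]{} F(t_\circ, x_\circ, n_\circ),
\]
and the desired inequality $F(t_\circ, x_\circ, n_\circ) \leq 0$ follows, which is the supersolution condition of Lemma~\ref{lem:reformulation-visc-solution}.
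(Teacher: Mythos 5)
Your proof is correct, but it takes a genuinely different route from the paper's. The paper argues by contradiction: assuming the supersolution inequality fails at $(t_\circ,x_\circ,n_\circ)$, it propagates the strict inequality to a small ball, uses there the pointwise bound $\sup_{\gamma\in\R}\left[-\cL^\gamma\varphi-\tfrac12\gamma^2\right]<0$ (valid for \emph{every} control value, bounded or not), applies It\^o's formula up to the exit time of the ball, and contradicts Proposition~\ref{prop:PPD} through a gap $\eta>0$ obtained from a strict minimum of $v-\varphi$ on the exit set. Your argument is direct: you invoke the DPP with an $(s-t_\circ)^2$-optimal control, bound the It\^o integrand from below by $F(u,X_u,n_u)$, and let $s\downarrow t_\circ$. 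The price of the direct route is that the near-optimal controls are unbounded, so the limit must be justified quantitatively; you do this correctly via the uniform bound $\E_\P\int_{t_\circ}^{\tau}(\gamma^s_u)^2\,\d u\le C$ coming from the boundedness of $v$, plus Cauchy--Schwarz — precisely the difficulty the paper's contradiction argument sidesteps because its key inequality holds for arbitrary $\gamma$. Two details are worth tightening but are not gaps: (i) your Cauchy--Schwarz estimate is available only up to $\tau$ rather than up to $s\wedge T_1$, which is however enough, since on $\{\tau_R(\gamma^s)\le s\wedge T_1\}$ the trajectory must travel a distance $R$ before $\tau$, forcing $\int_{t_\circ}^{\tau}|\gamma^s_u|\,\d u$ to be bounded below; (ii) the extension of Proposition~\ref{prop:PPD} to the control-dependent stopping time $\tau$ is needed only in the direction asserting the existence of a near-minimizer, which follows from taking $\gamma^s$ globally $(s-t_\circ)^2$-optimal and applying Lemma~\ref{lem:pseudo-markov-ppd} together with $J^{\gamma}\ge v$ after $\tau$ — the same extension the paper itself uses implicitly with its exit time $\theta_\gamma$. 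Both proofs rely identically on Lemma~\ref{lem:Nmartingale} to eliminate the compensated jump martingale, and both reduce the conclusion to the reformulation of Lemma~\ref{lem:reformulation-visc-solution}.
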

	\begin{proof}
	We rely on Lemma~\ref{lem:reformulation-visc-solution}. Additionally, we may restrict to test functions $\varphi$ for which the minimum is strict, i.e., $(v-\varphi)(t_\circ, x_\circ, n_\circ) = 0$ and:
	\begin{equation}
		\label{eq:super1}
	(t, x, n) \neq (t_\circ, x_\circ, n_\circ) \implies (v-\varphi)(t, x, n) > 0.  
\end{equation}
	Consider $\varphi$ such a test function $\varphi$. Suppose, for contradiction, that:
\[	-\partial_t \varphi(t_\circ,x_\circ, n_\circ) + H(x_\circ, \partial_x \varphi(t_\circ, x_\circ, n_\circ)) + \theta_{n_\circ}(x_\circ) \left[ \varphi(t_\circ, x_\circ, n_\circ) - \varphi(t_\circ, \psi_{n_\circ}(x_\circ)) \right] < 0. 
\]
	\begin{enumerate}[label=\textbf{\arabic*}.]
	\item There exists $r > 0$ sufficiently small such that:
\begin{equation}  
		\label{eq:condict-super}
	-\partial_t \varphi + H(x, \partial_x \varphi) + \theta_{n}(x) \left[ \varphi- \varphi(t, \psi_{n}(x)) \right] < 0. 
\end{equation}
holds on $B(t_{\circ}, x_{\circ}, n_{\circ}; r)$ the ball centered at $(t_{\circ}, x_{\circ}, n_{\circ})$ with radius $r$. Define:
\[
	\widetilde{B}(t_{\circ}, x_{\circ}, n_{\circ}; r) := \left\{(t, \psi_n(x)) \ \text{ for } \ (t, x, n) \in B(t_{\circ}, x_{\circ}, n_{\circ}; r)\right\} \backslash B(t_{\circ}, x_{\circ}, n_{\circ}; r),
\]
 which is bounded. By (\ref{eq:super1}), there exists $\eta > 0$ such that:
	\begin{equation}\label{eq:etasuper2b}
		\eta := \min_{(\widetilde{B} \cup \partial B)(t_{\circ}, x_{\circ}, n_{\circ}; r)}(v-\varphi).
	\end{equation}
\item Let $\gamma$ be an arbitrary control, and let $\theta_\gamma$ be the first exit time of $B(t_{\circ}, x_{\circ}, n_{\circ}; r)$. By It\^o's formula:
\begin{align*}
	v(t_{\circ}, x_{\circ}, n_{\circ}) &= \varphi(t_{\circ}, x_{\circ}, n_{\circ}) \\
						      &= \varphi(\theta_\gamma, X_{\theta_\gamma}^{t_\circ, x_\circ, \gamma}, n_{\theta_\gamma}) - \int_{t_\circ}^{\theta_\gamma} \cL^{\gamma_u} \varphi(u, X_u^{t_\circ, x_\circ, \gamma}, n_u) \d u \\ 
						      & \quad - \int_{t_{\circ}}^{\theta_\gamma}\left[ \varphi(u, \psi_{n_u}(X_u^{t_\circ, x_\circ, \gamma})) - \varphi(u, X_u^{t_\circ, x_\circ, \gamma}, n_u) \right] \widetilde{N}(\d u),
\end{align*}
where $\widetilde{N}$ is the martingale process introduced in Lemma~\ref{lem:Nmartingale}. 
From \eqref{eq:condict-super} we have:
\[ \sup_{\gamma \in \R} \left[ -\cL^\gamma \varphi - \frac{1}{2} \gamma^2 \right] < 0. \]
Therefore:
\begin{align*}
	v(t_{\circ}, x_{\circ}, n_{\circ}) & \leq \mathbb{E}\left[\varphi \left(\theta_\gamma, X_{\theta_\gamma}^{t_\circ, x_\circ, \gamma}, n_{\theta_\gamma}\right) + \int_{t_\circ}^{\theta_\gamma} \frac{\gamma^2_u}{2}  \d u \right].
\end{align*}
	Using (\ref{eq:etasuper2b}), this gives:
\[
	v(t_{\circ}, x_{\circ}, n_{\circ}) \leq  -\eta + \mathbb{E}\left[v\left(\theta_\gamma, X_{\theta_\gamma}^{t_\circ, x_\circ, \gamma}, n_{\theta_\gamma}\right) + \int_{t_\circ}^{\theta_\gamma} \frac{\gamma_{u}^{2}}{2}\d u \right].
\]
\end{enumerate}
Since this inequality holds for any control $\gamma$, the latter inequality is in contradiction with Proposition~\ref{prop:PPD}.
\end{proof}

\subsection{Comparison theorem}
\label{sec:comparaison-visc}
We now establish a comparison principle for viscosity solutions of \eqref{eq:HJB-reformulated}. The main result of this section is:
\begin{proposition}
	Let $u$ be a viscosity subsolution and let $w$ be a viscosity supersolution of the PDE~\eqref{eq:HJB-reformulated} in the sense of Definition~\ref{def:viscosity-sol}. Assume that $u(T, x, n) \leq w(T, x, n)$ for all $(x, n) \in (\R \times \R_+) \times \N$. 
	Then:
	\[ u(t, x, n) \leq w(t, x, n), \quad \forall t, x, n \in D \times \N. \]
\end{proposition}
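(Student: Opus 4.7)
The plan is to argue by contradiction, following a Crandall--Ishii doubling of variables adapted to the nonlocal setting (as in \cite{MR861089}), with a time perturbation providing a strict subsolution. Suppose $\sigma := \sup_{D \times \N}(u - w) > 0$. Setting $u_\eta(t, x, n) := u(t, x, n) - \eta(T - t)$, a direct computation using the reformulation in Lemma~\ref{lem:reformulation-visc-solution} shows that $u_\eta$ is a \emph{strict} subsolution of \eqref{eq:HJB-reformulated} with the right-hand side $0$ replaced by $-\eta$; the terminal condition is preserved, and for $\eta < \sigma/T$ one still has $\sigma_\eta := \sup(u_\eta - w) > 0$, which cannot be approached as $t \to T$ (since $u_\eta - w \leq 0$ there). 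To localize on the unbounded domain $\R \times \R_+$, I would add a Tikhonov penalty $-\nu(|x|^2 + |\xi|^2)$ and pick $n_\eta \in \N$ with $\sup_{(t, x)}(u_\eta - w)(t, x, n_\eta) \geq \sigma_\eta - \nu$ (which is possible thanks to the global bound $v \leq \lambda_{\max}^2/4$).

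Next I perform the doubling in $(t, x)$ at fixed $n = n_\eta$ via
\[ \Phi_\epsilon(t, x, s, \xi) := u_\eta(t, x, n_\eta) - w(s, \xi, n_\eta) - \frac{(t-s)^2 + |x-\xi|^2}{2\epsilon} - \nu(|x|^2 + |\xi|^2). \]
For $\nu$ small enough, the supremum is attained at some $(t_\epsilon, x_\epsilon, s_\epsilon, \xi_\epsilon)$ with $t_\epsilon, s_\epsilon \to \bar t < T$, $x_\epsilon, \xi_\epsilon \to \bar x$, and standard estimates $|x_\epsilon - \xi_\epsilon|^2/\epsilon, (t_\epsilon - s_\epsilon)^2/\epsilon \to 0$. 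Applying the strict subsolution inequality for $u_\eta$ at $(t_\epsilon, x_\epsilon, n_\eta)$ and the supersolution inequality for $w$ at $(s_\epsilon, \xi_\epsilon, n_\eta)$, with the natural test functions extended in $n$ by a large penalty in $|n - n_\eta|$, and subtracting: the time-derivative parts cancel; the Hamiltonian difference $H(x_\epsilon, p_u) - H(\xi_\epsilon, p_w)$, with $p_u = (x_\epsilon - \xi_\epsilon)/\epsilon + 2\nu x_\epsilon$ and $p_w = (x_\epsilon - \xi_\epsilon)/\epsilon - 2\nu \xi_\epsilon$, is controlled to be $o(1)$ in the joint limit by the explicit quadratic form of $H$ together with the Lipschitz regularity of $b, g$ from Assumption~\ref{ass:standing-assumptions} and the doubling estimates (if needed, $\nu$ is taken to zero as a suitable function of $\epsilon$).

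The hard part will be the nonlocal zero-order contribution
\[ \theta_{n_\eta}(x_\epsilon)[u_\eta(t_\epsilon, x_\epsilon, n_\eta) - u_\eta(t_\epsilon, \psi_{n_\eta}(x_\epsilon))] - \theta_{n_\eta}(\xi_\epsilon)[w(s_\epsilon, \xi_\epsilon, n_\eta) - w(s_\epsilon, \psi_{n_\eta}(\xi_\epsilon))]. \]
The obstacle is that $\psi_n$ resets $y$ to $0$ \emph{and} increments $n$ to $n+1$: the doubling distance after the jump collapses to $|z_\epsilon - \xi_{\epsilon, 2}|$, and the comparison is effectively pushed to the new level $n_\eta + 1$, where we have no a priori maximization information. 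My plan is to rearrange the expression as
\[ \theta_{n_\eta}(x_\epsilon)\Bigl\{\bigl[u_\eta(t_\epsilon, x_\epsilon, n_\eta) - w(s_\epsilon, \xi_\epsilon, n_\eta)\bigr] - \bigl[u_\eta(t_\epsilon, \psi_{n_\eta}(x_\epsilon)) - w(s_\epsilon, \psi_{n_\eta}(\xi_\epsilon))\bigr]\Bigr\} + R_\epsilon, \]
with $R_\epsilon = [\theta_{n_\eta}(x_\epsilon) - \theta_{n_\eta}(\xi_\epsilon)][w(s_\epsilon, \xi_\epsilon, n_\eta) - w(s_\epsilon, \psi_{n_\eta}(\xi_\epsilon))] = o(1)$ by Lipschitz continuity of $\theta_{n_\eta}$ in $y$ and uniform boundedness of $w$. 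The first bracket tends to $\sigma_\eta$ by construction of the maximizer. For the second, since $\psi_{n_\eta}(x_\epsilon) = (0, z_\epsilon)$ and $\psi_{n_\eta}(\xi_\epsilon) = (0, \xi_{\epsilon, 2})$ share the $y$-coordinate with $|z_\epsilon - \xi_{\epsilon, 2}| \to 0$, the uniform continuity of $u_\eta(\cdot, \cdot, n_\eta + 1)$ and $w(\cdot, \cdot, n_\eta + 1)$ combined with the global bound $u_\eta - w \leq \sigma_\eta$ (valid at every index, including $n_\eta + 1$) yields an upper bound $\sigma_\eta + o_\epsilon(1)$. Consequently the nonlocal contribution is bounded below by $-o_\epsilon(1)$, and the subtracted viscosity inequalities collapse to $-o(1) \leq -\eta$, a contradiction once $\epsilon, \nu$ are small enough. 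The decisive point, and the reason the argument closes in spite of the $n \mapsto n+1$ shift, is that the $\nu$-slack in the choice of $n_\eta$ lets us control the $n_\eta+1$ contribution by the \emph{global} supremum $\sigma_\eta$, sidestepping any induction on $n \in \N$, which would otherwise be blocked by the infinite range of the index.
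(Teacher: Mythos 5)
Your argument is correct and, at its core, takes the same route as the paper's proof: doubling of variables, a linear-in-time perturbation to force strictness, a coercive penalty to localize on the unbounded domain, and --- the decisive point, which you identify accurately --- control of the post-jump, level-$(n+1)$ bracket by the \emph{global} supremum of $u-w$, while the pre-jump bracket at the doubled maximizer nearly attains that supremum, so that the nonlocal term contributes only a vanishing error. Two differences are worth recording. First, where you fix a near-optimal index $n_\eta$ in advance and invoke the global bound $u_\eta - w \le \sigma_\eta$ at level $n_\eta+1$, the paper maximizes jointly over $n$ with a penalty $-\alpha n$ and uses the inequality $\Phi_{\bar{n}}(\bar{t},\bar{x},\bar{t}',\bar{x}') \ge \Phi_{\bar{n}+1}(\bar{t},(0,\bar{x}_2),\bar{t}',(0,\bar{x}'_2))$ at the reset point; the two devices are interchangeable, both resting on the facts that the reset $y \mapsto 0$ does not increase the localization penalty and that the index shift costs only the slack ($\nu$ for you, $\alpha$ in the paper). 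Second, the paper localizes with $\delta\log(1+\norm{x}^2)$, whose derivative is globally bounded, so the cross term in the Hamiltonian difference is $O(\delta/\sqrt{\epsilon})$ and is killed by sending $\delta \to 0$ \emph{first}, for fixed $\epsilon$ and $\alpha$. Your quadratic penalty $\nu\norm{x}^2$ instead produces a term of order $\nu\norm{x_\epsilon}\cdot\norm{x_\epsilon-\xi_\epsilon}/\epsilon \le C\sqrt{\nu/\epsilon}$, which forces $\nu$ to vanish before (or much faster than) $\epsilon$; then $n_\eta$ and the maximizers depend on $\nu$, and the moduli of continuity at level $n_\eta+1$ are no longer fixed along the limit. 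This is precisely the point you defer with ``$\nu$ is taken to zero as a suitable function of $\epsilon$'', and it is the one place where your sketch must be pinned down carefully --- the paper's sublinear penalty is chosen exactly to sidestep it.
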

In particular, there exists at most one bounded and uniformly continuous viscosity solution of \eqref{eq:HJB-reformulated} that satisfies the terminal condition $v(T, x, n) = \kappa\Psi_n(x)$.
\begin{proof}
	Suppose, for contradiction, $\theta := \sup_{D \times \N } u-w > 0$.
	\begin{enumerate}[label=\textbf{\arabic*}.]
		\item 	Let $\epsilon, \delta, \beta, \alpha > 0$. Consider the auxiliary function:
			\begin{align*} 
				\Phi_n(t, x, t', x') &= u(t, x, n) - w(t', x', n) - \frac{1}{2 \epsilon} \norm{x-x'}^2 - \frac{1}{2 \epsilon} |t-t'|^2 + \beta (t' - T) \\
		 & \quad -  \delta F(\norm{x}) - \delta F(\norm{x'}) - \alpha n, \end{align*}
			where $\norm{x} := \sqrt{y^2 + z^2}$ for $x = (y, z)$ and:
\[ F(r) := \log (1+r^2), \quad r \geq 0. \]
For $\epsilon, \delta, \beta, \alpha$ sufficiently small, it holds that:
\begin{equation}
	\label{eq:contradic}
	\sup_{n \in \N} \sup_{ D^2} \Phi_n > \theta/2. 
\end{equation}
Since $u$ and $w$ are bounded and $\delta > 0, \alpha > 0$, the supremum is attained at some point $\bar{t}, \bar{x}, \bar{t}', \bar{x}', \bar{n}$.
\item From $\Phi_{\bar{n}}(\bar{t}, \bar{x}, \bar{t}', \bar{x}') \geq 0$ and the boundedness of $u$ and $w$, with $C = \norm{u}_\infty + \norm{w}_\infty$, we have:
	\[  \frac{1}{2 \epsilon} \norm{\bar{x} - \bar{x}'}^2 + \frac{1}{2 \epsilon} |\bar{t} - \bar{t}'|^2 + \delta F(\norm{\bar{x}}) + \delta F(\norm{\bar{x}'}) + \alpha \bar{n} \leq C.   \]
	From $\Phi_{\bar{n}}(\bar{t}, \bar{x}, \bar{t'}, \bar{x'}) \geq \Phi_{\bar{n}}(\bar{t'}, \bar{x'}, \bar{t'}, \bar{x'})$, it follows:
	\[ u(\bar{t}, \bar{x}, \bar{n}) - u(\bar{t'}, \bar{x'}, \bar{n}) \geq \frac{1}{2\epsilon} \norm{\bar{x} - \bar{x'}}^2 + \frac{1}{2 \epsilon} |\bar{t}-\bar{t'}|^2 + \delta F(\norm{\bar{x}}_2) - \delta F(\norm{\bar{x'}}_2). \]
For $\rho \geq 0$, define:
$D_\rho = \{ (t, x), (t', x') \in D^2:  |t-t'|^2 + \norm{x-x'}_2^2 \leq \rho\}$, 
and:
\[ m^\alpha_u(\rho) = 2 \sup\{ |u(t, x, n) - u(t', x', n)|, \quad (t, x), (t', x') \in D_\rho, n \in \N, n \leq C/\alpha \}. \]
Since $\norm{\bar{x}-\bar{x'}}^2 + |\bar{t}-\bar{t'}|^2 \leq 2C \epsilon$ and $\abs{\norm{\bar{x}} - \norm{\bar{x'}}} \leq \norm{\bar{x} - \bar{x'}}$, we deduce: 
\[  \frac{1}{\epsilon} \norm{\bar{x}-\bar{x'}}^2 + \frac{1}{\epsilon} |\bar{t} - \bar{t'}|^2 \leq m^\alpha_u(4C \epsilon) + \delta \norm{F'}_\infty \sqrt{8 C \epsilon}.  \]
As $(t, x) \mapsto u(t, x, n)$ is uniformly continuous and bounded, $m^\alpha_u: \R_+ \rightarrow \R_+$ is bounded, continuous, and $m^\alpha_u(0+) = m^\alpha_u(0) = 0$. A similar statement holds for $m^\alpha_w$. 
\item If $\bar{t} = T$, then:
	\begin{align*}
		\Phi_{\bar{n}}(\bar{t}, \bar{x}, \bar{t'}, \bar{x'}) & \leq u(\bar{t}, \bar{x}, \bar{n}) - w(\bar{t'}, \bar{x'}, \bar{n}) \\
								   & \leq w(\bar{t}, \bar{x}, \bar{n}) - w(\bar{t'}, \bar{x'}, \bar{n}) \\
		& \leq \frac{1}{2} m^\alpha_w(2 C \epsilon). 
	\end{align*}
\item Similarly, if $\bar{t'} = T$, then:
	\[ \Phi_{\bar{n}}(\bar{t}, \bar{x}, \bar{t'}, \bar{x'}) \leq \frac{1}{2} m^\alpha_u(2 C \epsilon). \] 
\item If $\bar{t} < T$ and $\bar{t'} < T$, consider: 
\[
\varphi_{u}(t, x) := \frac{1}{2 \epsilon} \norm{x-\bar{x'}}^2 + \frac{1}{2 \epsilon} |t-\bar{t'}|^2 + \delta F(\norm{x}).
\]
Since $(\bar{t}, \bar{x}) \in \text{argmax} (u- \varphi_{u})$, the viscosity subsolution property of $u$ implies:
	\[ - \frac{\bar{t} - \bar{t'}}{\epsilon} + H\left(\bar{x}, \frac{\bar{x} - \bar{x'}}{\epsilon} + \delta F'(\norm{\bar{x}}) \frac{\bar{x}}{\norm{\bar{x}}} \right) + \theta_{\bar{n}}(\bar{x}) [u(\bar{t}, \bar{x}, \bar{n}) - u(\bar{t}, \psi_{\bar{n}}(\bar{x}))]  \leq 0. \]
	Similarly, consider:
 \[
\varphi_{v}(t', x') = -\frac{1}{2\epsilon} \norm{\bar{x} - x'}^2 - \frac{1}{2\epsilon} |\bar{t}-t'|^2 - \delta F(\norm{x'}) + \beta(t' - T).
\]
Since $(\bar{t'}, \bar{x'}) \in \text{argmin}(w-\varphi_{v})$, the viscosity supersolution property of implies:
	\[ -\beta - \frac{\bar{t} - \bar{t'}}{\epsilon} + H\left( \bar{x'}, \frac{\bar{x} - \bar{x'}}{\epsilon} - \delta F'(\norm{\bar{x'}}) \frac{\bar{x'}}{\norm{\bar{x'}}} \right)  + \theta_{\bar{n}}(\bar{x'}) [w(\bar{t'}, \bar{x'}, \bar{n}) - w(\bar{t'}, \psi_{\bar{n}}(\bar{x'}))] \geq 0. \] 
Combining these inequalities, we obtain $\beta \leq \Delta_1 + \Delta_2+\Delta_3$ where: 
\begin{align*} \Delta_1 & :=  H\left( \bar{x'}, \frac{\bar{x} - \bar{x'}}{\epsilon} - \delta F'(\norm{\bar{x'}}) \frac{\bar{x'}}{\norm{\bar{x'}}}\right) -   H \left(\bar{x}, \frac{\bar{x} - \bar{x'}}{\epsilon} + \delta F'(\norm{\bar{x}}) \frac{\bar{x}}{\norm{\bar{x}}} \right) \\
		\Delta_2 & :=  [\theta_{\bar{n}}(\bar{x'}) - \theta_{\bar{n}}(\bar{x})] [w(\bar{t'}, \bar{x'}, \bar{n}) - w(\bar{t'}, \psi_{\bar{n}}(\bar{x'}))] \\
\Delta_3 & := \theta_{\bar{n}}(\bar{x})  [w(\bar{t'}, \bar{x'}, \bar{n}) - w(\bar{t'}, \psi_{\bar{n}}(\bar{x'}))  - u(\bar{t}, \bar{x}, \bar{n}) + u(\bar{t}, \psi_{\bar{n}}(\bar{x}))]. \end{align*}
For $\Delta_1$ using $H(x, p) = \frac{1}{2} p_1^2 - B(x) \cdot p$, we have:
	\begin{align*} \Delta_1 &= \underbrace{- \frac{\delta}{2} \left[ F'(\norm{\bar{x}'}) \frac{\bar{x}'}{\norm{\bar{x}'}} + F'(\norm{\bar{x}}) \frac{\bar{x}}{\norm{\bar{x}}} \right]_1 \left[ 2 \frac{\bar{x}-\bar{x}'}{\epsilon} + \delta F'(\norm{\bar{x}}) \frac{\bar{x}}{\norm{\bar{x}}} - \delta F'(\norm{\bar{x}'}) \frac{\bar{x}'}{\norm{\bar{x}'}} \right]_1}_{A_1}  \\
	& \quad + \underbrace{\left[ B(\bar{x}) - B(\bar{x'})\right] \cdot \frac{\bar{x}-\bar{x}'}{\epsilon} }_{A_2}  + \underbrace{\delta F'(\norm{\bar{x}}) \cdot \frac{B(\bar{x}) \cdot \bar{x}}{\norm{\bar{x}}} + \delta F'(\norm{\bar{x}'}) \frac{B(\bar{x}') \cdot \bar{x}' }{ \norm{\bar{x}'}} }_{A_3}.  
	\end{align*}
We used here the notation $[(y,z)]_1 = y$. We have:
\[ |A_1| \leq 2 \delta \left( \frac{\sqrt{2C}}{\sqrt{\epsilon}} + \delta\right), \quad |A_2| \leq \norm{\nabla B}_\infty  \left[ m^\alpha_u(4C \epsilon) + \delta \sqrt{8C \epsilon}\right]. \] 
In addition, there is a constant $\widetilde{C}$ such that $\abs{B(x) \cdot x} \leq \widetilde{C} (\norm{x}^2 + \norm{x})$. 
Using that  $\sup_{r \in \R_+} \abs{ (1+r) F'(r)} = 1+\sqrt{2}$, we deduce that:
\[ |A_3| \leq (1+\sqrt{2}) \widetilde{C} \delta. \] 
Denote by
\[ k(\alpha, \epsilon, \delta) :=  2 \delta \left( \frac{\sqrt{2C}}{\sqrt{\epsilon}} + \delta\right)  +  \norm{\nabla B}_\infty  \left[ m^\alpha_u(4C \epsilon) + \delta \sqrt{8C \epsilon}\right] + (1+\sqrt{2}) \widetilde{C} \delta, \]
	we retain that $\Delta_1 \leq k(\alpha, \epsilon, \delta)$.
	For $\Delta_2$, since $g \in C^{1}$ and by Lemma~\ref{lemme:reg_post}, $L := \sup_{n \in \N} \norm{\nabla \theta_n}_\infty < +\infty$, so:
	\[
|\Delta_2| \leq 2 \norm{w}_\infty L  \sqrt{2C \epsilon}.
\]
	For $\Delta_3$, let $\bar{x}^* = (0, \bar{x}_2)$ and $\bar{x}'^* = (0, \bar{x}'_2)$. Since: 
	\[  \Phi_{\bar{n}}(\bar{t}, \bar{x}, \bar{t}', \bar{x}') \geq \Phi_{\bar{n}+1}(\bar{t}, \bar{x}^*, \bar{t}', \bar{x}'^*), \]
	we deduce:
	\begin{align*} w(\bar{t'}, \bar{x'}, \bar{n}) - w(\bar{t'}, \psi_{\bar{n}}(\bar{x}'))  - u(\bar{t}, \bar{x}, \bar{n}) + u(\bar{t}, \psi_{\bar{n}}(\bar{x})) &\leq - \frac{1}{2\epsilon} \norm{\bar{x} - \bar{x}'}^2 + \frac{1}{2\epsilon} |\bar{x}_2 - \bar{x}'_2|^2  \\
	& \quad + \delta [F(|\bar{x}_2|) - F(\norm{\bar{x}})]\\
	& \quad +  \delta [F(|\bar{x}'_2|) - F(\norm{\bar{x}'})]  \\
	& \quad + \alpha ((n+1) - n)  \\
	& \leq \alpha,
	\end{align*}
	using that $F$ is non-decreasing and that $|\bar{x}_2| = \norm{\bar{x}^*} \leq \norm{\bar{x}}$, 
	since the $y$-coordinate is reset to zero after a jump.
\item For $\tilde{k}(\alpha, \epsilon, \delta) := k(\alpha, \epsilon, \delta) + 2 \norm{w}_\infty L \sqrt{2 C \epsilon} + \alpha$, we have $\beta \leq \tilde{k}(\alpha, \epsilon, \delta)$. Since:
	\[ \limsup_{\alpha \downarrow 0} \lim\sup_{\epsilon \downarrow 0} \limsup_{\delta \downarrow 0} \tilde{k}(\epsilon, \delta) = 0, \]
	there exists $\alpha(\beta) > 0$,  $\epsilon(\beta, \alpha) > 0$ and $\delta(\beta, \alpha, \epsilon)$ such that:
	\[ \forall \alpha \in (0, \alpha(\beta)), \forall \epsilon \in (0, \epsilon(\beta, \alpha)), \forall \delta \in (0, \delta(\beta, \alpha, \epsilon)), \quad \tilde{k}(\alpha, \epsilon, \delta) < \beta. \]
	This choice is in contradiction with $\beta \leq \tilde{k}(\alpha, \epsilon, \delta)$. Thus, either $\bar{t} = T$ or $\bar{t'} = T$, and:
	\[ \Phi_{\bar{n}}(\bar{t}, \bar{x}, \bar{t'}, \bar{x'}) \leq \frac{1}{2} (m^\alpha_w + m^\alpha_u)(2C \epsilon). \]
	For $\epsilon$ sufficiently small, this contradicts \eqref{eq:contradic}, completing the proof.
\end{enumerate}
\end{proof}
\section{Examples}
\label{sec:examples}
To illustrate the results numerically, we consider the drift $b(y) = -y$. The controlled process is:
\[	Y_t = y + \int_0^t (-Y_u + \gamma_u) \d u - \int_0^t Y_{u-} \d N_u, \quad t \in [0, T], y \in \R.
\]
We present two examples using different prior distributions and intensity functions. 
\subsection{First example}
We first consider an intensity function of the form:
\[
f_{\lambda}(y) := \lambda \exp(2(y-1)),
\]
and a prior distribution:
\[
	\mu := \sum_{i=1}^{k}p_{i}\delta_{\lambda_i},
\]
where $\sum_{i=1}^{k}p_i = 1$ and $k \geq 2$. To satisfy Assumption~\ref{ass:standing-assumptions}, the function $f_{\lambda}$ can be artificially bounded by a constant $C > 0$, which does not significantly affect the results. Notably, this family of priors is conjugate: the posterior distribution retains the same form with updated weights $(p_1, \ldots, p_k)$. Within our framework, the dimensionality of the problem is reduced to two, as opposed to the original $k-1$.

We solve the PDE \eqref{eq:HJB-reformulated} numerically using a standard explicit scheme. The convergence of this scheme to the unique viscosity solution can be analyzed following \cite[Thm. 2.1]{barles1991convergence}.

\begin{figure}[!ht]
\centering
\includegraphics[scale=0.65]{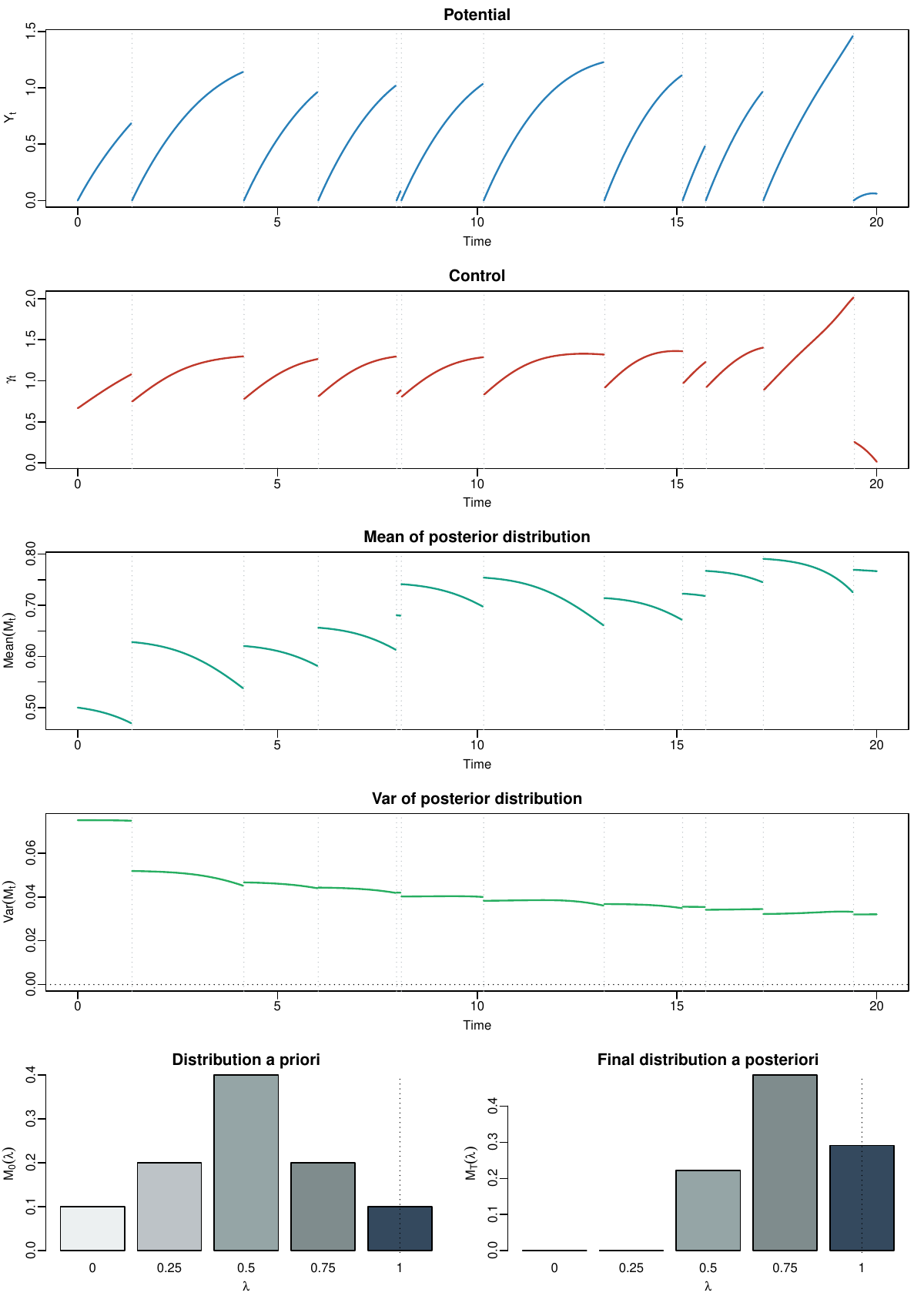}
\caption{An optimal trajectory with intensity $\lambda \exp(2(y-1))$ and prior distribution $\frac{1}{10}\left(\delta_{0} + 2\delta_{0.25} + 4\delta_{0.5} + 2\delta_{0.75} + \delta_{1}\right)$. The true value is $\lambda = 1$.}\label{x2_dirac}
\end{figure}

In Figure~\ref{x2_dirac}, we present a simulated path of the optimal strategy, where the initial measure is $m_{0} := \frac{1}{10}\left(\delta_{0} + 2\delta_{0.25} + 4\delta_{0.5} + 2\delta_{0.75} + \delta_{1}\right)$ and the true parameter value is $\Lambda = 1$. The top plot shows the evolution of the potential over time. The second plot depicts the optimal control, while the third and fourth plots display the mean and variance of the posterior distribution, respectively. The bottom-left plot shows $m_0$ as a histogram, and the bottom-right plot presents $m_T$.

Throughout the trajectory, the controller applies a control $t \mapsto \gamma_{t}$ that remains close to 1. Near the terminal time, after the final jump, the control decreases, as further increasing the potential is not cost-effective.

Although the prior is centered at $\lambda = 0.5$ with low probability assigned to $\lambda = 1$, the mean of the posterior distribution increases on average. By the end, the posterior assigns the highest probabilities to $\lambda = 0.75$ and $\lambda = 1$, excluding $\lambda = 0$ and nearly ruling out $\lambda = 0.25$. Notably, the posterior probability of $\lambda = 0$ becomes zero immediately after observing a jump.

\subsection{Second example}

For the second example, we consider the intensity function:
\[
	f_{\lambda}(y) := \lambda \left[\frac{1}{1 + e^{-100(y-1)}}\right].
\]
This function approximates the discontinuous function $(\lambda, y) \mapsto \lambda\mathbf{1}_{\{y \geq 1\}}$. For the discontinuous case, when $y < 1$, the controller decides whether to increase the potential to 1. Once the potential reaches 1, it remains fixed. The continuous version should exhibit similar behavior. We choose the uniform prior distribution on $[0, 2]$:
\[
	\mu := \mathcal{U}\left([0, 2]\right). \]
\begin{figure}[!ht]
\centering
\includegraphics[scale=0.65]{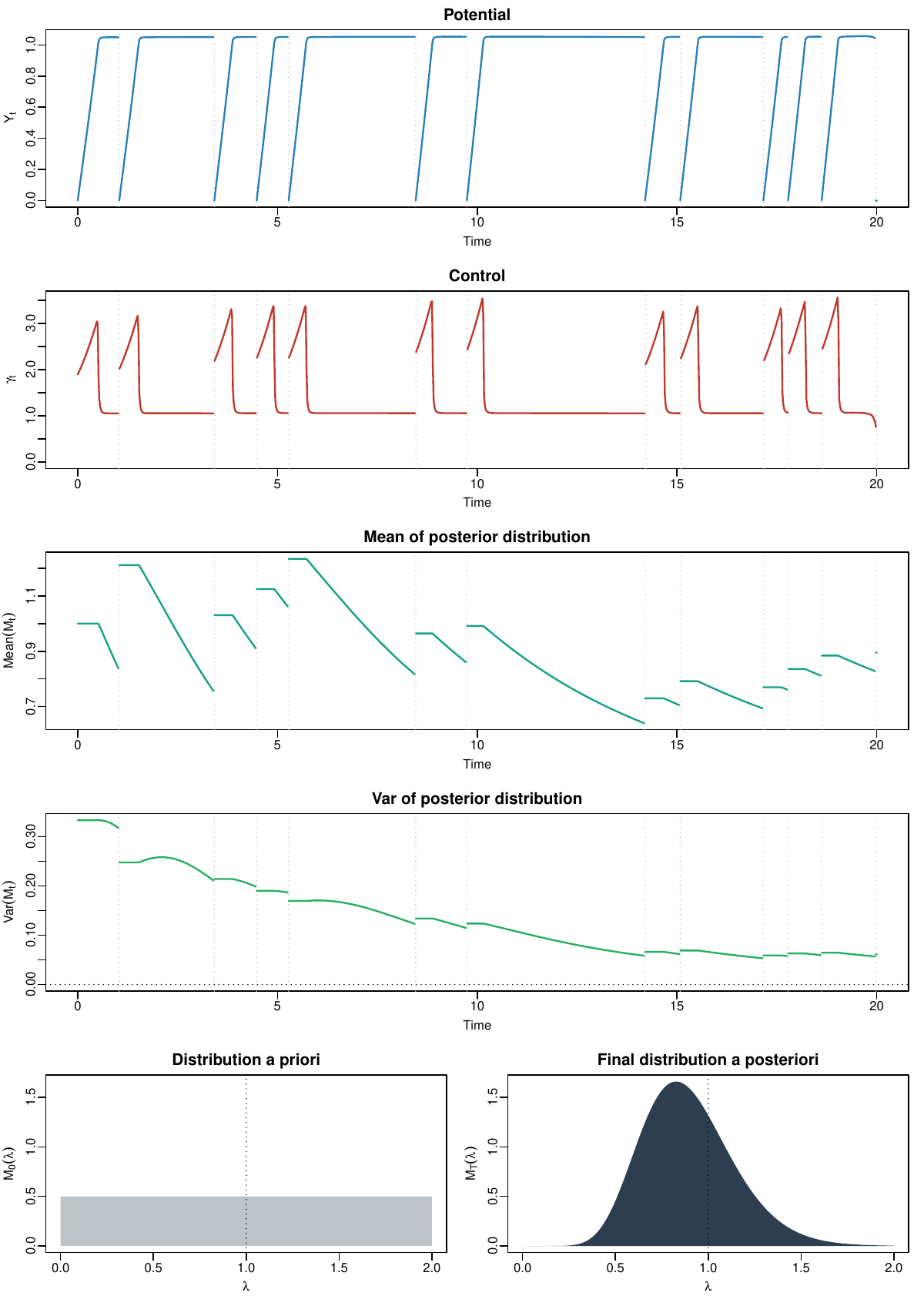}
\caption{An optimal trajectory with intensity approximating $\lambda \mathbf{1}_{\{y \geq 1\}}$ and prior $\mathcal{U}\left([0, 2]\right)$. The true value is $\lambda = 1$.}
\label{ind_unif}
\end{figure}

In Figure~\ref{ind_unif}, we present a simulated path of the optimal strategy with $\Lambda = 1$. The plots illustrate the same quantities as in Figure~\ref{x2_dirac}, except that the bottom-left and bottom-right panels display $m_0$ and $m_T$ as density functions, respectively.

Throughout the trajectory, the controller applies a control $t \mapsto \gamma_{t}$ that increases until the potential reaches slightly above $Y_t = 1$ at which point the control jumps to approximately $\gamma_t = 1$. Since the drift of the potential is $-Y_{t} + \gamma_{t}$, setting $\gamma_{t} = 1$ when the potential reaches 1 is optimal, as exceeding this value is unnecessary. Near the terminal time, after the final jump, the control decreases, as further increases in potential are not cost-effective.

The prior is initially centered at the true value. While the potential remains below 1, the posterior distribution is nearly unchanged, as no information about $\Lambda$ is gained. Once the potential reaches 1, the mean and variance decrease linearly. Upon observing a jump, the mean increases, while the variance's behavior depends on the observation specifics but generally decreases over time. Although the prior is symmetric around $\lambda = 1$, a jump excludes $\lambda = 0$, resulting in zero density at that point. The density at the upper boundary, $\lambda = 2$, remains positive but close to zero.

\FloatBarrier

\section*{Acknowledgments}

Nicolas Baradel acknowledges the financial support from the \emph{Fondation Natixis}. 
Quentin Cormier thanks Carl Graham for insightful discussions at various stages of this project, as well as Charles Bertucci and Mete Soner for their help with viscosity solutions.
\bibliographystyle{abbrvnat}
\bibliography{biblio}

\begin{thebibliography}{20}
\providecommand{\natexlab}[1]{#1}
\providecommand{\url}[1]{\texttt{#1}}
\expandafter\ifx\csname urlstyle\endcsname\relax
  \providecommand{\doi}[1]{doi: #1}\else
  \providecommand{\doi}{doi: \begingroup \urlstyle{rm}\Url}\fi

\bibitem[Baradel(2024)]{baradel2024optimal}
N.~Baradel.
\newblock Optimal control under uncertainty: application to the issue of {${\rm CAT}$} bonds.
\newblock \emph{Insurance Math. Econom.}, 117:\penalty0 16--44, 2024.
\newblock ISSN 0167-6687,1873-5959.
\newblock \doi{10.1016/j.insmatheco.2024.03.004}.
\newblock URL \url{https://doi.org/10.1016/j.insmatheco.2024.03.004}.

\bibitem[Baradel et~al.(2018)Baradel, Bouchard, and Dang]{baradel2018optimal}
N.~Baradel, B.~Bouchard, and N.~M. Dang.
\newblock Optimal control under uncertainty and {B}ayesian parameters adjustments.
\newblock \emph{SIAM J. Control Optim.}, 56\penalty0 (2):\penalty0 1038--1057, 2018.
\newblock ISSN 0363-0129,1095-7138.
\newblock \doi{10.1137/16M1070815}.
\newblock URL \url{https://doi.org/10.1137/16M1070815}.

\bibitem[Barles and Souganidis(1991)]{barles1991convergence}
G.~Barles and P.~E. Souganidis.
\newblock Convergence of approximation schemes for fully nonlinear second order equations.
\newblock \emph{Asymptotic Anal.}, 4\penalty0 (3):\penalty0 271--283, 1991.
\newblock ISSN 0921-7134.

\bibitem[Bismuth et~al.(2019)Bismuth, Gu\'eant, and Pu]{bismuth2019portfolio}
A.~Bismuth, O.~Gu\'eant, and J.~Pu.
\newblock Portfolio choice, portfolio liquidation, and portfolio transition under drift uncertainty.
\newblock \emph{Math. Financ. Econ.}, 13\penalty0 (4):\penalty0 661--719, 2019.
\newblock ISSN 1862-9679,1862-9660.
\newblock \doi{10.1007/s11579-019-00241-1}.
\newblock URL \url{https://doi.org/10.1007/s11579-019-00241-1}.

\bibitem[Bouchard and Touzi(2011)]{BouchardTouzi}
B.~Bouchard and N.~Touzi.
\newblock Weak dynamic programming principle for viscosity solutions.
\newblock \emph{SIAM J. Control Optim.}, 49\penalty0 (3):\penalty0 948--962, 2011.
\newblock ISSN 0363-0129.
\newblock \doi{10.1137/090752328}.

\bibitem[Bremaud(1981)]{BP}
P.~Bremaud.
\newblock \emph{Point processes and queues. {Martingale} dynamics}.
\newblock Springer Ser. Stat. Springer, New York, NY, 1981.

\bibitem[Br{\'e}maud and Massouli{\'e}(1996)]{zbMATH00971853}
P.~Br{\'e}maud and L.~Massouli{\'e}.
\newblock Stability of nonlinear {Hawkes} processes.
\newblock \emph{Ann. Probab.}, 24\penalty0 (3):\penalty0 1563--1588, 1996.
\newblock ISSN 0091-1798.
\newblock \doi{10.1214/aop/1065725193}.

\bibitem[Cormier et~al.(2020)Cormier, Tanr{\'e}, and Veltz]{CTV}
Q.~Cormier, E.~Tanr{\'e}, and R.~Veltz.
\newblock Long time behavior of a mean-field model of interacting neurons.
\newblock \emph{Stochastic Processes Appl.}, 130\penalty0 (5):\penalty0 2553--2595, 2020.
\newblock ISSN 0304-4149.
\newblock \doi{10.1016/j.spa.2019.07.010}.

\bibitem[Crisan(2014)]{crisan}
D.~Crisan.
\newblock The stochastic filtering problem: A brief historical account.
\newblock \emph{Journal of Applied Probability}, 51A:\penalty0 13--22, 2014.
\newblock ISSN 00219002.
\newblock URL \url{http://www.jstor.org/stable/43284105}.

\bibitem[De~Masi et~al.(2015)De~Masi, Galves, L{\"o}cherbach, and Presutti]{eva1}
A.~De~Masi, A.~Galves, E.~L{\"o}cherbach, and E.~Presutti.
\newblock Hydrodynamic limit for interacting neurons.
\newblock \emph{J. Stat. Phys.}, 158\penalty0 (4):\penalty0 866--902, 2015.
\newblock ISSN 0022-4715.
\newblock \doi{10.1007/s10955-014-1145-1}.

\bibitem[Dumitrescu et~al.(2016)Dumitrescu, Quenez, and Sulem]{zbMATH06624241}
R.~Dumitrescu, M.-C. Quenez, and A.~Sulem.
\newblock A weak dynamic programming principle for combined optimal stopping/stochastic control with {{${\mathcal{E}}^{f}$}}-expectations.
\newblock \emph{SIAM J. Control Optim.}, 54\penalty0 (4):\penalty0 2090--2115, 2016.
\newblock ISSN 0363-0129.
\newblock \doi{10.1137/15M1027012}.

\bibitem[Easley and Kiefer(1988)]{easley1988controlling}
D.~Easley and N.~M. Kiefer.
\newblock Controlling a stochastic process with unknown parameters.
\newblock \emph{Econometrica}, 56\penalty0 (5):\penalty0 1045--1064, 1988.
\newblock ISSN 0012-9682,1468-0262.
\newblock \doi{10.2307/1911358}.
\newblock URL \url{https://doi.org/10.2307/1911358}.

\bibitem[El~Karoui et~al.(2012)El~Karoui, Pardoux, and Yor]{MR3075395}
N.~El~Karoui, E.~Pardoux, and M.~Yor.
\newblock \emph{Stochastic filtering at {S}aint-{F}lour}.
\newblock Probability at Saint-Flour. Springer, Heidelberg, 2012.
\newblock ISBN 978-3-642-25429-1.

\bibitem[Fournier and L{\"o}cherbach(2016)]{eva2}
N.~Fournier and E.~L{\"o}cherbach.
\newblock On a toy model of interacting neurons.
\newblock \emph{Ann. Inst. Henri Poincar{\'e}, Probab. Stat.}, 52\penalty0 (4):\penalty0 1844--1876, 2016.
\newblock ISSN 0246-0203.
\newblock \doi{10.1214/15-AIHP701}.

\bibitem[Hern\'andez-Lerma(1989)]{hernandez2012adaptive}
O.~Hern\'andez-Lerma.
\newblock \emph{Adaptive {M}arkov control processes}, volume~79 of \emph{Applied Mathematical Sciences}.
\newblock Springer-Verlag, New York, 1989.
\newblock ISBN 0-387-96966-7.
\newblock \doi{10.1007/978-1-4419-8714-3}.
\newblock URL \url{https://doi.org/10.1007/978-1-4419-8714-3}.

\bibitem[Ikeda and Watanabe(1981)]{MR637061}
N.~Ikeda and S.~Watanabe.
\newblock \emph{Stochastic differential equations and diffusion processes}, volume~24 of \emph{North-Holland Mathematical Library}.
\newblock North-Holland Publishing Co., Amsterdam-New York; Kodansha, Ltd., Tokyo, 1981.
\newblock ISBN 0-444-86172-6.

\bibitem[Jacod(1979)]{zbMATH03644252}
J.~Jacod.
\newblock \emph{Calcul stochastique et probl{\`e}mes de martingales}, volume 714 of \emph{Lect. Notes Math.}
\newblock Springer, Cham, 1979.
\newblock \doi{10.1007/BFb0064907}.

\bibitem[Kallenberg(1997)]{Kallenberg}
O.~Kallenberg.
\newblock \emph{Foundations of modern probability}.
\newblock Probab. Appl. New York, NY: Springer, 1997.
\newblock ISBN 0-387-94957-7.
\newblock \doi{10.1007/b98838}.

\bibitem[Lions(2016)]{lions2016}
P.-L. Lions.
\newblock {\'E}quations de {HJB} et extensions de la th{\'e}orie classique du contr{\^o}le stochastique, 2016.
\newblock Cours du Coll{\`e}ge de France.

\bibitem[Soner(1986)]{MR861089}
H.~M. Soner.
\newblock Optimal control with state-space constraint. {II}.
\newblock \emph{SIAM J. Control Optim.}, 24\penalty0 (6):\penalty0 1110--1122, 1986.
\newblock ISSN 0363-0129.
\newblock \doi{10.1137/0324067}.
\newblock URL \url{https://doi.org/10.1137/0324067}.

\end{thebibliography}
\end{document}